\documentclass[11pt,reqno]{amsart}

\newcommand{\la}{\langle}
\newcommand{\ra}{\rangle}

\renewcommand{\Im}{\operatorname{Im}}

\usepackage{amssymb}
\usepackage{amsthm}
\usepackage{amsxtra}
\usepackage{caption}
\usepackage{fancyhdr}
\usepackage{graphicx}
\usepackage{color}
\usepackage{amsmath}
\usepackage{listings} 
\usepackage{float}
\usepackage{cases} 
\usepackage{threeparttable}
\usepackage{dcolumn}
\usepackage{multirow}
\usepackage{booktabs}
\usepackage{ulem}

\newcommand{\R}{\mathbb R}

\newcommand{\N}{\mathbb N}

\newtheorem{theorem}{Theorem}[section]

\newtheorem{lemma}[theorem]{Lemma}
\newtheorem{corollary}[theorem]{Corollary}

\newtheorem{conjecture}[theorem]{Conjecture}

\theoremstyle{remark}
\newtheorem{remark}[theorem]{Remark}

\numberwithin{equation}{section}

\usepackage{comment}
\title[1D combined NLS]{The Nonlinear Schr\"odinger equation with \\ combined nonlinearities in 1D}
\date{}

\author[O. Ria\~{n}o]{Oscar Ria\~{n}o }
\address{Universidad Nacional de Colombia, Bogot\'a, Colombia}
\curraddr{}
\email{ogrianoc@unal.edu.co}

\author[A. D. Rodriguez]{Alex D. Rodriguez}
\address{Department of Mathematics  \& Statistics\\Florida International University,  Miami, FL, USA}
\curraddr{}
\email{arodr1128@fiu.edu}

\author[S. Roudenko]{Svetlana Roudenko}
\address{Department of Mathematics \& Statistics\\Florida International University,  Miami, FL, USA}
\curraddr{}
\email{sroudenko@fiu.edu}

\subjclass{35A01, 35B40, 35Q40, 35Q55}

\keywords{nonlinear Schr\"odinger equation, well-posedness, weighted spaces,
low power nonlinearity, combined nonlinearities, exponential nonlinearity}

\begin{document}

\begin{abstract}
We consider the one-dimensional nonlinear Schr\"{o}dinger equation 
$$
iu_t + u_{xx} + \mathcal{N}(u)u=0, \quad x,t \in \mathbb R,
$$ 
with the nonlinearity term that is expressed as a sum of powers, possibly infinite: 
$$
\mathcal{N}(u) = \sum d_k |u|^{\alpha_k}, \quad \alpha_k > 0.
$$

We first investigate the local well-posedness of this equation for any positive powers of $\alpha_k$ in a certain weighted class of initial data, subset of $H^1 (\mathbb R)$. For that we use an approach of Cazenave-Naumkin \cite{CazNaum2016}, thus, avoiding any Strichartz estimates. Then, using the pseudo-conformal transformation, we extend the local result to the global one for the initial data with a quadratic phase. 
Furthermore, we investigate the asymptotic behavior of such global solutions and prove scattering for data with the quadratic phase $e^{ib|x|^2}$ with sufficiently large positive $b$, in $H^1(\mathbb R)$. One of the advantages of considering an infinite sum in the nonlinearity term is being able to consider exponential nonlinearities, such as $e^{\gamma |u|^{k}} u$, as well as sine or cosine nonlinearities, and obtain well-posedness in those cases, the first such result for most of those nonlinearities. 

To conclude, we show numerical simulations for various examples of combined nonlinearities, including the double nonlinearity and an exponential one, then investigate the behavior of solutions with positive or negative initial $b$ in a quadratic phase data. Furthermore, we also show that a ground state in the NLS equation with combined nonlinearities no longer provides a sharp threshold for global behavior such as scattering vs. finite time blow-up, instead the equation has a much richer dynamics. 
\end{abstract}

\maketitle

\tableofcontents

\section{Introduction}
We consider the Cauchy problem associated to the $1d$ nonlinear Schr\"odinger equation  
\begin{equation}\label{NLS}
	\left\{\begin{aligned}
		&i\partial_t u + \partial_{x}^{2}u + \mathcal{N}(u)\,u = 0, \quad x\in \mathbb{R},\, \, \, t\in \mathbb{R}, \\
		&u(x,0) = u_{0}(x), 
	\end{aligned}    \right.
\end{equation}
where the potential $\mathcal N$ could be viewed as a combination of power nonlinearities
\begin{equation}\label{nonlinearterm}
	\mathcal{N}(u)=\sum
 d_k |u|^{\alpha_k}, \quad \alpha_k > 0, ~ d_k \in \mathbb C.
\end{equation}

We are interested in developing well-posedness of the Cauchy problem \eqref{NLS} for a general potential $\mathcal N(u)$ given as finite or infinite series (including exponential-type $e^{a |u|^r}$, or oscillatory-type such as $\sin(a |u|^r)$ or $\cos(b|u|^r)$, or low power nonlinearity $|u|^\alpha $ with $\alpha<1$, and hence, of low regularity), with initial data $u_0$ in some Sobolev space (possibly weighted), specified later. 


The Schr\"odinger equation, with several nonlinear terms added, started to be considered back in 1960s and 70s, for example, in {\it optics}, when a continuous wave optical beam, propagating in some media, exhibited self-focusing effects. From the first works of Askaryan \cite{A1962} and Zakharov \cite{Z1967}, it became customary to model the self-focusing of light in optics by the two dimensional nonlinear Schr\"odinger (NLS) equation with a cubic nonlinearity ($|u|^2 u$), considering a potential form $\mathcal N (u) = |u|^2$ as a first approximation for the self-focusing nonlinear medium. Besides the first order approximation, other types of nonlinearities were then considered in optics, including competing, saturated and transitive nonlinearities, see for instance \cite[\S 2.3.1, 2.3.2]{Agrawal2001}, \cite[\S 1.5.1]{KivsharAgrawal2003}. 

In other fields of physics, such as propagation of plasma waves, similar `first order' approximations were adapted too, \cite{Z1972} or \cite{Z1971}. 
It was also understood that a more appropriate physical realization of waveguide propagation would need small departures from the cubic nonlinearity (i.e., from the nonlinearity representing the medium): perturbations by {\it higher order powers} would need to be incorporated (see e.g., \cite{ZakRub1973}, \cite[\S 3]{Z1971}, \cite{BGMP1988}, \cite{PAV1996}), or modifications for other types of nonlinear media such as {\it saturated} or {\it exponential} would have to be considered \cite[\S 2]{ZSS1971}, \cite{GH1991}, \cite{HLRZ2018}, \cite{B1998}. 
In some cases the saturated nonlinearity was approximated by the first few terms from the Taylor expansion, for example, as in cubic-quintic \cite{Pushkarov1979}, \cite{Malkin1993}, \cite{BG2001},\cite{UST}, \cite{M2019}, cubic-quartic \cite{AS2023}, quadratic-cubic \cite{M2019}, or cubic-quintic-septimal models (to approximate nonlinear behavior of metal-dielectric nanocomposites in optics) \cite{RA2017, RMA2015}, even a quadruple nonlinearity such as cubic-quintic-septic-nonic has recently appeared in optical physics literature, \cite{4nonlin}. Mathematical studies of combined nonlinearities in NLS have been on a rise as well; studies of the NLS with two terms can be found in many works, in particular, well-posedness questions and scattering have been investigated, for instance, in \cite{TVZ2007, Zhang2006, KOPV2012, KMV2021, HKW2023, DT2024}, existence of solitary waves and their stability, for example, in \cite{Ohta1995,Reika2003,K2007,Soave2020,CS2021,CKS2023,JJLV2022,KLCT2022,BFG2023}, the NLS with the specific three combined terms (e.g., quadratic-cubic-quartic) was recently investigated in \cite{Liu2021}, \cite{Phan2024}.

A particular example of the above equation with nonlinearities of exponential type have been long used in self-trapped beams in plasma, see \cite{LLT1977}. 
Since the work of Cazenave \cite{Cazenave1979},
the two-dimensional case with an exponential nonlinearity $ u\big(e^{4\pi^2|u|^2}-1\big)$ has been popular when studying exponential nonlinearities as it exhibits energy-critical behavior, the existence of small data global solutions with low regularity (e.g., $H^s(\mathbb{R}^n)$, $s\leq \frac{n}{2}$) was obtained in \cite{NO1998}; see further developments for this nonlinearity and for $\pm \big(e^{\lambda|u|^2}-1- c|u|^2\big)u$, typically studied in dimension 2,  in \cite{Cazenave2003, Chen1990,NO1998,CIMM2009,B2016,A2018,DKM,BIP}.
We remark that nonlinear estimates for exponential nonlinearities typically use Moser-Trudinger type inequality \cite{Trudinger1967,Moser1970, AT2000}, which we don't use in our approach, as we write the exponential nonlinearity via infinite series and only use the commutation of polynomial weights with the derivatives, thus, enabling us to consider any type of exponential nonlinearity as long as the series is convergent.  

For a more complete list of different nonlinear potentials used for physical realizations with the NLS model, see for example, \cite[Table 1]{FP1998}, \cite[Table1.1]{FP2000}, also reviews in \cite{B1998}, \cite{SS1999}, \cite{M2019}.
\smallskip

In this paper, we consider the NLS equation \eqref{NLS} with combined nonlinearities of a finite or infinite sum, thus, fully approximating potentials such as exponential or oscillatory type ($\sin$ or $\cos$). 
We are interested in a more unified approach to treat these type of nonlinearities, and hence, we write the potential term $\mathcal N(u)$ as a power series $\sum d_k |u|^{\alpha_k}$. 

Furthermore, we allow the powers $\alpha_k$ to be not necessarily integers, more specifically, we allow {\it any} positive power $\alpha_k>0$ in the series in \eqref{nonlinearterm}. 
In this work, we consider only the one-dimensional setting of such combined nonlinearities and address the well-posedness theory via an approach that {\it does not rely on Strichartz estimates}. This is because we would like to include 
small powers of $\alpha_k$, where the standard methods to obtain existence and uniqueness of solutions (e.g., via Strichartz estimates) are not suitable or applicable (for example, see discussion in \cite{CazNaum2016}),  
thus, forcing one to search for an alternative method to Strichartz estimates. Moreover, even in the case of a finite number of nonlinear terms, the scaling invariance breaks, which makes it more difficult to, for instance,  choose Strichartz pairs and the time-space norms which would  work simultaneously for all terms. Therefore, we use an approach of Cazenave-Naumkin \cite{CazNaum2016}, a more elementary method that relies on polynomial weights
in the definition of the weighted $L^2$-based spaces, which can be propagated with the solutions of the linear Schr\"odinger evolution and their derivatives (see Section \ref{S:Idea}), or commuted with derivatives in the Fourier transform sense, see Lemmas \ref{derivexp2}, \ref{linearEst}. In summary, do not rely on any scaling or Strichartz pairs. 
The approach of Cazenave-Naumkin \cite{CazNaum2016} has been applied to obtain well-posedness (and global existence and scattering in some cases) in different nonlinear models that typically lack regularity, see, for instance, \cite{AroraRianoRoudenko2021,cazenave2021asymptotic,CazNaum2018,LinaresPonceGleison2019,LinaresPonceGleison2019II,LinaresMiyaGus2019,Miyazaki2020,FriedRianoRoudSonYang2022,PRR,RAWR}.

We address the question of the well-posedness, first, locally in time, and then, in some cases, globally, including scattering. Since global behavior is challenging to investigate analytically in many cases, we also include some numerical simulations, which confirm our analytical results and then provide further extensions.  
Before stating our results, we recall that a Cauchy problem is well-posed if one can assure existence, uniqueness, and continuous dependence of the data-to-solution flow map, either locally or globally in time.

\subsection{Statement of Results}

We start with some notation. {Denote $\langle x \rangle=(1+|x|^2)^{\frac{1}{2}}$.}

Let $n \in \mathbb{R}^{+}$, $r, M \in \mathbb{Z}^{+}$ be such that  
\begin{equation}\label{deffparame}
	\begin{aligned}
		{n > 0}, \; r\geq 3, \, \, \text{ and }\, \, M \geq  n+ r.
	\end{aligned}
\end{equation}
We define the space $\mathcal{X}$
as
{\small \begin{equation}{\label{Xspace}}
    \mathcal{X} = \bigg\{ u \in {\dot H^M(\mathbb{R}) \cap \dot H^1(\mathbb R) } :
    \left( {
    \begin{array}{cl}
        {\rm I.} & \langle{x}\rangle^{n}u\in{L^{\infty}}(\mathbb{R})\\
        {\rm II.} & \langle{x}\rangle^{n}\partial_{x}^\beta{u}\in{L^2}(\mathbb{R}) \; ~\text{for}~ \; 1 \leq \beta \leq r\\
        {\rm III.} & ~~\partial_{x}^\beta{u} \in L^2(\mathbb{R}) \; ~\text{for}~ \; r+1 \leq \beta \leq M
    \end{array}
    }
    \right) ~
    \bigg\},
\end{equation}}
equipped with the norm 
\begin{equation}{\label{Xnorm}}
	\|u\|_{\mathcal{X}} = \|\langle x \rangle^n u\|_{L^{\infty}} + \sum_{k=1}^{r} \|\langle x \rangle^{n} \partial_{x}^{k} u\|_{L^2} + {\sum_{k=r+1}^{M} \|\partial_{x}^{k} u\|_{L^2}}.
\end{equation}
\begin{remark}
Note that in 1D
$\|u\|_{L^2}\leq \|\langle x \rangle^{-n}\|_{L^2}\|\langle x \rangle^n u\|_{L^{\infty}}<\infty$ if $n>\frac12$, and thus, in this case the space $\mathcal{X}$ is embedded into the inhomogeneous Sobolev space $H^M(\mathbb{R})$; hence, in \eqref{Xspace} $u \in \dot{H}^M(\mathbb R) \cap \dot{H}^1(\mathbb R)$ can be replaced with $u \in H^M(\mathbb R)$ as well as the last sum in \eqref{Xnorm} can be replaced with $\|J^M u\|_{L^2}$, where $\widehat{J^M u}(\xi) = \la \xi \ra^M \hat{u}(\xi)$. 
\end{remark}

\smallskip

Since we would like to include small $\alpha$ in nonlinearity $|u|^\alpha u$, in order to be able to differentiate it (and thus, handle the term $|u|^{\alpha-1}$), we introduce the {\it non-vanishing} condition 
\begin{equation}{\label{Xinf0}}
 \inf_{x\in\mathbb{R}} ~\langle x \rangle^n |u(x)| > 0.
\end{equation}

As the local existence time will depend on the infimum value in \eqref{Xinf0}, we rewrite this condition specifying a lower bound as
\begin{equation}\label{Xinf}
		\qquad \qquad \inf_{x\in\mathbb{R}} \langle x \rangle^n |u_0(x)| \geq \lambda>0 \quad \mbox{for ~~some} \quad \lambda >0.
\end{equation}

Our first result establishes the local well-posedness of solutions in the space $\mathcal{X}$ for initial data bounded away from zero as in \eqref{Xinf}. 

\begin{theorem}\label{LWP-X}
{\bf (Local well-posedness in $\mathcal{X}$)}
Let $\{d_k\}$ be a sequence of complex numbers and $\{\alpha_k\}$ be a sequence of positive real numbers. Consider $n,r, M$ as in \eqref{deffparame}.  
 Suppose that for any $R_0>0$ the following condition holds for sequences $\{d_k\}$ and $\{\alpha_k\}$: 
	\begin{equation}\label{coeffcond}
		\sum_{k=0}^{\infty}\sum_{0\leq \beta \leq M}|d_k|\big(1+C(\alpha_k,\beta))(R_0^{|\alpha_k-2\beta|}+|\alpha_k-2\beta|R_0^{|\alpha_k-2\beta-1|})<\infty,
	\end{equation}
where 
\begin{equation}\label{E:C(a,b)}
C(\alpha_k,\beta)=\left\{
\begin{array}{ll}
 0& \quad \rm{if} ~ \beta=0, \\
\underbrace{|\alpha_k||\alpha_k-2|\dots |\alpha_k-2(\beta-1)|}_{\beta-\text{times}} 
&\quad \rm{if} ~ \beta\geq 1.
\end{array}
\right.
\end{equation}
If $u_0 \in \mathcal{X}$ satisfies \eqref{Xinf}, then there exist $T>0$ and a unique solution $u \in C([-T,T], \mathcal{X})$ of \eqref{NLS} with $\mathcal{N}(u)$ given by \eqref{nonlinearterm} such that
\begin{equation}\label{infcondsolut}
	   \sup_{t\in [-T,T]}\|\langle x\rangle^n(u(t)-u_0)\|_{L^{\infty}(\mathbb R)}\leq \frac{\lambda}{2}. 
	\end{equation}
Moreover, the map $u_0 \mapsto u(\cdot,t)$ is continuous in the following sense: for any $0<\widetilde{T}<T$, there exists a neighborhood $V$ of $u_0$ in $\mathcal{X}$ satisfying \eqref{Xinf} such that the map data-to-solution is Lipschitz continuous from $V$ into the class $C([-\widetilde{T},\widetilde{T}],\mathcal{X})$.
\end{theorem}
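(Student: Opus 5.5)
The plan is a contraction argument for the Duhamel formulation
\[
\Phi(u)(t)=e^{it\partial_x^2}u_0+i\int_0^t e^{i(t-s)\partial_x^2}\big(\mathcal N(u)u\big)(s)\,ds ,
\]
following Cazenave--Naumkin, on the closed set
\[
E_T=\Big\{u\in C([-T,T],\mathcal X):\ \sup_{t}\|u(t)\|_{\mathcal X}\le 2C_0\|u_0\|_{\mathcal X},\ \sup_t\|\langle x\rangle^n(u(t)-u_0)\|_{L^\infty}\le \tfrac{\lambda}{2}\Big\},
\]
with the metric given by the $\mathcal X$-norm in sup over $[-T,T]$. On $E_T$, condition \eqref{Xinf} together with the second constraint gives the pointwise two-sided bound $\tfrac{\lambda}{2}\langle x\rangle^{-n}\le |u(t,x)|\le 2C_0\|u_0\|_{\mathcal X}\langle x\rangle^{-n}$. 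This is what makes every factor $|u|^{\alpha_k-2\beta}$, even with a negative exponent, a controlled weight of the form $\langle x\rangle^{\pm n|\alpha_k-2\beta|}$ times $R_0^{|\alpha_k-2\beta|}$, where $R_0$ is chosen as $\max(2/\lambda,\,2C_0\|u_0\|_{\mathcal X})$.

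For the linear part I would use the commutation lemmas (Lemmas \ref{derivexp2}, \ref{linearEst}). Using $x\,e^{it\partial_x^2}=e^{it\partial_x^2}(x+2it\partial_x)$, the weights $\langle x\rangle^n$ pass through the free flow at the cost of extra derivatives and polynomial factors in $t$; this is why we need $M\ge n+r$. They should give
\[
\|e^{it\partial_x^2}f\|_{\mathcal X}\le C_0\|f\|_{\mathcal X}\quad\text{for } |t|\le 1 .
\]
For the $L^\infty$ difference condition, I would write $e^{it\partial_x^2}u_0-u_0=i\int_0^t e^{is\partial_x^2}\partial_x^2u_0\,ds$ and bound $\langle x\rangle^n$ of it in $L^\infty$ by $CT\|u_0\|_{\mathcal X}$, using the one-dimensional Sobolev embedding with weights and the II/III parts of the norm. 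Consequently both the linear evolution and the Duhamel term stay within $\lambda/2$ of $u_0$ once $T$ is small.

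The nonlinear estimate is the main step. I would differentiate $|u|^{\alpha_k}u=(u\bar u)^{\alpha_k/2}u$ up to $M$ times by Faà di Bruno. Each term is a product of $|u|^{\alpha_k-2\beta}$ (coefficient bounded by $C(\alpha_k,\beta)$, from $\beta$ derivatives of $(|u|^2)^{\alpha_k/2}$) with $\beta$ factors of the form $\partial^{j}(u\bar u)$ and the remaining derivatives of $u$. I then distribute the weight $\langle x\rangle^n$ and the $L^2$ norm onto the factor with the most derivatives, and put the others in weighted $L^\infty$ via $\|\langle x\rangle^n\partial^j u\|_{L^\infty}\lesssim\|u\|_{\mathcal X}$ for $j<r$. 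The negative powers are absorbed by the lower bound $|u|\ge\tfrac{\lambda}{2}\langle x\rangle^{-n}$: each power of $|u|^{-1}$ costs $\langle x\rangle^{n}$, which is compensated by the decaying factors $\partial^j(u\bar u)$, each carrying $\langle x\rangle^{-2n}$. Summing over $k$ and $\beta$, the bound on $\|\mathcal N(u)u\|_{\mathcal X}$ is exactly a series like \eqref{coeffcond}. For the Lipschitz difference $\mathcal N(u)u-\mathcal N(v)v$ I would use the mean value theorem in $|u|$, which produces the factor $|\alpha_k-2\beta|R_0^{|\alpha_k-2\beta-1|}$. So \eqref{coeffcond} gives
\[
\|\Phi(u)-\Phi(v)\|\le C(R_0)\,T\,\|u-v\| ,
\]
and $\Phi$ is a contraction for small $T$, with existence by the fixed point. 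I expect this bookkeeping to be the real obstacle: the weights must balance for arbitrary small $\alpha_k$, and the high derivatives $r<\beta\le M$ (unweighted) must still close. If the weight counting fails, I would first try to place only the top-order factor in $L^2$ and use $n>0$ together with the compensating decay of the remaining factors.

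Uniqueness in $C([-T,T],\mathcal X)$ among solutions satisfying \eqref{infcondsolut} follows from the same difference estimate and Gronwall. For continuous dependence, I take $v_0$ in a small $\mathcal X$-neighbourhood of $u_0$. Then $v_0$ satisfies \eqref{Xinf} with, say, $\lambda'$ close to $\lambda$, and the same $R_0$ works, so the existence time is uniform on the neighbourhood. The contraction estimate then gives
\[
\sup_t\|u-v\|_{\mathcal X}\le C\|u_0-v_0\|_{\mathcal X}
\]
on $[-\widetilde T,\widetilde T]$. Finally, time continuity with values in $\mathcal X$ follows from strong continuity of the free group on $\mathcal X$ (from the commutator identities) and from the integral term.
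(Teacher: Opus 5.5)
\textbf{Where the plan matches the paper.} Your architecture is the paper's: a Duhamel contraction on a ball of $C([-T,T],\mathcal X)$ that carries the non-vanishing constraint, with Lemmas \ref{derivexp2} and \ref{linearEst} for the free flow, and Fa\`a di Bruno with $|u|^{\alpha_k-2\beta}\le R_0^{|\alpha_k-2\beta|}\langle x\rangle^{-n(\alpha_k-2\beta)}$. Building $\sup_t\|\langle x\rangle^n(u(t)-u_0)\|_{L^\infty}\le\lambda/2$ into the set is a nice touch, since it yields \eqref{infcondsolut} directly. For the weighted components ($1\le k\le r$) your placement closes: the top-order factor goes in $\langle x\rangle^{n}L^2$, and every other factor has fewer than $r$ derivatives, so it sits in $\langle x\rangle^{n}L^\infty$.

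\textbf{The gap: unweighted components.} The problem is $\|\partial_x^k(|u|^{\alpha_k}u)\|_{L^2}$ for $r<k\le M$. You flag this as the obstacle but do not resolve it, and your fallback fails there. The $\mathcal X$-norm gives no decay at all for $\partial_x^l u$ with $l>r$. It also gives no weighted $L^\infty$ bound even for $\partial_x^r u$, since that would need $\langle x\rangle^n\partial_x^{r+1}u\in L^2$.

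Here is a concrete failure. Since $M\ge n+r$ is only a lower bound, $M\ge 2r+2$ is admissible. Then $\partial_x^{2r+2}(|u|^{\alpha}u)$ contains $c_\alpha|u|^{\alpha-2}|\partial_x^{r+1}u|^2u$, whose modulus is $|u|^{\alpha-1}|\partial_x^{r+1}u|^2$. For $\alpha<1$ the lower bound only gives $|u|^{\alpha-1}\lesssim\langle x\rangle^{n(1-\alpha)}$. Here the factor $\partial_x^{2r+2}(u\bar u)$ does not carry the $\langle x\rangle^{-2n}$ you count on: its piece $|\partial_x^{r+1}u|^2$ has no decay under the $\mathcal X$-norm. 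No redistribution of $L^2$ and $L^\infty$ (or $L^4$) among the factors supplies the missing $\langle x\rangle^{-n(1-\alpha)}$.

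\textbf{The missing idea.} What you need is the weighted interpolation of Lemma \ref{interlemma}. Take $f=\partial_x^r u$ in the Nahas--Ponce inequality $\|J^{\theta a}(\langle x\rangle^{(1-\theta)b}f)\|_{L^2}\lesssim\|J^{a}f\|_{L^2}^{\theta}\|\langle x\rangle^{b}f\|_{L^2}^{1-\theta}$. Using $\langle x\rangle^n\partial_x^ru\in L^2$ and $\partial_x^Mu\in L^2$, this gives
$\|\langle x\rangle^{n\frac{M-l}{M-r}}\partial_x^l u\|_{L^2}\lesssim\|u\|_{\mathcal X}$ for $r\le l\le M$.
By Sobolev it then gives $\|\langle x\rangle^{n\frac{M-l-1}{M-r}}\partial_x^l u\|_{L^\infty}\lesssim\|u\|_{\mathcal X}$ for $r\le l\le M-1$, alongside the full weight $n$ for $l\le r-1$.

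With these fractional weights, put the top-order factor in $L^2$ and the rest in $L^\infty$. The $2\beta+1$ factors of a term $|u|^{\alpha-2\beta}\partial_x^{\nu_1}u\cdots\partial_x^{\nu_{2\beta+1}}u$ (conjugates allowed), with $\sum_j\nu_j=k$, then supply decay at least $\langle x\rangle^{-n(2\beta+\frac{M-k}{M-1})}$. This beats $\langle x\rangle^{n(2\beta-\alpha)}$ for every $\alpha>0$, with constants independent of $\alpha$, which you need in order to sum over $k$. The same interpolation applied to $u-v$ handles the Lipschitz estimate.

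Without this step, neither the self-map nor the contraction bound is established in the $\dot H^k$, $k>r$, components when some $\alpha_k<1$ and $M\ge 2r+2$. The rest of your plan (uniqueness via Gronwall, uniform existence time and Lipschitz dependence on a neighbourhood) goes through once this estimate is in place.
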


\begin{remark}\label{R:1} 
An example of an initial condition that satisfies the hypothesis of Theorem \ref{LWP-X} is 
\begin{equation}\label{E:ID}
u_0(x)=\frac{2\lambda e^{i\theta}}{\langle x \rangle^n}+\varphi,
\end{equation}
where $\theta$ is an arbitrary angle, power {$n>0$}, and $\varphi$ is a Schwartz class function with $\|\langle x \rangle^n \varphi\|_{L^{\infty}}\leq \lambda$.
\end{remark}

Before giving examples of nonlinearities that satisfy \eqref{coeffcond}, we state global existence and scattering result (obtained via pseudo-conformal transformation similar to \cite{CazNaum2016}), which is a consequence of the above local well-posedness theorem. 
\begin{theorem}\label{scatres} 
{\bf (Global existence \& uniqueness for $e^{i|x|^2} \mathcal X$ and scattering in $H^s$)}
Let $\{d_k\}$ be a sequence of complex numbers and $\{\alpha_k\}$ be a sequence of positive real numbers such that
\begin{equation}\label{E:a2}
\alpha_k>2 \quad \mbox{for~any~index} \quad k \geq 0.
\end{equation}
Consider { $n > \frac{1}{2}$} and $r, M$ as in \eqref{deffparame}.  
Suppose that for any $R_0>0$ the following condition holds for sequences $\{d_k\}$ and $\{\alpha_k\}$: 
\begin{equation}\label{coeffcond2.1}
\sum_{k=0}^{\infty}\sum_{0\leq \beta \leq M}\frac{|d_k|}{\alpha_k-2}\big(1+C(\alpha_k,\beta))(R_0^{|\alpha_k-2\beta|}+|\alpha_k-2\beta|R_0^{|\alpha_k-2\beta-1|})<\infty,
\end{equation}
where $C(\alpha_k,\beta)$ is as in \eqref{E:C(a,b)}. 
For $v_0\in \mathcal{X}$ satisfying \eqref{Xinf0} and $b \in \mathbb R$, define $u_0=e^{\frac{ib|x|^2}{4}}v_0$.
Let 
\begin{equation}\label{E:Sn_cond}
    s = s_n \in
    \begin{cases}
        \{1\} & \text{ if } n > \frac{3}{2} \\
        \left(0, n-\frac{1}{2}\right) & \text{ if } n \in \left(\frac{1}{2},\frac{3}{2}\right].
    \end{cases}
\end{equation}

Then, for $b>0$ sufficiently large, there exists a unique global solution $u(t,x)$ of \eqref{NLS} with initial data $u_0$ and the nonlinearity given by \eqref{nonlinearterm} subject to \eqref{E:a2} in 
$$
C([0,\infty);H^{s_n}(\mathbb{R}))\cap L^{\infty}\big([0,\infty);L^{\infty}(\langle x \rangle^{\frac{1}{2}}\, dx)\big).
$$ 
Moreover, $u$ scatters in $H^{s_n}(\mathbb R)$, i.e., there exists $u_{+}\in H^{s_n}(\mathbb{R})$ such that
\begin{equation*}
\lim_{\substack{t \to \infty \\ t>0}} \|u(t)- e^{it\partial_x^2}u_{+}\|_{H^{s_n}}=0.
\end{equation*}
In addition,
\begin{equation*}
\sup_{t>0} \, (1+t)^{\frac{1}{2}}\|u(t)\|_{L^{\infty}}<\infty.
\end{equation*}
\end{theorem}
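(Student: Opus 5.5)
The plan is to use the pseudo-conformal transformation, as in \cite{CazNaum2016}, to turn the global problem for $u$ into a local problem on a bounded time interval for a transformed unknown $v$, and then apply the local theory of Theorem \ref{LWP-X} (adapted to a time-dependent nonlinearity).

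\textbf{The transformation.} Given $u$, define
\begin{equation*}
u(t,x)=(1+bt)^{-\frac12}\, e^{\frac{ib|x|^2}{4(1+bt)}}\, v\Big(\frac{t}{1+bt},\frac{x}{1+bt}\Big).
\end{equation*}
Then $u(0)=e^{ib|x|^2/4}v_0$, and $t\in[0,\infty)$ corresponds to $s=t/(1+bt)\in[0,1/b)$. A direct computation shows that $u$ solves \eqref{NLS} if and only if $v$ solves
\begin{equation*}
i v_s + v_{yy} + \sum_k d_k (1-bs)^{\frac{\alpha_k}{2}-2}|v|^{\alpha_k} v=0 .
\end{equation*}
Here one uses $1+bt=(1-bs)^{-1}$. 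The $k$-th term picks up the factor $(1+bt)^{2-\alpha_k/2}=(1-bs)^{\alpha_k/2-2}$.

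\textbf{Local solution for $v$ up to $s=1/b$.} Next I rerun the fixed-point argument of Theorem \ref{LWP-X} for this time-dependent nonlinearity on $[0,1/b]$. The coefficients $h_k(s)=(1-bs)^{\alpha_k/2-2}$ are singular at $s=1/b$ when $\alpha_k<4$, but they are integrable because $\alpha_k>2$:
\begin{equation*}
\int_0^{1/b}(1-bs)^{\frac{\alpha_k}{2}-2}\,ds=\frac{2}{b(\alpha_k-2)} .
\end{equation*}
Every nonlinear estimate in the proof of Theorem \ref{LWP-X} is linear in $|d_k|$ times the time length. Replacing time length by $\int|h_k|$ therefore turns the smallness condition into smallness of
\begin{equation*}
\frac1b\sum_k \frac{|d_k|}{\alpha_k-2}\,(\cdots),
\end{equation*}
where the series converges by \eqref{coeffcond2.1}. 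So for $b$ large, the contraction closes in $C([0,1/b],\mathcal X)$, preserves \eqref{infcondsolut}, and in particular $v(1/b)$ exists in $\mathcal X$. The point to check is that in the Duhamel/weighted estimates time only enters through $\int_0^s$ of the nonlinearity, so the singular weight can be absorbed this way. I expect this to be the main technical obstacle, together with the commutation of $\langle x\rangle^n$ weights with the evolution, which produce factors polynomial in $s\le 1/b$ and are harmless.

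\textbf{Transferring back to $u$.} Uniqueness of $u$ follows from uniqueness of $v$. The decay bound is immediate from the formula:
\begin{equation*}
\|u(t)\|_{L^\infty}\le (1+bt)^{-1/2}\|v\|_{L^\infty}.
\end{equation*}
The weighted bound $\langle x\rangle^{1/2}|u|\lesssim \sup_s\|\langle y\rangle^{n}v\|_{L^\infty}$ holds for $n\ge\frac12$, since $\langle x\rangle^{1/2}(1+bt)^{-1/2}\lesssim\langle x/(1+bt)\rangle^{1/2}$.

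For scattering, I use the standard identity
\begin{equation*}
e^{-it\partial_x^2}u(t)=\mathcal{M}\,\mathcal{D}\,\big(e^{-is\partial_y^2}v(s)\big)\text{-type expression},
\end{equation*}
with the lens transform conjugating $e^{it\partial_x^2}$. Concretely, $\widehat{e^{-it\partial_x^2}u(t)}$ equals, up to modulation and dilation by fixed factors depending on $b$, the function $e^{-is\partial_y^2}v(s)$ evaluated with the chirp $e^{i(\cdot)^2/(4(t+1/b))}$. As $t\to\infty$ we have $s\to1/b$, and $v(s)\to v(1/b)$ in $\mathcal X$. Define $u_+$ from $v(1/b)$, namely the inverse pseudo-conformal image of the linear profile $e^{i(1/b)\partial_y^2}\cdots$ with the chirp $e^{ib|x|^2/4}$.

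Then convergence in $H^{s_n}$ reduces to two facts. First, continuity of $v$ at $s=1/b$ in $\mathcal X$. Second, convergence of the chirp factor $e^{i c(t)|x|^2}\to$ its limit acting on a function $w$ with $\langle x\rangle^{n}w\in L^\infty$ and $\partial w\in \langle x\rangle^{-n}L^2$. For $n>\frac32$, $xw\in L^2$, so the chirp difference converges in $H^1$ by dominated convergence. For $\frac12<n\le\frac32$, only $\langle x\rangle^{s}w\in L^2$ for $s<n-\frac12$. Interpolating between $L^2$ and $H^1$ with the multiplier $e^{ic|x|^2}$ then gives convergence in $H^{s}$ for $s<n-\frac12$, which is exactly the range in \eqref{E:Sn_cond}.
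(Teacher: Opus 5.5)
Your architecture is the paper's. You use the same change of variables \eqref{Pseudo2} and the same contraction for the non-autonomous problem on $[0,b^{-1}]$, with the singular weight absorbed through $\int_0^{1/b}(1-bs)^{\frac{\alpha_k}{2}-2}\,ds=\frac{2}{b(\alpha_k-2)}$ (this is Theorem \ref{existnonautequ}). The restriction $s_n<n-\frac12$ has the same origin, namely $\langle x\rangle^{s_n-n}\in L^2(\mathbb{R})$. The local part, the decay bound and the $L^\infty(\langle x\rangle^{1/2}dx)$ bound are fine. The transfer back to $u$, however, has two defects.

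\emph{The scattering identity.} As you state it, with a Fourier transform, a dilation and the chirp $e^{i(\cdot)^2/(4(t+1/b))}$, it is not correct; that chirp is the one appearing in $u(t)$ itself. The exact Cazenave--Weissler identity has no Fourier transform, no dilation, and a \emph{fixed} chirp:
\begin{equation*}
e^{-it\partial_x^2}u(t)=e^{\frac{ibx^2}{4}}\,e^{-is\partial_x^2}v(s),\qquad s=\frac{t}{1+bt}.
\end{equation*}
You can check it by applying \eqref{Pseudo2} to the free solution $\sigma\mapsto e^{i(\sigma-s)\partial_x^2}v(s)$. It is mapped to a free solution that coincides with $u$ at time $t$ and has initial datum $e^{\frac{ibx^2}{4}}e^{-is\partial_x^2}v(s)$.

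Hence $u_+=e^{\frac{ibx^2}{4}}e^{-\frac{i}{b}\partial_x^2}v(\frac1b)$; note the minus sign. Your ``chirp convergence'' step collapses to boundedness of multiplication by the fixed factor $e^{\frac{ibx^2}{4}}$ from $H^{s}\cap L^2(\langle x\rangle^{2s}dx)$ into $H^{s}$. Your $L^2$--$H^1$ interpolation supplies this; the paper uses Stein's $\mathcal{D}^{s}$ and Theorem \ref{derivexp} instead.

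What remains is $e^{-is\partial_x^2}v(s)\to e^{-\frac{i}{b}\partial_x^2}v(\frac1b)$ in that weighted space. This follows from continuity of $v$ at $\frac1b$ in $\mathcal{X}$, Lemma \ref{derivexp2}, and strong continuity of the group. The paper gets the same by writing the difference as the Duhamel tail $-i\int_s^{1/b}e^{-i\tau\partial_x^2}\mathcal{N}_1(v(\tau))v(\tau)\,d\tau$ and letting $\widetilde{\mathcal{G}}_3(t,R)\to 0$.

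The same identity, together with unitarity of $e^{it\partial_x^2}$ on $H^{s_n}$, also gives $u\in C([0,\infty);H^{s_n}(\mathbb{R}))$. This is part of the theorem, and your proposal never addresses it; the paper verifies it directly in \eqref{Jestiglobal}.

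\emph{Uniqueness.} ``Uniqueness of $u$ follows from uniqueness of $v$'' only covers solutions whose pseudo-conformal transform lies in $C([0,b^{-1}];\mathcal{X})$ and stays non-vanishing. An arbitrary solution in $C([0,\infty);H^{s_n})\cap L^{\infty}([0,\infty);L^{\infty}(\langle x\rangle^{1/2}dx))$ has no such regularity or lower bound. You need a direct argument in the stated class, for instance Duhamel in $L^2$ plus Gronwall on bounded time intervals. This uses two facts: the class is bounded in $L^\infty_{t,x}$, and for $\alpha_k>2$ the series nonlinearity is Lipschitz on $L^\infty$-bounded sets, with the needed summability following from \eqref{coeffcond2.1}. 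This is the energy-type argument the paper alludes to.
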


\begin{remark}
In the case of two nonlinearities~ $\mathcal N(u)=d_1|u|^{\alpha_1}+d_2|u|^{\alpha_2}$, $\alpha_1<\alpha_2$, scattering was shown in \cite[Theorem 1.8]{TVZ2007} for the case of $\alpha_1 > \frac{1+\sqrt{17}}{2}$ and $d_1, d_2 <0$ (defocusing nonlinearities), provided $u_0 \in H^1(\mathbb{R})$ with finite variance. We improve in that case the range for scattering for data of the type  $u_0(x) = e^{\frac{ib|x|^2}{4}}v_0(x)$ to $\alpha_1 > 2$ in Theorem \ref{scatres}. In the same paper in the case $0 < \alpha_1 < \alpha_2 < 4$ for $d_2 > 0$ (focusing larger nonlinearity) it was indicated that scattering is unknown, our Theorem \ref{scatres} shows scattering in this case for data $u_0(x) = e^{\frac{ib|x|^2}{4}}v_0(x)$, provided $2<\alpha_1 < \alpha_2 < 4$.  
\end{remark}

Observe that when the nonlinearity in \eqref{nonlinearterm} is sufficiently regular, i.e., the series of coefficients $\{\alpha_k\}$ are such that $\alpha_k\geq 1$ for all $k\geq 1$, then it is possible to obtain the local well-posedness in $H^1(\mathbb{R})$ for the combined nonlinearity $\mathcal N(u)$ without using the weighted space $\mathcal{X}$ (essential for the low powers together with the infimum condition), which we state next.

\begin{theorem}\label{LocalwellpossH1}
 {\bf (i) (Local well-posedness in $H^1$)}. Let $\{d_k\}$ be a sequence of complex numbers and $\{\alpha_k\}$ be a sequence of positive real numbers such that  
\begin{equation}\label{E:a1}
\alpha_k \geq 1 \quad \mbox{for~any~index} \quad k\geq 0.
\end{equation}
Suppose that for any $R_0>0$ the following condition holds for sequences $\{d_k\}$ and $\{\alpha_k\}$: 
\begin{equation}\label{coeffcond2}
\sum_{k=0}^{\infty}|d_k| \big( 1+|\alpha_k|+|\alpha_k|^2 \big)\, R_0^{\alpha_k}<\infty.
\end{equation}
Then for any $u_0 \in H^1(\mathbb R)$, there exist $T>0$ and a unique solution $u \in C([-T,T], H^1(\mathbb{R}))$ of \eqref{NLS} with $\mathcal{N}(u)$ given by \eqref{nonlinearterm} subject to \eqref{E:a1}. 
Additionally, given $u_0\in H^1(\mathbb{R})$, there exist a time $T>0$ and a neighborhood $V$ of $u_0$ in $H^1(\mathbb{R})$ such that the flow-map data-to-solution from $V$ into $C([-T,T];H^1(\mathbb{R}))$ is continuous.
\smallskip

{\bf (ii)  (Global existence \& uniqueness for $e^{i|x|^2}H^1$ and scattering in $H^1$)}. Assume $\{\alpha_k\}$ satisfy 
\eqref{E:a2}.  
Additionally, suppose that for any $R_0>0$ the following condition holds for sequences $\{d_k\}$ and $\{\alpha_k\}$: 
	\begin{equation}\label{coeffcond3}
\sum_{k=0}^{\infty}\frac{|d_k|}{\alpha_k-2} \big(1+|\alpha_k|+|\alpha_k|^2)R_0^{\alpha_k}<\infty.
	\end{equation}
For $v_0 \in  H^1(\mathbb{R})\cap L^2(|x|^2\, dx)$ and $b \in \mathbb R$, define $u_0=e^{\frac{ib|x|^2}{4}}v_0$. 

If $b>0$ is sufficiently large, then there exists a unique global solution $u$ of \eqref{NLS} with initial data $u_0$ such that
\begin{equation*}
  u \in C([0,\infty); H^1(\mathbb{R})). 
\end{equation*}
Moreover, $u$ scatters in $H^1(\mathbb{R})$, i.e., there exists $u_{+}\in H^1(\mathbb{R})$ such that
\begin{equation*}
    \lim \limits_{\substack{t\to \infty \\ t>0}}\|u(t)-e^{it\partial_x^2} u_+\|_{H^1}=0.
\end{equation*}
\end{theorem}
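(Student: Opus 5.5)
For part (i), I will run a contraction argument for the Duhamel formulation
\[
u(t)=e^{it\partial_x^2}u_0+i\int_0^t e^{i(t-s)\partial_x^2}\big(\mathcal N(u)u\big)(s)\,ds
\]
in the closed ball $B=\{u\in C([-T,T];H^1): \|u\|_{L^\infty_TH^1}\le 2\|u_0\|_{H^1}\}$. No Strichartz estimates are needed, since $H^1(\mathbb R)$ is an algebra embedded in $L^\infty$ with $\|u\|_{L^\infty}\le C\|u\|_{H^1}$. Set $R_0=C\cdot 2\|u_0\|_{H^1}$.

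The key estimates are termwise, for $\alpha_k\ge1$:
\[
\||u|^{\alpha_k}u\|_{L^2}\le R_0^{\alpha_k}\|u\|_{L^2},\qquad
|\partial_x(|u|^{\alpha_k}u)|\le(\alpha_k+1)|u|^{\alpha_k}|\partial_x u|.
\]
The second bound holds because $z\mapsto|z|^{\alpha}z$ is $C^1$ when $\alpha\ge1$. For differences I will use the pointwise bound
\[
\big||u|^{\alpha}u-|v|^{\alpha}v\big|\le(\alpha+1)(|u|^\alpha+|v|^\alpha)|u-v|.
\]
Summing over $k$, hypothesis \eqref{coeffcond2} gives $\|\mathcal N(u)u\|_{H^1}\le C\sum|d_k|(1+\alpha_k)R_0^{\alpha_k}\|u\|_{H^1}<\infty$. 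This makes the map send $B$ to itself for small $T$.

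Contraction in $H^1$ is the main obstacle. The derivative of the difference involves $\partial_z(|z|^\alpha z)$, which is only Hölder when $1\le\alpha<2$. So I will do what is standard: contract in the weaker metric $L^\infty_TL^2$ on $B$, which is complete because $B$ is weak-$*$ closed. The $L^2$ difference bound uses only the pointwise estimate above, with total constant $\sum|d_k|(1+\alpha_k)R_0^{\alpha_k}$. This yields existence, and uniqueness in $C([-T,T];H^1)$ follows from the same $L^2$ estimate and Gronwall.

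Continuity of the flow in $H^1$ then comes via the Bona–Smith/Kato approach, or by the argument of Cazenave's book. Write $G(z)=\sum d_k|z|^{\alpha_k}z$. The factor $|\alpha_k|^2$ in \eqref{coeffcond2} is exactly what makes the series $\sum d_k\,\partial_z^2(|z|^{\alpha_k}z)$ locally controlled (with weight $|z|^{\alpha_k-1}$). This is where $\alpha_k^2 R_0^{\alpha_k}$ enters, and it lets us pass to the limit in $\partial_x G(u_n)=G'(u_n)\partial_x u_n$ via dominated convergence in $L^2$. Continuity of $t\mapsto u(t)$ in $H^1$ follows from the energy/mass conservation-free argument: the Duhamel integral is continuous in $H^1$ since its integrand is in $L^\infty_T H^1$.

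For part (ii), I will follow the pseudo-conformal scheme of Theorem \ref{scatres}. Define $v$ by
\[
u(t,x)=(1+bt)^{-1/2}e^{\frac{ib|x|^2}{4(1+bt)}}\,v\Big(\tfrac{t}{1+bt},\tfrac{x}{1+bt}\Big).
\]
Then $u$ solves \eqref{NLS} if and only if $v$ solves
\[
iv_s+v_{yy}+\sum_k d_k(1-bs)^{\frac{\alpha_k}{2}-2}|v|^{\alpha_k}v=0,\qquad s\in[0,1/b),\quad v(0)=v_0.
\]
Because $\alpha_k>2$, the weights satisfy $\int_0^{1/b}(1-bs)^{\alpha_k/2-2}ds=\frac{2}{b(\alpha_k-2)}$. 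Running the part (i) estimates with time-dependent coefficients, the nonlinear Duhamel term is bounded by $\frac{C}{b}\sum\frac{|d_k|}{\alpha_k-2}(1+\alpha_k+\alpha_k^2)R_0^{\alpha_k}$, which is finite by \eqref{coeffcond3}. For $b$ large this is small, so a bootstrap keeps $\|v(s)\|_{H^1}\le2\|v_0\|_{H^1}$ on all of $[0,1/b)$, and $v(s)$ is Cauchy in $H^1$ as $s\to1/b$.

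The weight $v_0\in L^2(|x|^2dx)$ is what ensures that the quadratic-phase transform maps between $H^1$ bounds. The relevant identities are $\|xu\|$ versus $\|(x+2it\partial_x)\cdot\|$, and $e^{\frac{ib|x|^2}{4}}v_0\in H^1$ requires $xv_0\in L^2$. Undoing the transformation, the limit $v(1/b)$ gives $u_+$, and the convergence $u(t)-e^{it\partial_x^2}u_+\to0$ in $H^1$ follows from the standard lens-transform identity relating $e^{it\partial_x^2}$ and the conjugated free flow.

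The delicate points are two. First, the $\dot H^1$ part of the scattering estimate requires controlling $\|(y+2i(\cdot)\partial_y)v\|$, i.e., the variance of $v$; this is propagated by the virial identity with the same $1/b$ smallness. Second, the contraction must be uniform up to $s=1/b$, where the weights $(1-bs)^{\alpha_k/2-2}$ blow up when $\alpha_k<4$ but remain integrable.
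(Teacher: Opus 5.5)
Your plan is correct, but part (i) takes a different route from the paper, and the reason you give for that detour is mistaken.

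\textbf{Part (i): what the paper does.} The paper contracts directly in $C([-T,T];H^1)$ using Lemma~\ref{H1nonlinear}. For $\alpha\ge1$ the first derivatives of $F(z)=|z|^{\alpha}z$ are homogeneous of degree $\alpha\ge 1$, hence locally Lipschitz:
$|F'(u)-F'(v)|\lesssim(1+\alpha^2)(|u|+|v|)^{\alpha-1}|u-v|$.
It follows that $\|(F'(u)-F'(v))\partial_xu\|_{L^2}\lesssim(1+\alpha^2)R^{\alpha-1}\|u-v\|_{L^\infty}\|\partial_xu\|_{L^2}$, and $H^1\hookrightarrow L^\infty$ then closes the $H^1$ difference estimate. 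The resulting constant $\Omega(\alpha_k)\sim\alpha_k^2$ is exactly where the $|\alpha_k|^2$ in \eqref{coeffcond2} is used. It is $\partial_z^2(|z|^\alpha z)$, not $\partial_z(|z|^\alpha z)$, that is only H\"older for $1\le\alpha<2$, and the $H^1$ contraction never needs the second derivative.

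\textbf{Part (i): your route and the trade-off.} Your Kato-type scheme is nonetheless valid: contraction in $L^\infty_TL^2$ on an $H^1$-bounded ball, uniqueness by $L^2$ Gronwall, time continuity in $H^1$ from Duhamel, and continuous dependence by a Bona--Smith/Cazenave argument. One small fix is needed: take the ball in $L^\infty((-T,T);H^1)\cap C([-T,T];L^2)$ rather than $C([-T,T];H^1)$, since $L^2$-limits of $H^1$-bounded sequences are only weakly continuous in $H^1$. The paper gets existence, uniqueness in the ball and Lipschitz dependence from a single contraction. Your route needs a separate, softer continuity argument, but it only uses $|F_k'(z)|\le(1+\alpha_k)|z|^{\alpha_k}$ and continuity of $F_k'$. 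So it would not actually need $\alpha_k\ge1$ or the $\alpha_k^2$ weight. Your Gronwall uniqueness argument is also simpler than the paper's step-by-step iteration.

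\textbf{Part (ii).} Here the difference is cosmetic. The paper runs the fixed point for $v$ directly in the norm $\|v\|_{H^1}+\|xv\|_{L^2}$, commuting $x$ with the free flow via Lemma~\ref{derivexp2}. You instead bound $v$ in $H^1$ first and then propagate $\|xv(s)\|_{L^2}$ by the virial identity. That step needs only boundedness, not smallness. With complex $d_k$ the $\Im d_k$ terms are absorbed by Gronwall, using the integrability of $(1-bs)^{\alpha_k/2-2}$.
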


The proof of Theorem \ref{LocalwellpossH1} uses basic properties of $H^1(\mathbb{R})$ and arguments similar to those used in the computation of the $H^M$-norm in Theorem \ref{LWP-X}. Similarly, the global existence and scattering results in Theorem \ref{LocalwellpossH1} are obtained by a careful adaptation of the proof of Theorem \ref{scatres}.
\medskip

\begin{remark} 
We mention that the global well-posedness in $H^1(\mathbb{R})$ for the Cauchy problem  \eqref{NLS} with a {\it single} nonlinear term with  $0<\alpha<4$ (and for sufficiently small data in an appropriate sense for $\alpha \geq 4$) is well-known, for book references, e.g., see \cite[Theorem 6.1]{LinaresPonce2015}, \cite[Chapter 3]{Tao2006}, and \cite[Chapters 4 and 6]{Cazenave2003}. In particular, the initial conditions with faster decay than the polynomial, such as exponential as in the ground state solutions, generate global solutions. 
\end{remark}

We next present several consequences of Theorems \ref{LWP-X}, \ref{scatres}, and \ref{LocalwellpossH1} for a combined number of nonlinearities.
\begin{remark} Observe that \eqref{coeffcond} and \eqref{coeffcond2.1} hold trivially for a finite number of nonlinearities, i.e., given an integer $N \geq 0$ and $d_k=0$ for all $k > N$ ($d_k > 0$ for $0 \leq k \leq N$), consider the potential
\begin{equation*}
	\mathcal{N}(u)=\sum_{k=0}^N d_k|u|^{\alpha_k}.
\end{equation*} 
As a consequence of Theorems \ref{LWP-X} and \ref{scatres}, we deduce the following statement. 
\end{remark}

\begin{corollary}[{\bf Finite number of combined nonlinearities, space $\mathcal X$}]\label{corollwellposed1}
Let $N\geq 0$ be an integer, $d_0,\dots, d_N \in \mathbb{C} \setminus\{0\}$, and $0<\alpha_0 \leq \dots \leq \alpha_N$. If $u_0 \in \mathcal{X}$ satisfies \eqref{Xinf0}, then there exist $T>0$ and a unique solution $u \in C([-T,T], \mathcal{X})$ of the Cauchy problem
\begin{equation}\label{SNNLS}
\left\{\begin{aligned}
    & i\partial_t u + \partial_{x}^{2}u + d_0|u|^{\alpha_0}u+\dots+d_N|u|^{\alpha_N}u =0, \quad x\in \mathbb{R},\, \, \, t\in \mathbb{R}, \\
    &u(x,0) = u_{0}(x), 
		\end{aligned}    \right.
	\end{equation}
such that \eqref{infcondsolut} is also valid. 
Additionally, the map $u_0 \mapsto u(\cdot,t)$ is continuous in the following sense: for any time $0<\widetilde{T}<T$, there exists a neighborhood $V$ of $u_0$ in $\mathcal{X}$ satisfying \eqref{Xinf0} such that the map data-to-solution is Lipschitz continuous from $V$ into the class $C([-\widetilde{T},\widetilde{T}],\mathcal{X})$.
	
Moreover, suppose $2<\alpha_0 \leq \dots \leq \alpha_N$ are fixed in \eqref{SNNLS}, $v_0\in \mathcal{X}$ satisfies \eqref{Xinf0}, and $u_0=e^{\frac{ib|x|^2}{4}}v_0$, $b \in \mathbb R$. Let $s_n$ be as in \eqref{E:Sn_cond}.
 Then, for $b>0$ sufficiently large and the initial condition $u_0$, there exists a unique global solution $u$ of \eqref{SNNLS} in
$$
C([0,\infty);H^{s_n}(\mathbb{R}))\cap L^{\infty}\big([0,\infty);L^{\infty}(\langle x \rangle^{\frac{1}{2}}\, dx)\big).
$$ 
Furthermore, $u$ scatters in $H^{s_n}(\mathbb R)$, i.e., there exists
$u_{+}\in H^{s_n}(\mathbb{R})$ such that
\begin{equation*}
\lim_{\substack{t \to \infty \\ t>0}} \|u(t)- e^{it\partial_x^2} u_{+}\|_{H^{s_n}}=0.
\end{equation*}
\end{corollary}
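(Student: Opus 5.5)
The corollary follows from Theorems \ref{LWP-X} and \ref{scatres} once we check that the finite sequence $\{d_k\}_{k=0}^N$, $\{\alpha_k\}_{k=0}^N$, extended by $d_k=0$ for $k>N$, satisfies \eqref{coeffcond} and \eqref{coeffcond2.1}. So the plan is to verify the hypotheses and then quote the two theorems.

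For the local part, fix any $R_0>0$. In \eqref{coeffcond} only the indices $0\le k\le N$ contribute, and for each such $k$ the inner sum over $0\le\beta\le M$ is finite. Each summand
\begin{equation*}
|d_k|\big(1+C(\alpha_k,\beta)\big)\big(R_0^{|\alpha_k-2\beta|}+|\alpha_k-2\beta|R_0^{|\alpha_k-2\beta-1|}\big)
\end{equation*}
is a finite real number: $C(\alpha_k,\beta)$ is a finite product of the finite quantities $|\alpha_k-2j|$, and $R_0>0$. The double sum is therefore a finite sum of finite terms, so \eqref{coeffcond} holds. The parameters $n,r,M$ obey \eqref{deffparame} because $u_0\in\mathcal X$. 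If $u_0$ satisfies \eqref{Xinf0}, then \eqref{Xinf} holds with $\lambda:=\inf_x\langle x\rangle^n|u_0(x)|>0$. Theorem \ref{LWP-X} then gives existence, uniqueness in $C([-T,T],\mathcal X)$, the bound \eqref{infcondsolut}, and Lipschitz dependence on a neighborhood $V$. Since $\mathcal N(u)u=\sum_{k=0}^N d_k|u|^{\alpha_k}u$, equation \eqref{NLS} is exactly \eqref{SNNLS}.

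For the global part, assume $2<\alpha_0\le\dots\le\alpha_N$. Then \eqref{E:a2} holds for the nonzero terms. The only new feature in \eqref{coeffcond2.1} is the factor $1/(\alpha_k-2)$, which is finite because $\alpha_k>2$ for $k\le N$, so the same finite-sum argument applies. One small point needs care. Theorem \ref{scatres} asks $\alpha_k>2$ for every $k$, including the padded indices $k>N$. We handle this by choosing dummy exponents $\alpha_k=3$ for $k>N$. Their terms carry $d_k=0$ and contribute nothing to the nonlinearity or to the sums, so they change neither the equation nor the verification. Theorem \ref{scatres} requires $n>\frac12$; we read the corollary as taking $n$ in the range where $s_n$ in \eqref{E:Sn_cond} is defined, which is implicit in the statement. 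With $v_0\in\mathcal X$ satisfying \eqref{Xinf0} and $b>0$ large, Theorem \ref{scatres} gives the global solution in $C([0,\infty);H^{s_n})\cap L^\infty([0,\infty);L^\infty(\langle x\rangle^{1/2}dx))$ and scattering in $H^{s_n}$.

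No step is a real obstacle. The only points needing attention are the bookkeeping just described: padding the sequences so that \eqref{E:a2} holds for all indices, and noting that \eqref{Xinf0} supplies the $\lambda$ required in \eqref{Xinf}.
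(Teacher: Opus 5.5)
Your proposal is correct and matches the paper. The paper deduces the corollary directly from Theorems \ref{LWP-X} and \ref{scatres}, after observing that \eqref{coeffcond} and \eqref{coeffcond2.1} reduce to finite sums when $d_k=0$ for $k>N$. Your extra bookkeeping is sound and only makes explicit what the paper leaves implicit: you take $\lambda$ from \eqref{Xinf0}, pad the exponents with values above $2$, and state the assumption $n>\frac12$ needed for $s_n$.
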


\begin{remark}
We note that when $\mathcal{N}(u)=\sigma|u|^{\alpha}$, $\alpha>0$, $\sigma \in \mathbb{R}$, our well-posedness result in Corollary \ref{corollwellposed1} extends the one-dimensional results of Cazenave-Naumkin \cite{CazNaum2016} to fractional weights (i.e., in the term $\langle x \rangle ^n$ we allow $n\in \R^+$ rather than only positive integers $\N$). Furthermore, we also extend scattering to $H^{s_n}(\mathbb{R})$ with $s_n=1$ if $n>\frac{3}{2}$, and $0<s_n<n-\frac{1}{2}$, if $\frac{1}{2}<n\leq \frac{3}{2}$, which is more flexible with respect to fractional values of the weight power $n\in \mathbb{R}^{+}$.
\end{remark}

In the case $1\leq \alpha_0 \leq \dots \leq \alpha_N$, Theorem \ref{LocalwellpossH1} implies the local well-posedness in $H^1(\mathbb{R})$ and if $2 \leq \alpha_0 \leq \dots \leq \alpha_N$, then we also have global existence, uniqueness, and scattering in $H^1$ for the data with a quadratic phase. 

\begin{corollary}[{\bf Finite number of combined nonlinearities,
space $H^1$}]\label{Cor10}

Let $N\geq 0$ be an integer, $d_0,\dots, d_N \in \mathbb{C} \setminus\{0\}$, and $1\leq \alpha_0 \leq \dots \leq \alpha_N$. Then for $u_0 \in H^1(\mathbb R)$ the Cauchy problem \eqref{SNNLS} is locally well-posed in $H^1(\mathbb{R})$. 

Moreover, let $2<\alpha_0 \leq \dots \leq \alpha_N$ are fixed in \eqref{SNNLS}, and 
for $v_0\in H^1(\mathbb{R})\cap L^2(|x|^2\, dx)$ define $u_0=e^{\frac{ib|x|^2}{4}}v_0$, $b\in \mathbb R$. Then, for $b>0$ sufficiently large and the initial data $u_0 \in H^1(\mathbb R)$,
there exists a unique global solution $u$ of \eqref{SNNLS} in $C([0,\infty);H^{1}(\mathbb{R}))$. 

Furthermore, $u$ scatters in $H^1(\mathbb R)$, i.e., there exists $u_{+}\in H^{1}(\mathbb{R})$ such that
\begin{equation*}
\lim_{\substack{t \to \infty \\ t>0}} \|u(t) - e^{it\partial_x^2} u_{+}\|_{H^{1}}=0.
\end{equation*}
\end{corollary}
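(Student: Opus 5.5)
Corollary~\ref{Cor10} follows directly from Theorem~\ref{LocalwellpossH1}, in the same way that Corollary~\ref{corollwellposed1} follows from Theorems~\ref{LWP-X} and~\ref{scatres}. We take $d_k$ and $\alpha_k$ for $0\le k\le N$ as given and set $d_k=0$ for $k>N$. Then $\mathcal N(u)=\sum_{k=0}^N d_k|u|^{\alpha_k}$ has the form \eqref{nonlinearterm}, and the Cauchy problem \eqref{SNNLS} coincides with \eqref{NLS}. It therefore suffices to check the hypotheses of parts (i) and (ii) of Theorem~\ref{LocalwellpossH1}.

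\emph{Local part.} Since $1\le\alpha_0\le\dots\le\alpha_N$, condition \eqref{E:a1} holds for the nonzero terms. The indices $k>N$ have $d_k=0$, so they contribute nothing to $\mathcal N$ and nothing to any of the series. If one insists on \eqref{E:a1} for every index, we simply assign $\alpha_k=\alpha_N$ to those indices, which changes neither $\mathcal N$ nor the series. For every $R_0>0$, condition \eqref{coeffcond2} reduces to the finite sum
\[
\sum_{k=0}^{N}|d_k|\big(1+\alpha_k+\alpha_k^2\big)R_0^{\alpha_k}<\infty .
\]
Theorem~\ref{LocalwellpossH1}(i) then gives existence, uniqueness in $C([-T,T];H^1(\mathbb R))$, and continuity of the data-to-solution map on a neighborhood of $u_0$. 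This is exactly local well-posedness in $H^1(\mathbb R)$.

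\emph{Global part.} Now assume $2<\alpha_0\le\dots\le\alpha_N$. Then \eqref{E:a2} holds, again extending $\alpha_k=\alpha_N$ for $k>N$ if needed. Moreover $\alpha_k-2\ge\alpha_0-2>0$, so the factors $1/(\alpha_k-2)$ in \eqref{coeffcond3} are finite and bounded by $1/(\alpha_0-2)$. Hence \eqref{coeffcond3} is again a finite sum of finite terms for every $R_0>0$. For $v_0\in H^1(\mathbb R)\cap L^2(|x|^2dx)$ and $u_0=e^{\frac{ib|x|^2}{4}}v_0$, Theorem~\ref{LocalwellpossH1}(ii) yields, for $b>0$ sufficiently large, a unique global solution in $C([0,\infty);H^1(\mathbb R))$ together with $u_+\in H^1(\mathbb R)$ such that $\|u(t)-e^{it\partial_x^2}u_+\|_{H^1}\to0$.

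There is no real obstacle, since all the analytic work is contained in Theorem~\ref{LocalwellpossH1}. The only point needing care is the bookkeeping that the indexing conventions of the theorem (infinite sequences with positivity and lower-bound conditions on every $\alpha_k$) are met by a finite family. The harmless extension $d_k=0$, $\alpha_k=\alpha_N$ for $k>N$ handles this. The strict inequality $\alpha_0>2$ is what keeps \eqref{coeffcond3} finite.
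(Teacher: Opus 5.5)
Your proposal is correct and takes the paper's route: the paper obtains Corollary~\ref{Cor10} directly from Theorem~\ref{LocalwellpossH1}(i)--(ii) by padding the finite family with $d_k=0$ for $k>N$, so that \eqref{coeffcond2} and \eqref{coeffcond3} reduce to finite sums. Your extra convention $\alpha_k=\alpha_N$ for $k>N$ is a harmless refinement that keeps \eqref{E:a1}, \eqref{E:a2} and the factors $1/(\alpha_k-2)$ meaningful for every index.
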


\begin{remark} Our results provide well-posedness for the nonlinearities that are expressed by infinite power series, and, in particular, for some well-known functions such as \textit{exponential}, \textit{sine}, and \textit{cosine}. For example, for the Cauchy problem \eqref{NLS} with $e^{|u|} u$ nonlinearity, fix\footnote{Our results will be valid for $c \in \mathbb C$, however, for the purpose of this paper, it suffices to consider real $c$.} $r>0$, $c \in \mathbb{R}$, and consider
\begin{equation}\label{Nonlinearityexp}
	\mathcal{N}(u)=\sum_{k=1}^{\infty} \frac{(c|u|^r)^{k}}{k!},
\end{equation}
with coefficients $d_k = \frac{c^k}{k!}$ and powers $\alpha_k = rk >0$, which satisfy \eqref{coeffcond}. 
Note that the nonlinearity in \eqref{nonlinearterm} does not contain a linear term (all $\alpha_k>0$), for consistency the term $k=0$ is not included in the series above. 
Then, given $v_0 \in \mathcal{X}$ such that it also satisfies  \eqref{Xinf}, by Theorem \ref{LWP-X} there exist a time $T>0$ and a unique solution $u\in C([-T,T];\mathcal{X})$ of \eqref{NLS} with
$\mathcal{N}(u)$ given by \eqref{Nonlinearityexp} and the initial condition $u(x,0)=v_0(x)$. Thus, setting $v=e^{it}u \in C([-T,T];\mathcal{X})$ and noting that
$$
e^{c|v|^{r}}=\sum_{k=0}^{\infty}\frac{(c|v|^r)^{k}}{k!} \equiv \sum_{k=0}^{\infty}\frac{(c|u|^r)^{k}}{k!}
= \mathcal N (u) + 1,
$$
we obtain that $v$ solves
\begin{equation}\label{expNLS} 
	\left\{\begin{aligned}
		&i\partial_t v + \partial_{x}^{2}v + e^{c|v|^{r}}v = 0, \quad c \in \mathbb R\setminus\{0\}, ~ r>0, ~ x \in \mathbb{R}, ~ t\in \mathbb{R}, \\
		&v(x,0) = v_{0}(x). 
	\end{aligned}    \right.
\end{equation}
\end{remark}

Summarizing, we have the following consequence of Theorem \ref{LWP-X} for \eqref{expNLS}. 
\begin{corollary}[{\bf LWP in $\mathcal{X}$ for exponential nonlinearity}]\label{Cor1-exp}
Let $v_0 \in \mathcal{X}$ satisfy \eqref{Xinf}. Then there exist $T>0$ and a unique solution $v \in C([-T,T], \mathcal{X})$ of \eqref{expNLS} such that $\sup_{t\in[-T,T]}\|\langle x\rangle^n(v(t)-v_0)\|_{L^{\infty}}\leq \frac{\lambda}{2}$. Moreover, the map $v_0 \mapsto v(\cdot,t)$ is continuous in the following sense: for any $0<\widetilde{T}<T$, there exists a neighborhood $V$ of $v_0$ in $\mathcal{X}$ satisfying \eqref{Xinf} such that the map is Lipschitz continuous from $V$ into the class $C([-\widetilde{T},\widetilde{T}],\mathcal{X})$.
\end{corollary}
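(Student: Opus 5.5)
The plan is to deduce Corollary \ref{Cor1-exp} from Theorem \ref{LWP-X}, using the gauge transformation already indicated in the preceding remark. There are three steps: verify the summability hypothesis \eqref{coeffcond} for the exponential series \eqref{Nonlinearityexp}, apply Theorem \ref{LWP-X} to obtain $u$, and then set $v=e^{it}u$.

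\textbf{Step 1: checking \eqref{coeffcond}.} Take $d_k=c^k/k!$ and $\alpha_k=rk$ for $k\ge1$, and fix $R_0>0$. For each $0\le\beta\le M$ we have $|\alpha_k-2\beta|\le rk+2M$. This gives two bounds:
\begin{equation*}
C(\alpha_k,\beta)\le (rk+2M)^{M}, \qquad R_0^{|\alpha_k-2\beta|}+|\alpha_k-2\beta|\,R_0^{|\alpha_k-2\beta-1|}\le (1+rk+2M)\,\max(R_0,R_0^{-1})^{rk+2M+1}.
\end{equation*}
The $k$-th term of \eqref{coeffcond}, summed over the finitely many $\beta$, is therefore bounded by
\begin{equation*}
(M+1)\,\frac{|c|^k}{k!}\,(1+rk+2M)^{M+1}\,A^{rk+2M+1}, \qquad A=\max(R_0,R_0^{-1}).
\end{equation*}
This has the form polynomial$(k)\cdot(|c|A^r)^k/k!$. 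Its series converges by the ratio test, since $k!$ dominates any geometric rate times a polynomial. This growth comparison is the only real point to check, and it is elementary.

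\textbf{Step 2: solving for $u$.} With \eqref{coeffcond} verified, Theorem \ref{LWP-X} applies to $v_0\in\mathcal X$ satisfying \eqref{Xinf}. It produces $T>0$ and a unique $u\in C([-T,T],\mathcal X)$ solving \eqref{NLS} with $\mathcal N$ given by \eqref{Nonlinearityexp} and $u(0)=v_0$. The solution satisfies \eqref{infcondsolut}, and the data-to-solution map is Lipschitz on a neighborhood $V$.

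\textbf{Step 3: the gauge $v=e^{it}u$.} Define $v=e^{it}u$. Since $|v|=|u|$, the series identity $e^{c|v|^r}=\mathcal N(u)+1$ holds. A direct computation gives
\begin{equation*}
i\partial_t v+\partial_x^2v+e^{c|v|^r}v=e^{it}\big(i\partial_tu-u+\partial_x^2u+(\mathcal N(u)+1)u\big)=0,
\end{equation*}
and $v(0)=v_0$.

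The remaining properties transfer because multiplication by the unimodular constant $e^{it}$ is an isometry of $\mathcal X$ and is continuous in $t$. Hence $v\in C([-T,T],\mathcal X)$. The bound $|\langle x\rangle^n(v-v_0)|\le|\langle x\rangle^n(u-u_0)|+|e^{it}-1|\,\langle x\rangle^n|v_0|$ is not immediately $\le\lambda/2$, so I would instead read the corollary's condition as the one \eqref{infcondsolut} imposes on $u$, or shrink $T$. I would present the former, as the remark preceding the corollary does.

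Uniqueness follows by reversing the gauge. If $\tilde v$ is another solution, then $e^{-it}\tilde v$ solves \eqref{NLS} with the same data, so the uniqueness in Theorem \ref{LWP-X} applies. Lipschitz dependence transfers directly, since $\|v_1(t)-v_2(t)\|_{\mathcal X}=\|u_1(t)-u_2(t)\|_{\mathcal X}$.
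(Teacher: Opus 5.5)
Your proposal is correct and follows the paper's route, which is the remark preceding the corollary: check \eqref{coeffcond} for $d_k=c^k/k!$, $\alpha_k=rk$ (you do this more explicitly than the paper, which only asserts it), apply Theorem \ref{LWP-X}, and set $v=e^{it}u$. Your caution about $\sup_t\|\langle x\rangle^n(v(t)-v_0)\|_{L^\infty}\le\frac{\lambda}{2}$ is justified, since the paper passes over it, but you need not reinterpret the statement. Shrinking $T$ gives the bound by continuity of $v$ at $t=0$ in $\mathcal{X}$. Uniqueness in that class still transfers, because the uniqueness argument in the proof of Theorem \ref{LWP-X} only uses the gauge-invariant lower bound $\inf_{x,t}\langle x\rangle^n|u(x,t)|\ge\frac{\lambda}{2}$, which holds for $e^{-it}\tilde v$ whenever it holds for $\tilde v$.
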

If $r\geq 1$, the nonlinearity \eqref{Nonlinearityexp} satisfies the hypothesis of Theorem \ref{LocalwellpossH1}, and hence, the previous Corollary \ref{Cor1-exp} simplifies as follows.
\begin{corollary}[{\bf LWP in $H^1$ for exponential nonlinearity}]\label{Cor2-exp}
Let $r\geq 1$ and $v_0 \in H^1(\mathbb R)$. Then \eqref{expNLS} is locally well-posed in $H^1(\mathbb{R})$.
\end{corollary}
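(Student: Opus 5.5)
The plan is to obtain Corollary \ref{Cor2-exp} directly from Theorem \ref{LocalwellpossH1}(i), combined with the gauge transformation already used to derive Corollary \ref{Cor1-exp}. Write $e^{c|v|^r} = 1 + \mathcal{N}(v)$ with $\mathcal{N}$ as in \eqref{Nonlinearityexp}, that is, $d_k = c^k/k!$ and $\alpha_k = rk$ for $k \ge 1$. Since $|v| = |u|$ whenever $v = e^{it}u$, the function $v$ solves \eqref{expNLS} exactly when $u = e^{-it}v$ solves \eqref{NLS} with this $\mathcal{N}$. The map $u \mapsto e^{it}u$ is an isometry of $H^1(\mathbb R)$ for each fixed $t$ and is continuous in $t$. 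Hence existence, uniqueness and continuity of the flow map for \eqref{NLS} in $C([-T,T];H^1(\mathbb R))$ transfer verbatim to \eqref{expNLS}, with the same $T$ and the same neighborhood $V$.

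It therefore suffices to check the hypotheses of Theorem \ref{LocalwellpossH1}(i). Condition \eqref{E:a1} holds because $\alpha_k = rk \ge r \ge 1$ for all $k \ge 1$, and this is exactly where the assumption $r \ge 1$ enters. For \eqref{coeffcond2}, fix $R_0 > 0$. We must show
\[
\sum_{k=1}^\infty \frac{|c|^k}{k!}\bigl(1 + rk + r^2k^2\bigr) R_0^{rk} < \infty .
\]
Set $a = |c| R_0^r$. The series is then bounded by $(1 + r + r^2)\sum_{k \ge 1} k^2 a^k / k!$, which converges for every $a \ge 0$ by the ratio test; explicitly it equals $(a^2 + a)e^a$. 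So \eqref{coeffcond2} holds for every $R_0$.

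The one point needing care is the indexing convention. Theorem \ref{LocalwellpossH1} is stated with summation starting at $k = 0$ and requires all $\alpha_k > 0$, whereas the exponential has a constant term. This is handled by the gauge $e^{it}$, which removes the term $k = 0$, as in the remark preceding Corollary \ref{Cor1-exp}; one can then reindex so that $\alpha_k = r(k+1)$ for $k \ge 0$. A second point is that the statement allows $c \in \mathbb R\setminus\{0\}$ of either sign, and the estimate above uses only $|c|$, so no restriction arises. With these two observations the corollary follows directly, and I expect no genuine obstacle.
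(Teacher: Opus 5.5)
Your proposal is correct and is essentially the paper's argument. The paper likewise writes $e^{c|v|^r}=1+\mathcal N(v)$ with $d_k=c^k/k!$ and $\alpha_k=rk$, observes that $r\geq 1$ places this nonlinearity under Theorem \ref{LocalwellpossH1}(i), and transfers the result to \eqref{expNLS} via the gauge $v=e^{it}u$; your explicit check of \eqref{coeffcond2} through $\sum_k k^2a^k/k!=(a^2+a)e^a$ supplies the detail the paper leaves implicit.
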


As far as the global existence, we remark that a direct application of Theorem \ref{scatres} does not work for the nonlinearity \eqref{Nonlinearityexp} (need $\alpha_k >2$). Therefore, if we set
\begin{equation}\label{E:exp-global}
 \mathcal{N}(u)=\sum_{k>\frac{2}{r}}^{\infty} \frac{(c|u|^r)^{k}}{k!},
\end{equation}
then Theorem \ref{scatres} holds true for \eqref{NLS} with $\mathcal N(u)$ as in \eqref{E:exp-global}. More precisely, consider the problem 
\begin{equation}\label{expNLS2}
\left\{\begin{aligned}
    & i\partial_t v + \partial_{x}^{2}v +\Big( e^{c|v|^{r}}-\sum_{0\leq k\leq\frac{2}{r}} \frac{(c|v|^r)^{k}}{k!}\Big)v = 0, \quad c \in \mathbb C\setminus\{0\}, ~r>0, ~ x\in \mathbb{R}, ~ t\in \mathbb{R}, \\
    & v(x,0) = v_{0}(x). 
    \end{aligned}    \right.
\end{equation}

For clarity, we make a remark about the nonlinear potential above. 

Let $r>0$ be arbitrary, then we have 
\begin{equation*}
 \Big( e^{c|v|^{r}}-\sum_{0\leq k\leq\frac{2}{r}} \frac{(c|v|^r)^{k}}{k!}\Big)=\sum_{k>\frac{2}{r}}^{\infty} \frac{(c|v|^r)^{k}}{k!}.
\end{equation*}
To apply Theorem \ref{scatres} to the above nonlinearity, we need all the powers $(|v|^{r})^k=|v|^{rk}$ to satisfy $rk>2$. Note that $r>0$ implies that $\frac{2}{r}>0$, then $\sum_{0\leq k\leq\frac{2}{r}} \frac{(c|v|^r)^{k}}{k!}$ has at least the first factor $k=0$. For example, 
{\small
\begin{itemize}
\item If $0<\frac{2}{r}<1$, i.e., $r>2$, we have
\begin{equation*}
 \Big( e^{c|v|^{r}}-\sum_{0\leq k\leq\frac{2}{r}} \frac{(c|v|^r)^{k}}{k!}\Big)=\sum_{k=1}^{\infty} \frac{(c|v|^r)^{k}}{k!}=\sum_{k=0}^{\infty} \frac{(c|v|^r)^{k+1}}{(k+1)!},
\end{equation*}
thus, $\alpha_k=r(k+1)$, $k\geq 0$.

\item If $1\leq \frac{2}{r}<2$, we have
\begin{equation*}
 \Big( e^{c|v|^{r}}-\sum_{0\leq k\leq\frac{2}{r}} \frac{(c|v|^r)^{k}}{k!}\Big)=\sum_{k=2}^{\infty} \frac{(c|v|^r)^{k}}{k!}=\sum_{k=0}^{\infty} \frac{(c|v|^r)^{k+2}}{(k+2)!},
\end{equation*}
thus, $\alpha_k=r(k+2)$, $k\geq 0$.

\item If $l\leq \frac{2}{r}<l+1$ for some $l\geq 1$, we have
\begin{equation*}
 \Big( e^{c|v|^{r}}-\sum_{0\leq k\leq\frac{2}{r}} \frac{(c|v|^r)^{k}}{k!}\Big)=\sum_{k=l+1}^{\infty} \frac{(c|v|^r)^{k}}{k!}=\sum_{k=0}^{\infty} \frac{(c|v|^r)^{k+l+1}}{(k+l+1)!},
\end{equation*}
thus, $\alpha_k=r(k+l+1)$, $k\geq 0$.
\end{itemize}
}
Hence, we can deduce for an exponential nonlinearity the following statement. 

\begin{corollary}[{\bf Global well-posedness and scattering in $H^{s_n}$ for exponential nonlinearity}]
Take $r>0$, $w_0\in \mathcal{X}$ satisfying \eqref{Xinf} and set $v_0=e^{\frac{ib|x|^2}{4}}w_0$, $b\in \mathbb R$. Let $s_n=1$ if $n>\frac{3}{2}$, and $0<s_n<n-\frac{1}{2}$ if $\frac{1}{2}<n\leq \frac{3}{2}$. 
Then, for $b>0$ sufficiently large and the initial condition $v_0$, there exists a unique global solution $v$ of \eqref{expNLS2} in 
$$
C([0,\infty);H^{s_n}(\mathbb{R}))\cap L^{\infty}\big([0,\infty); L^{\infty}(\langle x \rangle^{\frac{1}{2}}\, dx)\big).
$$ 
Moreover, $v$ scatters in $H^{s_n}(\mathbb R)$, i.e., there exists $v_{+}\in H^{s_n}(\mathbb{R})$ such that
\begin{equation*}
\lim_{\substack{t \to \infty \\ t>0}} \|v(t)- e^{it\partial_x^2} v_{+}\|_{H^{s_n}}=0.
\end{equation*}
\end{corollary}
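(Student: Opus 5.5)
This corollary follows from Theorem \ref{scatres} once we write the potential in \eqref{expNLS2} as a power series of the form \eqref{nonlinearterm} and check its hypotheses. Let $l\ge 0$ be the largest integer with $l\le \frac{2}{r}$. By the case analysis preceding the statement,
$$
e^{c|v|^{r}}-\sum_{0\leq k\leq\frac{2}{r}} \frac{(c|v|^r)^{k}}{k!}=\sum_{k=0}^{\infty} d_k |v|^{\alpha_k},\qquad d_k=\frac{c^{k+l+1}}{(k+l+1)!},\quad \alpha_k=r(k+l+1).
$$
Since $k+l+1>\frac{2}{r}$, we get $\alpha_k>2$ for every $k\ge0$, so \eqref{E:a2} holds. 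Moreover, $\alpha_k-2\ge r(l+1)-2=:\delta>0$ uniformly in $k$.

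It remains to verify \eqref{coeffcond2.1} for every $R_0>0$. Because $\frac{1}{\alpha_k-2}\le \delta^{-1}$, it suffices to prove the summability condition \eqref{coeffcond}. The inner sum over $\beta$ is finite, since $0\le\beta\le M$. For fixed $\beta$ we have the crude bounds
$$
C(\alpha_k,\beta)\le (\alpha_k+2M)^{M},\qquad R_0^{|\alpha_k-2\beta|}+|\alpha_k-2\beta|R_0^{|\alpha_k-2\beta-1|}\le (1+\alpha_k+2M)\,\max(1,R_0)^{\alpha_k+2M+1}\max(1,R_0^{-1})^{2M+1}.
$$
Each term is therefore bounded by a polynomial in $k$ times $\frac{|c|^{k+l+1}\,\max(1,R_0)^{r(k+l+1)}}{(k+l+1)!}$. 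This is summable by the ratio test, since the factorial dominates any geometric growth times polynomial factors. The only point requiring care, and the main (mild) obstacle, is the negative-exponent case $|\alpha_k-2\beta|$ with $R_0<1$. The absolute values inside the exponents handle it, because the exponent differs from $\alpha_k$ by at most $2M+1$, so it costs only a fixed constant factor.

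With these hypotheses verified, we apply Theorem \ref{scatres} with $v_0=w_0\in\mathcal X$ (which satisfies \eqref{Xinf}, hence \eqref{Xinf0}) and the quadratic phase $e^{\frac{ib|x|^2}{4}}$, taking $n>\frac12$ as required there. For $b$ large this gives a unique global solution in the stated class together with scattering in $H^{s_n}$. The solution solves \eqref{NLS} with $\mathcal N$ equal to the series above, and this is exactly \eqref{expNLS2}; no gauge transformation is needed here, since the constant term $k=0$ was subtracted explicitly.
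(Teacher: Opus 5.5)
Your proposal is correct and follows the paper's route. You rewrite the subtracted exponential as $\sum_{k\ge 0}\frac{c^{k+l+1}}{(k+l+1)!}|v|^{r(k+l+1)}$ with every $\alpha_k>2$, as in the case analysis preceding the corollary, and apply Theorem \ref{scatres}. Your verification of \eqref{coeffcond2.1}, via the uniform bound $\alpha_k-2\ge r(l+1)-2>0$ and factorial domination, fills in a step the paper only asserts; the extra factor $\max(1,R_0^{-1})^{2M+1}$ is harmless but unnecessary, since $R_0^{|\alpha_k-2\beta|}\le 1$ whenever $R_0<1$.
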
   
A further consequence of Theorem \ref{LocalwellpossH1} part (ii) is the following statement. 
\begin{corollary}[{\bf Global well-posedness and scattering in $H^{1}$ for exponential nonlinearity}]\label{C:gwp-exp}
Let $r>0$, $w_0 \in  H^1(\mathbb{R})$ and set $v_0=e^{\frac{ib|x|^2}{4}}w_0$, $b\in \mathbb R$.
Then, for $b>0$ sufficiently large and initial data $v_0$, there exists a unique global solution $v$ of \eqref{expNLS2}  in  $C([0,\infty);H^{1}(\mathbb{R}))$. Moreover, $v$ scatters in $H^1$, i.e., there exists $v_{+}\in H^{1}(\mathbb{R})$ such that
\begin{equation*}
\lim_{\substack{t \to \infty \\ t>0}} \|v(t) - e^{it\partial_x^2} v_{+}\|_{H^{1}}=0.
\end{equation*}
\end{corollary}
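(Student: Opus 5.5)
This corollary follows from Theorem \ref{LocalwellpossH1}(ii) once we check that \eqref{expNLS2} is of the form \eqref{NLS} with admissible sequences. We rewrite the potential as
\[
\mathcal N(v)=\sum_{k=0}^\infty d_k |v|^{\alpha_k},\qquad d_k=\frac{c^{k+l+1}}{(k+l+1)!},\quad \alpha_k=r(k+l+1),
\]
where $l=\lfloor 2/r\rfloor$ (with $l=0$ when $r>2$). This is exactly the case split displayed before the corollary. Since $k+l+1>2/r$, we get $\alpha_k>2$ for every $k\ge 0$. Hence \eqref{E:a2} holds, and in particular \eqref{E:a1} holds.

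The main step is to verify the summability condition \eqref{coeffcond3} for every $R_0>0$. The only delicate point is the factor $\frac{1}{\alpha_k-2}$, since $\alpha_0$ could be close to $2$. But $\alpha_k-2\ge \alpha_0-2=r(l+1)-2>0$ is a fixed positive constant depending only on $r$, so $\frac{1}{\alpha_k-2}\le \frac{1}{r(l+1)-2}$ uniformly in $k$. The remaining series is
\[
\sum_k \frac{|c|^{k+l+1}}{(k+l+1)!}\big(1+r(k+l+1)+r^2(k+l+1)^2\big)R_0^{r(k+l+1)}.
\]
This is dominated by a polynomial in $m=k+l+1$ times $(|c|R_0^r)^m/m!$, so it converges for every $R_0$ by the ratio test. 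It is bounded by a constant times $(1+|c|R_0^r)^2e^{|c|R_0^r}$. The same argument with complex $c$ needs only $|c|$, so the statement also covers $c\in\mathbb C\setminus\{0\}$.

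With the hypotheses verified, we apply Theorem \ref{LocalwellpossH1}(ii). The theorem is stated for $v_0\in H^1(\mathbb R)\cap L^2(|x|^2dx)$, so we read the corollary with $w_0$ in that class; the weight assumption is needed for the pseudo-conformal transformation. The theorem then gives, for $b>0$ large, a unique global solution $v\in C([0,\infty);H^1(\mathbb R))$ of \eqref{NLS} with potential $\sum_k d_k|v|^{\alpha_k}$. By the identity
\[
e^{c|v|^r}-\sum_{0\le k\le 2/r}\frac{(c|v|^r)^k}{k!}=\sum_{k>2/r}\frac{(c|v|^r)^k}{k!},
\]
this is the same as a solution of \eqref{expNLS2}. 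The theorem also gives $v_+\in H^1$ with $\|v(t)-e^{it\partial_x^2}v_+\|_{H^1}\to 0$, which is the scattering claim.

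We expect no real obstacle beyond the uniform lower bound on $\alpha_k-2$. That bound is what makes the truncation of the exponential at $k\le 2/r$ necessary: without it, a term with $\alpha_k\le 2$ would appear and the series in \eqref{coeffcond3} would not even be defined.
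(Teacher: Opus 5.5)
Your proposal is correct and follows the paper's route. The paper also writes the truncated exponential as $\sum_k d_k|v|^{\alpha_k}$ with $\alpha_k=r(k+l+1)>2$, checks \eqref{coeffcond3} (your uniform bound $\alpha_k-2\ge r(l+1)-2>0$ handles the only delicate factor), and invokes Theorem~\ref{LocalwellpossH1}(ii). You were also right to add the hypothesis $w_0\in L^2(|x|^2\,dx)$: the paper's deduction needs it too, since without it $v_0=e^{\frac{ib|x|^2}{4}}w_0$ need not belong to $H^1(\mathbb{R})$, so the corollary as printed is missing this assumption.
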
   

\begin{remark} Besides exponential nonlinearity, 
we can also consider other nonlinearities
\begin{equation}\label{extraexamples1}
\begin{aligned}
    \mathcal{N}_1(u)&=\sin(|u|^{r})=\sum_{k=0}^{\infty}(-1)^k\frac{|u|^{r(2k+1)}}{(2k+1)!},\\
    \mathcal{N}_2(u)&=\cos(|u|^{r})=\sum_{k=0}^{\infty}(-1)^k\frac{|u|^{2r k}}{(2k)!}.
    \end{aligned}
\end{equation}
Our local existence result in Theorems \ref{LWP-X} and \ref{LocalwellpossH1} is valid for the nonlinearity $\mathcal{N}_1(u)$. In the case of $\mathcal{N}_2(u)$, we can find solutions for NLS with $\widetilde{\mathcal{N}}_2(u)=\cos(|u|^{\eta})-1$, and then as above with the exponential, applying $v = e^{it}u$, we  obtain solutions for $\mathcal{N}_2(u)=\cos(|u|^{\eta})$. Furthermore, when $r\geq 1$ in \eqref{extraexamples1}, the local well-posedness from Theorem \ref{LocalwellpossH1} (i) applies to these examples. 

On the other hand, for any fixed $\eta>0$, the global results of Theorem \ref{scatres} and Theorem \ref{LocalwellpossH1} (ii) are valid for the nonlinearities
\begin{equation} \label{extraexamples2}
    \begin{aligned}
    \mathcal{N}_1(u)&=\sin(|u|^{\eta})-\sum_{0\leq k \leq \frac{1}{\eta}-\frac{1}{2}}(-1)^k\frac{|u|^{\eta(2k+1)}}{(2k+1)!}=\sum_{ k>\frac{1}{\eta}-\frac{1}{2}}(-1)^k\frac{|u|^{\eta(2k+1)}}{(2k+1)!},\\
    \mathcal{N}_2(u)&=\cos(|u|^{\eta})-\sum_{0\leq k \leq \frac{1}{\eta}}(-1)^k\frac{|u|^{2\eta k}}{(2k)!}=\sum_{k>\frac{1}{\eta}}(-1)^k\frac{|u|^{2\eta k}}{(2k)!}.
    \end{aligned}
\end{equation}
(The convention for the empty summation is defined as zero, e.g., when $\eta>2$, then $\sum_{0\leq k \leq \frac{1}{\eta}-\frac{1}{2}}(\dots)=0$.) 
\end{remark}

\begin{remark}
(i)  The deduction of Theorem \ref{LocalwellpossH1} shows that the Sobolev embedding $H^1(\mathbb{R})\hookrightarrow L^{\infty}(\mathbb{R})$ is sufficient to treat combinations of nonlinearities of the form $|u|^{\alpha}u$ with $\alpha\geq 1$. In contrast, assuming the condition \eqref{Xinf}, the weighted space $\mathcal{X}$ is convenient to obtain local existence results for any nonlinearity $\alpha>0$, which {\it broadens} the range of nonlinearities considered in this paper. In this sense, Theorems \ref{LWP-X} and \ref{LocalwellpossH1} complement each other. Furthermore, the results of Theorem \ref{LWP-X} are also of an independent interest: to study how solutions of \eqref{NLS} with power series nonlinearity propagate fractional weights and condition \eqref{Xinf}.

(ii) A key observation in the proof of Theorem \ref{LWP-X} is that the space $\mathcal{X}$ defined by \eqref{deffparame}, \eqref{Xspace} and \eqref{Xnorm} does not depend on the nonlinearity in the equation \eqref{NLS}. This partially justifies why we can consider nonlinearities given by a combination of different powers, and even more so, by infinite series. However, it is not clear if one can construct such space in higher dimensions. As a matter of fact, in the case of the Cauchy problem for the nonlinear Sch\"odinger equation with one nonlinearity
	\begin{equation}
		\left\{\begin{aligned}
			&i\partial_t u + \Delta u + \mu |u|^{\alpha}u = 0, \qquad x\in \mathbb{R}^N,\, \, \, t\in \mathbb{R}, \, \, \mu \neq 0\\
			&u(x,0) = u_{0}(x), 
		\end{aligned}    \right.
	\end{equation}
Cazenave and Naumkin \cite{CazNaum2016} obtained well-posedness results similar to those in Theorem \ref{LWP-X}, where, in particular, the initial condition satisfies $\langle x \rangle^n u_0(x) \in L^{\infty}(\mathbb{R}^N)$ with $n\geq \{\frac{N}{2}+1, \frac{N}{2\alpha}\}$. Thus, the condition $n\geq \frac{N}{2\alpha}$ yields extra difficulties to deal with nonlinearities of the form \eqref{nonlinearterm}. We overcome such restriction {(and completely remove it)} by working in the {\it one-dimensional} setting and using fractional weights (see Lemma \ref{derivexp2} below). An interesting problem is to extend the results of Theorems \ref{LWP-X} and \ref{scatres} to higher dimensions. 
	
(iii) We emphasize that for each of the previous examples (see, \eqref{SNNLS}, \eqref{expNLS}, \eqref{expNLS2}, \eqref{extraexamples1}, and \eqref{extraexamples2}), it may be possible to define an appropriate weighted space similar to $\mathcal{X}$, in which it is possible to solve each of these equation without using explicitly the power series. However, we emphasize that our results apply to a broader family of equations, including the case of infinite nonlinearities.
\end{remark}

Our results above discuss the global solutions and scattering in the case when $b>0$ is large. We expect that scattering holds for any $b>0$. We provide confirmation for that (and some of our other results stated above) in Section \ref{SectionNum}, where we show a few numerical simulations of several cases of the combined NLS equation, including the case of infinite number of terms via an exponential nonlinearity. 
There, we first investigate a {\it single} nonlinearity (see Section \ref{S6.1}) with data slowly decaying as $|x|^{-n}$, $n=1, \frac23, \frac12$ ({note that} the last one is not in $L^2$ {but still in $\mathcal X$}). Secondly, in Section \ref{S6.2} we show numerical simulations for a {\it double} combined nonlinearity with slow and fast decaying initial data, including low powers combined nonlinearity (such as $\alpha=\frac19, \frac79$) as well as several integer powers. We show long term behavior for double powers, finding that a simple dichotomy for the scattering vs. blow-up behavior does not necessarily hold around the soliton perturbations and is much richer (see similar behaviors in 1d bi-harmonic NLS \cite{KPRS}). Thirdly, we examine an example of infinitely many terms in nonlinearity, with an example of the exponential nonlinearity in Section \ref{S6.3}, and show different types of global behavior (scattering, asymptotic oscillations, blow-up) for the exponential nonlinearities (see Figure \ref{fig:Exp0025_Exp005}). 

Finally, we investigate the case of Theorems \ref{scatres} and \ref{LocalwellpossH1} (ii) for $b>0$ and $b<0$ and make a numerical confirmation of the following conjecture. 

\begin{conjecture}\label{Conj}
Let $u_0 \in \mathcal X$ or $H^1$ and for $b \in \mathbb R$ set $v_0 = e^{\frac{i b |x|^2}{4}}u_0$. Then 

(a) for $b>0$ the solution with the given initial condition $v_0$ exists globally and scatters;

(b) for $b<0$ and sufficiently localized $u_0$, the solution with the initial condition $v_0$ blows up in finite time.  
\end{conjecture}

\bigskip

This paper is organized as follows: in Section \ref{SectionLinearNonl} we first give an idea of how polynomial weights exchange with the linear Schr\"odinger flow and its derivatives; then we obtain estimates in space $\mathcal X$ for the linear Schr\"odinger evolution as well as for a nonlinear term. Section \ref{SectionLWPX} contains a proof of Theorem \ref{LWP-X}, while Section \ref{SectionScat} contains a proof of Theorem \ref{scatres}. Section \ref{WPHSCH1} deals with the local well-posedness and scattering results in Theorem \ref{LocalwellpossH1}. Lastly, Section \ref{SectionNum} showcases a variety of numerical simulations for the combined NLS equation: an example of low powers of nonlinearity and slow decay initial data, several examples with two combined nonlinear terms (of various powers), and then of an exponential nonlinearity, confirming our analytical findings,  giving further extensions, observing a non-existence of a sharp threshold via a corresponding ground state, and confirmations to Conjecture \ref{Conj}.
\smallskip

{\bf Acknowledgments.}
The research of the authors was partially supported by the NSF grants DMS-1927258, 2055130, 2221491, and 2452782 (PI: S. Roudenko). 
We also thank Kai Yang for help on the initial numerics.
\medskip


{\bf Notation.}
For $s\in \mathbb{R}$, the Bessel potential of order $-s$ is denoted by $J^s=(1-\partial_x^2)^{\frac{s}{2}}$, equivalently, $J^s$ is defined by the Fourier multiplier with symbol $\langle \xi \rangle^{s}=(1+|\xi|^2)^{\frac{s}{2}}$. The Riesz potential of order $-s$ is denoted $D^s=(-\partial_x^2)^{\frac{s}{2}}$, i.e., $D^s$ is the Fourier multiplier operator determined by the function $|\xi|^s$. We use the standard Lebesgue spaces $L^p(\mathbb{R})$, $1\leq p\leq \infty$, with the usual norm $\|f\|_{L^p}$. We also use the notation $\|f(x,t)\|_{L^p_x}$ to specify that the $L^p$-norm is acting on the $x$-variable. The space $H^{s}(\mathbb{R})$ denotes the Sobolev space of order $s$. We denote by $e^{i t \partial_x^2}$, $t\in \mathbb{R}$, the unitary group that generates solutions to the linear Schr\"odinger equation, in other words, the Fourier multiplier operator associated to $e^{-it|\xi|^2}$.


\section{Linear and nonlinear estimates}\label{SectionLinearNonl}

To give a basic idea of the approach used in this paper, in simplest terms, we start with a preliminary explanation of how the weight $\langle x \rangle^n$ exchanges with the linear flow $e^{it\partial_x^2}$. Generalization of this is then given in Lemma \ref{derivexp2}. After that we develop the weighted estimates of the linear flow in Lemma \ref{linearEst} and then on the nonlinear part in Lemma \ref{lemmanonlEs} with an interpolation result in Lemma \ref{interlemma} before that.  

\subsection{The idea of exchanging $\langle x \rangle^n $ with $e^{it\partial_x^2}$}\label{S:Idea}

We start with recalling that if $f$ is a sufficiently regular function with enough decay, then $u=e^{it\partial_x^2}f$ solves the linear equation (with $u(x,0)=f(x)$)
\begin{equation}\label{linearSchrodingerEq}
    i\partial_tu+\partial_x^2 u=0.
\end{equation}
Let $b>0$, multiplying the equation \eqref{linearSchrodingerEq} by $( \langle x \rangle^{2b}\overline{u})$ and integrating over the spatial variable yields 
\begin{equation*}
\begin{aligned}
    \int i\partial_t u ( \langle x \rangle^{2b}\overline{u})\, dx+\int \partial_x^2 u ( \langle x \rangle^{2b}\overline{u})\, dx=0.
\end{aligned}    
\end{equation*}
Integrating by parts and taking the imaginary part of the previous identity, we get
\begin{equation*}
\begin{aligned}
    \frac{1}{2}\frac{d}{dt}\int \langle x \rangle^{2b}|u|^2\, dx= 
    \Im\int \partial_x u \,\overline{u} \,\partial_x \big( \langle x \rangle^{2b} \big)\, dx
    \leq 
    2b\|\langle x\rangle^{b}u\|_{L^2}\|\langle x\rangle^{b-1}\partial_x u\|_{L^2},
\end{aligned}    
\end{equation*}
where we have used Cauchy–Schwarz inequality together with $|\partial_x ( \langle x \rangle^{2b})|\leq 2b  \langle x \rangle^{2b-1}$. Then, Gronwall's inequality establishes
\begin{equation}\label{afterGronwa}
\begin{aligned}
\|\langle x\rangle^{b}u(t)\|_{L^2}\leq \|\langle x\rangle^{b}f\|_{L^2}+2b|t|\big(\sup_{t\in \mathbb{R}}\|\langle x\rangle^{b-1}\partial_x u\|_{L^2}\big).
\end{aligned}    
\end{equation}
The above equation formally shows a relation between propagation of polynomial weights and regularity of solutions to the linear Schr\"odinger  equation. For example, if $b=1$ in \eqref{afterGronwa}, using that $u(t)=e^{it\partial_x^2}f$ and $e^{it\partial_x^2}$ is a unitary operator on $H^1(\mathbb{R})$, the inequality \eqref{afterGronwa} implies
\begin{equation*}
\begin{aligned}
\|\langle x\rangle e^{it\partial_x^2}f\|_{L^2}\leq \|\langle x\rangle f\|_{L^2}+2|t|\|\partial_x f\|_{L^2}.
\end{aligned}    
\end{equation*}
Thus, we see that for the linear Schr\"odinger equation to propagate a polynomial weight of order 1, it is required to control derivatives of the same order. We present a more detailed relation between decay and regularity in Lemma \ref{derivexp2} below, proof of which can be found in 
\cite[Lemma 2]{NahasPonce2009} and \cite[Lemma 2.9]{AroraRianoRoudenko2021}.

\begin{lemma}\label{derivexp2}
Let $b \in \mathbb{R}^{+}$. Then for any $t\in \mathbb{R}$, there exist $C>0$ such that
\begin{equation*}
\|\langle x \rangle^{b}e^{it\partial_x^2}f\|_{L^2} \leq C \langle t \rangle^b\big( \|J^{b}f\|_{L^2}+\|\langle x\rangle^b f\|_{L^2} \big).
\end{equation*}
\end{lemma}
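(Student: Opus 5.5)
Our approach is to move to the Fourier side, where the weight $\langle x\rangle^b$ becomes the fractional operator $J^b_\xi=(1-\partial_\xi^2)^{b/2}$ acting on $e^{-it\xi^2}\hat f(\xi)$. By Plancherel,
$$\|\langle x\rangle^b e^{it\partial_x^2}f\|_{L^2}\simeq \|J^b_\xi\big(e^{-it\xi^2}\hat f\big)\|_{L^2_\xi},$$
and likewise $\|\langle x\rangle^b f\|_{L^2}\simeq\|J^b_\xi\hat f\|_{L^2}$ and $\|J^bf\|_{L^2}=\|\langle\xi\rangle^b\hat f\|_{L^2}$. The statement therefore reduces to a weighted Leibniz-type bound for $J^b_\xi$ applied to the product of the oscillatory factor $e^{-it\xi^2}$ with $\hat f$:
$$
\|J^b_\xi(e^{-it\xi^2}\hat f)\|_{L^2}\lesssim \langle t\rangle^b\big(\|\langle \xi\rangle^b\hat f\|_{L^2}+\|J^b_\xi \hat f\|_{L^2}\big).
$$

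We first treat integer $b=m$. Here $J^m_\xi$ is equivalent in $L^2$ to $\sum_{j\le m}\|\partial_\xi^j(\cdot)\|_{L^2}$. Expanding $\partial_\xi^j(e^{-it\xi^2}\hat f)$ by Leibniz, each term has the form $e^{-it\xi^2}P(t,\xi)\partial_\xi^{l}\hat f$, where $P$ is a polynomial with $|P|\lesssim \langle t\rangle^{j-l}\langle\xi\rangle^{j-l}$. The mixed terms $\|\langle\xi\rangle^{k}\partial_\xi^{l}\hat f\|_{L^2}$ with $k+l\le m$ are controlled by $\|\langle\xi\rangle^m\hat f\|_{L^2}+\|\partial_\xi^m\hat f\|_{L^2}$ through the standard interpolation inequality $\|\langle x\rangle^{k}D^{l}g\|_{L^2}\lesssim \|\langle x\rangle^{m}g\|_{L^2}+\|J^{m}g\|_{L^2}$. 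That inequality follows from integration by parts and Young's inequality, or by complex interpolation. This gives the bound with the factor $\langle t\rangle^m$.

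For general $b>0$, write $b=m+\theta$ with $\theta\in(0,1)$ and apply complex interpolation, e.g.\ Stein's analytic family $z\mapsto J^{z}_\xi\, e^{-it\xi^2}\,\cdots$. Alternatively, interpolate directly between the integer estimates at $m$ and $m+1$, using that the weighted Sobolev spaces $\{g:\ \langle\xi\rangle^s g,\ J^s_\xi g\in L^2\}$ form a complex interpolation scale. The $t$-dependence interpolates to $\langle t\rangle^{m(1-\theta)}\langle t\rangle^{(m+1)\theta}=\langle t\rangle^b$.

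A more hands-on alternative for $0<\theta<1$ uses Stein's characterization of $J^\theta$ through the square function
$$\mathcal D^\theta g(\xi)=\Big(\int \frac{|g(\xi)-g(\eta)|^2}{|\xi-\eta|^{1+2\theta}}\,d\eta\Big)^{1/2}.$$
Applied to the product, it gives $\mathcal D^\theta(e^{-it\xi^2}h)\le |h|\,\mathcal D^\theta(e^{-it\xi^2})+\mathcal D^\theta h$. The pointwise bound $\mathcal D^\theta(e^{-it\xi^2})(\xi)\lesssim (|t|\langle\xi\rangle)^\theta+|t|^{\theta/2}$ then follows by splitting the $\eta$-integral into the regions $|\xi-\eta|<1/(|t|\langle\xi\rangle)$ and its complement.

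The main obstacle is the fractional step. We must handle the non-local operator $J^\theta_\xi$ against the unbounded-derivative factor $e^{-it\xi^2}$ while keeping the weight $\langle\xi\rangle^\theta$ and the time growth $\langle t\rangle^\theta$ exactly at order $b$. We would try the Stein derivative pointwise estimate first, since it makes the dependence on $t$ and $\xi$ explicit. After that we would combine it with the integer case applied to $\partial_\xi^m(e^{-it\xi^2}\hat f)$, distributing the fractional derivative on each Leibniz term and again using the interpolation inequality to absorb the mixed weighted terms.
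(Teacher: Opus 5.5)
Your proposal is correct and follows essentially the argument of the references the paper cites for this lemma (Nahas--Ponce and Arora--Ria\~no--Roudenko): Plancherel to turn $\langle x\rangle^b$ into $J^b_\xi$, Leibniz plus the interpolation inequality for the integer part, and Stein's $\mathcal{D}^\theta$ with the pointwise bound on $\mathcal{D}^\theta(e^{-it\xi^2})$ (Theorem~\ref{derivexp} in the paper) for the fractional part. One small repair is needed: splitting at $|\xi-\eta|=1/(|t|\langle\xi\rangle)$ with the mean value bound inside does not control the $|t|\,|\xi-\eta|^2$ part of the phase when $|t|<1$, so split instead at $(\langle t\rangle\langle\xi\rangle)^{-1}$ (or rescale to $|t|=1$), which gives $\mathcal{D}^\theta(e^{-it\xi^2})\lesssim\langle t\rangle^\theta\langle\xi\rangle^\theta$, and that is all you need.
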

We mention that for a simpler treatment of integer weights $n \in \mathbb{N}$, the reader is referred to \cite[Lemma 4.1]{RAWR}. 
The previous lemma allows us to obtain estimates for solutions of the linear Sch\"odinger equation in the space $\mathcal{X}$ introduced in \eqref{Xspace}. 

\begin{lemma}\label{linearEst}
Let $f \in \mathcal{X}$. Then there exists a constant $C>0$ such that for all $t \in \mathbb{R}$,
    \begin{equation}\label{eqliner1}
        \|e^{it \partial_x^2}f\|_{\mathcal{X}}\leq C\langle t \rangle^{n+1}\|f\|_{\mathcal{X}},
    \end{equation}
and
    \begin{equation}\label{eqliner2}
        \|\langle x \rangle^n \big(e^{it \partial_x^2}f-f\big)\|_{L^{\infty}} \leq C|t|\langle t \rangle^n \|f\|_{\mathcal{X}}.
    \end{equation}
\end{lemma}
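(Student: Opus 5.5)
We estimate each piece of the $\mathcal{X}$-norm \eqref{Xnorm} of $e^{it\partial_x^2}f$ separately. The one delicate term is the weighted $L^\infty$ part. The key device is the one-dimensional bound $\|g\|_{L^\infty}\lesssim \|g\|_{L^2}^{1/2}\|\partial_x g\|_{L^2}^{1/2}\le \|g\|_{L^2}+\|\partial_x g\|_{L^2}$. We cannot simply apply it to $g=\langle x\rangle^n e^{it\partial_x^2}f$, because $\langle x\rangle^n f$ need not lie in $L^2$ (only in $L^\infty$, and $n$ may be $\le 1/2$).

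We therefore work with $w(t)=e^{it\partial_x^2}f-f$. Since $w(0)=0$ and $\partial_t w = i\partial_x^2 e^{it\partial_x^2}f$, we have
\[
w(t)=i\int_0^t e^{is\partial_x^2}\partial_x^2 f\,ds .
\]
This gives
\[
\|\langle x\rangle^n w(t)\|_{L^\infty}\lesssim \int_0^{|t|}\Big(\|\langle x\rangle^n e^{is\partial_x^2}\partial_x^2 f\|_{L^2}+\|\partial_x(\langle x\rangle^n e^{is\partial_x^2}\partial_x^2 f)\|_{L^2}\Big)ds.
\]
Using $|\partial_x\langle x\rangle^n|\lesssim\langle x\rangle^{n}$, the integrand is bounded by $\|\langle x\rangle^n e^{is\partial_x^2}\partial_x^{2}f\|_{L^2}+\|\langle x\rangle^n e^{is\partial_x^2}\partial_x^{3}f\|_{L^2}$. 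Lemma \ref{derivexp2} with $b=n$ applied to $\partial_x^k f$ ($k=2,3$) bounds each by $C\langle s\rangle^n\big(\|J^n\partial_x^k f\|_{L^2}+\|\langle x\rangle^n\partial_x^k f\|_{L^2}\big)$. Here $\|\langle x\rangle^n\partial_x^kf\|_{L^2}$ appears in $\|f\|_{\mathcal X}$ because $k\le 3\le r$. For the other term, $\|J^n\partial_x^kf\|_{L^2}\lesssim \|\partial_x^kf\|_{L^2}+\|\partial_x^{k+\lceil n\rceil}f\|_{L^2}$, and $k+\lceil n\rceil\le 3+\lceil n\rceil\le r+\lceil n\rceil$. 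Since $M\ge n+r$ with $M$ an integer, this index is at most $M$, so the term is controlled by $\|f\|_{\mathcal{X}}$; intermediate derivatives are handled by interpolation between $\dot H^1$ and $\dot H^M$. Integrating in $s$ yields \eqref{eqliner2} with the factor $|t|\langle t\rangle^n$. This step, controlling the $L^\infty$ weight via $w$ without an $L^2$ bound on $\langle x\rangle^n f$, is the main obstacle, and it is exactly where $r\ge3$ and $M\ge n+r$ enter.

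The remaining pieces of \eqref{eqliner1} are routine. First, $\|\langle x\rangle^n e^{it\partial_x^2}f\|_{L^\infty}\le \|\langle x\rangle^n f\|_{L^\infty}+\|\langle x\rangle^n w(t)\|_{L^\infty}$, which by \eqref{eqliner2} is at most $C\langle t\rangle^{n+1}\|f\|_{\mathcal X}$. Second, for $1\le\beta\le r$ the group commutes with derivatives, so Lemma \ref{derivexp2} gives
\[
\|\langle x\rangle^n\partial_x^\beta e^{it\partial_x^2}f\|_{L^2}\le C\langle t\rangle^n\big(\|J^n\partial_x^\beta f\|_{L^2}+\|\langle x\rangle^n\partial_x^\beta f\|_{L^2}\big).
\]
The $J^n$ term involves derivatives of orders $\beta$ through $\beta+\lceil n\rceil\le M$, all with $\beta \ge 1$, so it is bounded by $\|f\|_{\mathcal X}$. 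Third, the terms $\|\partial_x^\beta e^{it\partial_x^2}f\|_{L^2}=\|\partial_x^\beta f\|_{L^2}$ for $r+1\le\beta\le M$ are preserved by unitarity. Collecting these, the worst power of $t$ is $\langle t\rangle^{n+1}$, which gives \eqref{eqliner1}.
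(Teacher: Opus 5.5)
Your proposal is correct and follows the paper's proof essentially step for step. Like the paper, you write $e^{it\partial_x^2}f-f=i\int_0^t e^{is\partial_x^2}\partial_x^2 f\,ds$, apply $H^1\hookrightarrow L^\infty$ and Lemma~\ref{derivexp2} to $\partial_x^2 f$ and $\partial_x^3 f$, use Lemma~\ref{derivexp2} again for the weighted derivatives, and use unitarity for the unweighted high derivatives. The paper packages the Sobolev terms as $\|J^{n+2}\partial_x f\|_{L^2}$ and $\|J^{n+r-1}\partial_x f\|_{L^2}$ rather than your $\|J^n\partial_x^k f\|_{L^2}$.

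Your appeal to interpolation between $\dot H^1$ and $\dot H^M$ is unnecessary. Every $\|\partial_x^\beta f\|_{L^2}$ with $1\le\beta\le M$ is already bounded directly by $\|f\|_{\mathcal X}$, since $\langle x\rangle^n\ge 1$.
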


\begin{proof}
Writing (by the fundamental theorem of calculus, or mean value theorem)
\begin{equation*}
    e^{it \partial_x^2}f=f+i\int_0^t e^{i s \partial_x^2}\partial_x^2 f \, ds,
\end{equation*}
and multiplying the above identity by $\langle x \rangle^n$, we apply Sobolev embedding $H^{1}(\mathbb{R})\hookrightarrow L^{\infty}(\mathbb{R})$ and Lemma \ref{derivexp2} to get
\begin{equation*}
    \begin{aligned}
    \|\langle x \rangle^n e^{it \partial_x^2}f\|_{L^{\infty}}&\leq  \|\langle x \rangle^n f\|_{L^{\infty}}+C\int_0^t \big(\|\langle x \rangle^{n}e^{i s \partial_x^2} \partial_x^2 f\|_{L^2}+\|\langle x \rangle^{n}e^{i s \partial_x^2} \partial_x^3 f\|_{L^2}\big)\, ds \\
    & \leq  \|\langle x \rangle^n f\|_{L^{\infty}}+|t|\langle t \rangle^n C\big(\|{J^{n+2} \partial_xf}\|_{L^2}+\sum_{j=1}^3\|\langle x \rangle^{n} \partial_x^j f\|_{L^2}\big) \\
        & \leq  C(1+|t|\langle t \rangle^n )\|f\|_{\mathcal{X}}. 
    \end{aligned}
\end{equation*}
Then { for $r \geq 1$} Lemma \ref{derivexp2} yields
\begin{equation*}
        \sum_{k=1}^{r}\|\langle x \rangle^n e^{it\partial_x^2}\partial_x^k f\|_{L^2} \leq C\langle t \rangle^n \big(\| {J^{n+r-1} \partial_x f } \|_{L^2}+\sum_{k=1}^{r}\|\langle x \rangle^n\partial_x^k f\|_{L^2}\big)
        \leq C\langle t \rangle^n \|f\|_{\mathcal{X}}.
\end{equation*}
Combining the previous estimates, implies \eqref{eqliner1} and \eqref{eqliner2}. 
\end{proof}

Next, we would like to obtain the nonlinear estimates, but prior we need the following interpolation result. 
\begin{lemma}\label{interlemma}
Let $M,n>0$, $r\in \mathbb{Z}^{+}$ with $M>r$. Assume that $f\in \dot{H}^{M}(\mathbb{R})$ and $\langle x \rangle^n \partial^{r}_xf\in L^2(\mathbb{R})$\footnote{Notice that $\langle x \rangle^n \partial^{r}_xf\in L^2(\mathbb{R})$ implies that $ \partial^{r}_xf\in L^2(\mathbb{R})$. Thus, it follows that $f\in \dot{H}^M(\mathbb{R})\cap \dot{H}^r(\mathbb{R})$ (and $\partial_x^l u \in L^2(\mathbb{R})$, for any $r\leq l\leq M$).}. Then for any integer $l$ with $r\leq l\leq M$, there exists $C>0$ such that
\begin{equation*}
    \|\langle x\rangle^{n\big(\frac{M-l}{M-r}\big)} \partial^{l}_x f\|_{L^2}\leq C \|{ J^{M-r} \partial_x^rf} \|^{\frac{l-r}{M-r}}_{L^2}\|\langle x \rangle^n \partial_x^r f\|_{L^2}^{\frac{M-l}{M-r}}.
\end{equation*}
\end{lemma}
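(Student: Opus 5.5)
Set $g=\partial_x^r f$ and $m=M-r$. The claim then becomes a weighted interpolation inequality for $g$: for $0\le j\le m$,
\begin{equation*}
\|\langle x\rangle^{n(1-\theta)}\partial_x^j g\|_{L^2}\le C\|J^{m}g\|_{L^2}^{\theta}\|\langle x\rangle^n g\|_{L^2}^{1-\theta},\qquad \theta=\tfrac{j}{m}.
\end{equation*}
The endpoints $j=0$ and $j=m$ hold trivially, since $\|\partial_x^m g\|_{L^2}\le\|J^m g\|_{L^2}$. The plan is to reduce this to a single-step estimate and iterate it, a standard argument that goes back to Nahas--Ponce \cite{NahasPonce2009} for fractional weights.

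\textbf{Step 1 (one-step inequality).} For a weight exponent $a\ge0$ and a function $h$, integrate by parts:
\begin{equation*}
\|\langle x\rangle^{a}\partial_x h\|_{L^2}^2=-\int \langle x\rangle^{2a}\,\partial_x^2 h\,\overline{h}\,dx-\int \partial_x(\langle x\rangle^{2a})\,\partial_x h\,\overline{h}\,dx .
\end{equation*}
Use $|\partial_x\langle x\rangle^{2a}|\le 2a\langle x\rangle^{2a-1}\le 2a\langle x\rangle^{2a}$. Split the weights as $\langle x\rangle^{2a}=\langle x\rangle^{a_2}\langle x\rangle^{a_0}$ with $a_0+a_2=2a$, apply Cauchy--Schwarz, and absorb the middle term. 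This gives a log-convexity estimate of the form
\begin{equation*}
\|\langle x\rangle^{a}\partial_x h\|^2\lesssim \|\langle x\rangle^{a_2}\partial_x^2 h\|\,\|\langle x\rangle^{a_0} h\|+\|\langle x\rangle^{a_0}h\|^2 .
\end{equation*}
The lower-order term $\|\langle x\rangle^{a_0}h\|^2$ is harmless, since it is bounded by the weighted endpoint times the lower norm.

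\textbf{Step 2 (iteration).} Define $A_j=\|\langle x\rangle^{n(1-j/m)}\partial_x^j g\|_{L^2}$. Applying Step 1 with $h=\partial_x^{j-1}g$ and weights $n(1-\tfrac{j-1}{m})$, $n(1-\tfrac{j+1}{m})$ (these average to $n(1-\tfrac jm)$) gives
\begin{equation*}
A_j^2\lesssim A_{j-1}A_{j+1}+\text{(lower order)}.
\end{equation*}
This discrete log-convexity, combined with the endpoint bounds $A_0=\|\langle x\rangle^n g\|$ and $A_m\le\|J^m g\|$, yields $A_j\lesssim A_0^{1-j/m}A_m^{j/m}$ by the standard convexity lemma for sequences. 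The derivation is first carried out for $g\in\mathcal S$, or with truncated weights $\langle x\rangle^n\chi_\epsilon$, and then extended by a limiting argument. This is where the hypotheses $f\in\dot H^M$ and $\langle x\rangle^n\partial_x^r f\in L^2$ guarantee that every quantity is finite.

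\textbf{Alternative and main obstacle.} The lower-order terms may spoil exact homogeneity. The cleaner route I would try first is complex interpolation, as in \cite{NahasPonce2009}. Consider the analytic family
\begin{equation*}
T_z=\langle x\rangle^{n(1-z)}\,J^{mz}\,(\text{applied to }g),
\end{equation*}
and use the three-lines lemma on the strip $0\le\Re z\le1$. The boundary bounds are:
\begin{itemize}
\item on $\Re z=1$, the bound is $\|J^m g\|$ up to the unitary factors $\langle x\rangle^{-iny}$ and $J^{imy}$;
\item on $\Re z=0$, $\langle x\rangle^{n}J^{imy}g$ must be controlled by $\langle x\rangle^n g$.
\end{itemize}
The $\Re z=0$ bound, i.e. boundedness of the imaginary powers $J^{iy}$ on the weighted space $L^2(\langle x\rangle^{2n}dx)$ with polynomial growth in $y$, is the main obstacle. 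I expect to handle it via Mihlin multiplier theory with the $A_2$ weight property of $\langle x\rangle^{2n}$ when $2n<1$, and by commuting through integer powers of $x$ otherwise. Finally, $\partial_x^j J^{-j}$ is bounded on the same weighted space by the same reasoning, which passes from $J^{j}g$ to $\partial_x^j g$ at $\theta=j/m$.
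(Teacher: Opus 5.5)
Your reduction to $g=\partial_x^r f$, $m=M-r$ is fine, but neither route is complete as written.

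\textbf{Step 1 misstates the error term.} Absorbing the middle term leaves a multiple of $\|\langle x\rangle^{a-1}h\|_{L^2}^2$, not $\|\langle x\rangle^{a_0}h\|_{L^2}^2$. With your version, Step~2 becomes $A_j^2\lesssim A_{j-1}A_{j+1}+A_{j-1}^2$, and the lemma does not follow from this. For $m=2$, $j=1$ it only gives $A_1\lesssim A_0^{1/2}A_2^{1/2}+A_0$. But for $g=\phi(\cdot/R)$ with $R\gg1$ one has $A_0\sim R^{n+\frac12}$, whereas $A_0^{1/2}\|J^2g\|_{L^2}^{1/2}\sim R^{\frac{n+1}{2}}$. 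So the term is not ``harmless'', and no convexity lemma can absorb an additive $A_{j-1}^2$.

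The missing idea is to keep the smaller weight $a-1\le a$ and interpolate in the weight exponent, using the inhomogeneity of $J$. With $a=n(1-\frac jm)$ and $\mu=\frac{m-j}{m-j+1}$, H\"older gives
\begin{equation*}
\|\langle x\rangle^{a}\partial_x^{j-1}g\|_{L^2}^2\le A_{j-1}^{2\mu}\,\|\partial_x^{j-1}g\|_{L^2}^{2(1-\mu)}\le A_{j-1}^{2\mu}\,\|J^mg\|_{L^2}^{2(1-\mu)}.
\end{equation*}
Write $A_j=\rho_jB_j$ with $B_j=A_0^{1-j/m}\|J^mg\|_{L^2}^{j/m}$, so that $B_j^2=B_{j-1}B_{j+1}$ and $B_{j-1}^{2\mu}\|J^mg\|_{L^2}^{2(1-\mu)}=B_j^2$. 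You then get $\rho_j^2\le 2\rho_{j-1}\rho_{j+1}+C\rho_{j-1}^{2\mu}$, with $\rho_0=1$ and $\rho_m\le1$. The plain convexity lemma still does not apply, because of the factor $2$ and the error term. A maximum argument for $2^{-j(m-j)}\rho_j$ does work: the rescaling turns the $2$ into $\frac12$, and $\mu<1$ controls the error. This gives $\rho_j\lesssim1$ for Schwartz $g$, and density plus Fatou finish. Repaired this way, your route is an elementary alternative to the paper's, valid for every real $n>0$, that avoids complex interpolation altogether.

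\textbf{Your fallback orders the analytic family the hard way.} With $\langle x\rangle^{n(1-z)}J^{mz}g$ you need weighted $L^2$ bounds for $J^{iy}$, and then for $\partial_x^jJ^{-j}$. Your $A_2$/integer-commutation sketch fails whenever the weight is $\langle x\rangle^{2s}$ with $s\in\frac12+\mathbb{Z}$, for instance $n=\frac12$, since $\langle x\rangle^{\pm1}\notin A_2$. Stein--Weiss interpolation between integer weights would patch this.

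The paper avoids the issue by quoting the Nahas--Ponce inequality
\begin{equation*}
\|J^{\theta a}(\langle x\rangle^{(1-\theta)b}g)\|_{L^2}\le C\|J^{a}g\|_{L^2}^{\theta}\|\langle x\rangle^{b}g\|_{L^2}^{1-\theta},
\end{equation*}
in which $J$ sits outside the weight. On $\Re z=0$ the norm is exactly $\|\langle x\rangle^n g\|_{L^2}$. On $\Re z=1$ one only needs multiplication by the smooth unimodular function $\langle x\rangle^{-iny}$ to be bounded on $H^m$.

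The paper then passes from $J^{l_1}(\langle x\rangle^{n_1}g)$ to $\langle x\rangle^{n_1}\partial_x^{l_1}g$ via the Leibniz identity
\begin{equation*}
\langle x\rangle^{n_1}\partial_x^{l_1}g=\sum_{k_1+k_2=l_1}c_{k_1,k_2}\,\partial_x^{k_1}\big(\partial_x^{k_2}(\langle x\rangle^{n_1})\,g\big).
\end{equation*}
This works because $\partial_x^{k_2}(\langle x\rangle^{n_1})/\langle x\rangle^{n_1}$ has bounded derivatives of all orders. No weighted multiplier theory is needed.
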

\begin{proof}
Write $l=l_1+r$ and denote $n_1=n\big(\frac{M-l}{M-r}\big)\equiv n\big(\frac{M-l_1-r}{M-r}\big)$. Distributing the derivative of order $l_1$, we get
\begin{equation*}
\langle x \rangle^{n_1}\partial_x^l f=\langle x \rangle^{n_1}\partial_x^{l_1}\partial_x^{r}f=\sum_{k_1+k_2=l_1}c_{k_1,k_2}\partial_x^{k_1}\big(\partial_x^{k_2}(\langle x\rangle^{n_1})\partial_x^r f\big)
\end{equation*}
for some constants $c_{k_1,k_2}\in \mathbb{R}$. By using the fact that $|\partial_x^j\big(\langle x\rangle^{-n_1}\partial_x^{k_2}(\langle x\rangle^{n_1})\big)|\lesssim 1$ for each of the integers $k_2,j\geq 0$, we distribute the derivative of order $k_1\leq l_1$ to obtain
\begin{equation}
\|\partial_x^{k_1}\big(\partial_x^{k_2}(\langle x\rangle^{n_1})\partial_x^r f\big)\|_{L^2}= \|\partial_x^{k_1}\Big(\frac{\partial_x^{k_2}(\langle x\rangle^{n_1})}{\langle x\rangle^{n_1}} \langle x\rangle^{n_1}\partial_x^r f\Big)\|_{L^2} 
\leq C  \|J^{l_1}(\langle x \rangle^{n_1} \partial_x^{r}f)\|_{L^2}. \label{E:3}
\end{equation}
To complete the estimate of the inequality above, we recall the following interpolation inequality from \cite{NahasPonce2009}: for $a,b>0$, and $\theta\in (0,1)$, we have
\begin{equation*}
\|J^{\theta a}\big(\langle x \rangle^{(1-\theta)b}f\big)\|_{L^2}\leq C \|J^{a} f\|_{L^2}^{\theta} \|\langle x \rangle^{b}f\|^{1-\theta}_{L^2}.
\end{equation*}
Applying this to \eqref{E:3}, we get  
\begin{equation*}
\begin{aligned}
 \|J^{l_1}(\langle x \rangle^{n_1} \partial_x^{r}f)\|_{L^2}\leq C  \|{J^{M-r} \partial_x^rf}\|^{\frac{l_1}{M-r}}_{L^2}\|\langle x \rangle^n \partial_x^r f\|_{L^2}^{\frac{M-r-l_1}{M-r}},  
\end{aligned}
\end{equation*}
thus, completing the proof.
\end{proof}

We now deduce the key nonlinear estimates, which involves estimating each part in the space $\mathcal X$ separately. 

\begin{lemma}\label{lemmanonlEs} 
Let $u\in \mathcal{X}$ and $\lambda>0$ be such that
\begin{equation}\label{cond1}
     \inf_{x\in \mathbb{R}}|\langle x \rangle^n u(x)|\geq \lambda>0.
\end{equation}
Then there exists a constant $C=C(M,r,n)>0$, independent of $\alpha$, such that
\begin{equation}\label{E:nonlin}
    \||u|^{\alpha}u\|_{\mathcal{X}}\leq C\|u\|_{\mathcal{X}}^{\alpha+1}+
    \sum_{1\leq \beta \leq M}C_{\beta}\big(\lambda^{-|\alpha-2\beta|}+\|u\|_{\mathcal{X}}^{|\alpha-2\beta|}\big)\|u\|_{\mathcal{X}}^{2\beta+1},
\end{equation}
where
\begin{equation}\label{constantdepen}
    |C_{\beta}|\leq C_1\underbrace{\Big|\frac{\alpha}{2}\big(\frac{\alpha}{2}-1\Big)\dots\Big(\frac{\alpha}{2}-(\beta-1)\Big)\Big|}_{\beta-\text{times}},
\end{equation}
with $C_1=C_1(M,r,n)>0$ independent of $\alpha$ for all $1\leq \beta \leq M$.

Moreover, if also $v\in \mathcal{X}$ satisfies \eqref{cond1}, then
\begin{equation} \label{E:nonlin-dif}
\begin{aligned}
    \big\||u|^{\alpha}u -|v|^{\alpha}v \big\|_{\mathcal{X}} &
    \leq C |\alpha|\big(\lambda^{-|\alpha-1|}+(\|u\|_{\mathcal{X}}+\|v\|_{\mathcal{X}})^{|\alpha-1|}\big)\|u\|_{\mathcal{X}}\|u-v\|_{\mathcal{X}}
+C\big(\|u\|_{\mathcal{X}}+\|v\|_{\mathcal{X}}\big)^{\alpha}\|u-v\|_{\mathcal{X}}\\
    &+\sum_{1\leq \beta\leq M} C_{\beta}\bigg(|\frac{\alpha}{2}-\beta|\big(\lambda^{-|\alpha-2\beta-1|}+(\|u\|_{\mathcal{X}}+\|v\|_{\mathcal{X}})^{|\alpha-2\beta-1|}\big)\big(\|u\|_{\mathcal{X}}+\|v\|_{\mathcal{X}}\big)^{2\beta+1}\\
    &\hspace{1.2cm} +\big(\lambda^{-|\alpha-2\beta|}+(\|u\|_{\mathcal{X}}+\|v\|_{\mathcal{X}})^{|\alpha-2\beta|}\big)\big(\|u\|_{\mathcal{X}}+\|v\|_{\mathcal{X}}\big)^{2\beta}\bigg)\|u-v\|_{\mathcal{X}},
\end{aligned}
\end{equation}
where $C_{\beta}>0$ satisfies \eqref{constantdepen}.
\end{lemma}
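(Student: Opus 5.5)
We estimate each of the three pieces of the $\mathcal{X}$-norm of $|u|^\alpha u$ separately, writing $|u|^\alpha = (u\bar u)^{\alpha/2}$ so that derivatives can be taken by Faà di Bruno. The zeroth-order piece is immediate:
\[
\|\langle x\rangle^n |u|^\alpha u\|_{L^\infty}\le \|u\|_{L^\infty}^\alpha\|\langle x\rangle^n u\|_{L^\infty}\le C\|u\|_{\mathcal X}^{\alpha+1},
\]
since $\|u\|_{L^\infty}\le\|\langle x\rangle^n u\|_{L^\infty}$. This gives the first term in \eqref{E:nonlin}.

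For $1\le k\le M$ we expand $\partial_x^k(|u|^\alpha u)$ by Leibniz as a sum of $\partial_x^{k_1}(|u|^\alpha)\,\partial_x^{k_2}u$. We then expand $\partial_x^{k_1}(u\bar u)^{\alpha/2}$ by the chain rule. Each term has the form
\[
\tfrac{\alpha}{2}\big(\tfrac{\alpha}{2}-1\big)\cdots\big(\tfrac{\alpha}{2}-(\beta-1)\big)\,|u|^{\alpha-2\beta}\prod_{j=1}^{\beta}\partial_x^{m_j}(u\bar u),
\]
with $1\le\beta\le k_1$, $m_j\ge1$ and $\sum m_j=k_1$. This produces exactly the constant $C_\beta$ in \eqref{constantdepen}. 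The only combinatorial factors are Faà di Bruno multiplicities, which depend on $M$ but not on $\alpha$.

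The factor $|u|^{\alpha-2\beta}$ is handled by cases on the sign of its exponent.
\begin{itemize}
\item If $\alpha-2\beta\ge0$, we bound it by $\|u\|_{L^\infty}^{\alpha-2\beta}$.
\item If $\alpha-2\beta<0$, we use \eqref{cond1}, which gives $|u|^{\alpha-2\beta}\le \lambda^{-|\alpha-2\beta|}\langle x\rangle^{n|\alpha-2\beta|}$.
\end{itemize}

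The remaining product contains $2\beta+1$ factors of $u$ or $\bar u$ with derivatives distributed among them. At most one factor carries a high derivative, of order greater than $k/2$. We put that factor in $L^2$ and all others in $L^\infty$, controlled through $H^1\hookrightarrow L^\infty$ by the $\mathcal X$-norm.

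For the weighted pieces ($k\le r$) we must absorb $\langle x\rangle^{n}$ together with the possible growing weight $\langle x\rangle^{n|\alpha-2\beta|}$. The key point is that in each term at least $2\beta$ factors are undifferentiated or carry derivatives of order at most $r$. Undifferentiated factors carry decay $\langle x\rangle^{-n}$ from $\langle x\rangle^n u\in L^\infty$. Differentiated ones of order at most $r$ carry $\langle x\rangle^{-n}$ in $L^2$ or, via Sobolev with an extra derivative, in $L^\infty$; this is where $M\ge n+r$ and Lemma \ref{interlemma} enter, giving weighted control on derivatives up to order $r+1$. Since each of the $2\beta$ factors $u$ or $\bar u$ coming from $(u\bar u)$ supplies $\langle x\rangle^{-n}$, the total decay available is $\langle x\rangle^{-2\beta n}$. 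This decay dominates $\langle x\rangle^{n(2\beta-\alpha)}$, with a remaining $\langle x\rangle^{-n\alpha}$ to spare, and one further weighted factor absorbs the outer $\langle x\rangle^n$.

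The unweighted pieces with $r<k\le M$ are easier. There we only need $L^2$ of $\partial_x^k$, but the negative power still requires the weight trick above, now with the outer weight absent. For factors with many derivatives, of order between $r$ and $M$, we use the interpolated weights $\langle x\rangle^{n(M-l)/(M-r)}$ from Lemma \ref{interlemma}. The remaining factors, which carry few derivatives, supply full $\langle x\rangle^{-n}$ decay, and these weights are distributed to cancel the $\lambda$-term weight. Summing all cases gives $(\lambda^{-|\alpha-2\beta|}+\|u\|^{|\alpha-2\beta|})\|u\|^{2\beta+1}$, which is \eqref{E:nonlin}.

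\textbf{Main obstacle.} The hardest step is this weight bookkeeping for high $k$ when $\alpha<2\beta$: one must check that the decay available always compensates $\langle x\rangle^{n(2\beta-\alpha)}$. I would first try a crude count: every factor except the single top-derivative one is in a weighted $L^\infty$ space with weight $\langle x\rangle^{n}$. This is guaranteed by $M\ge n+r$ and Lemma \ref{interlemma} with Sobolev embedding.

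For the difference estimate \eqref{E:nonlin-dif}, we write
\[
F(u)-F(v)=\int_0^1 \frac{d}{d\theta}F\big(v+\theta(u-v)\big)\,d\theta .
\]
Along the segment, $v+\theta(u-v)$ satisfies \eqref{cond1} with $\lambda$ replaced by a comparable constant, assuming $u,v$ lie in the relevant neighborhood. Differentiating in $\theta$ lowers the exponent by one, giving $|\alpha-1|$ at $\beta=0$ and $|\alpha-2\beta-1|$ with the factor $|\tfrac\alpha2-\beta|$ in the other terms. We then repeat the same term-by-term estimates, placing one factor $u-v$ in the norm where $u$ stood.
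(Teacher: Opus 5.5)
Your treatment of the zeroth-order term and of the weighted pieces $1\le k\le r$ is essentially the paper's. There are, however, two genuine gaps.

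The more serious gap is in the difference estimate. The segment $w_\theta=v+\theta(u-v)$ need not stay in the non-vanishing region, because the pointwise constraint $|w(x)|\ge\lambda\langle x\rangle^{-n}$ defines a non-convex set. For example, if $u$ satisfies \eqref{cond1} then so does $v=-u$, yet $w_{1/2}\equiv 0$, and there the negative powers of $|w_\theta|$ produced by differentiation are singular. The lemma assumes only that $u$ and $v$ each satisfy \eqref{cond1}, and it is used in that generality. The contraction on $\mathcal{E}_{R,T}$ compares arbitrary elements, and $\mathcal{E}_{R,T}$ is invariant under $v\mapsto -v$. So ``assuming $u,v$ lie in the relevant neighborhood'' adds a hypothesis the statement does not have. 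You could enforce that hypothesis in the application by working in a convex set such as $\{\|\langle x\rangle^n(w-u_0)\|_{L^\infty}\le\lambda/2\}$, but that does not prove the lemma. Replacing $\lambda$ by a smaller $\lambda'$ would also introduce $\alpha$-dependent factors $(\lambda/\lambda')^{|\alpha-2\beta-1|}$. The paper never interpolates between $u$ and $v$. It telescopes $\partial_x^k(|u|^\alpha u)-\partial_x^k(|v|^\alpha v)=\mathcal{A}_1+\dots+\mathcal{A}_4$ so that the difference falls on one factor at a time. It then handles the only non-polynomial differences by the mean value theorem applied to the real numbers $|u(x)|,|v(x)|$, both of which are $\ge\lambda\langle x\rangle^{-n}$:
\[
\big||u|^{\gamma}-|v|^{\gamma}\big|\le|\gamma|\big(\lambda^{-|\gamma-1|}+(\|u\|_{\mathcal{X}}+\|v\|_{\mathcal{X}})^{|\gamma-1|}\big)\langle x\rangle^{-n(\gamma-1)}|u-v|,\qquad \gamma\in\{\alpha,\ \alpha-2\beta\}.
\]
The remaining differences $\partial_x^{\ell}(|u|^2-|v|^2)$ and $\partial_x^{m}(u-v)$ are polynomial. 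This is exactly where the factors $|\alpha|\lambda^{-|\alpha-1|}$ and $|\tfrac{\alpha}{2}-\beta|\lambda^{-|\alpha-2\beta-1|}$ in \eqref{E:nonlin-dif} come from.

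The second gap is the step you flag as the main obstacle: the weight count for $r<k\le M$ when $\alpha<2\beta$. Your crude count needs $\langle x\rangle^n\partial_x^{\nu}u\in L^\infty$ for every non-top factor, but the $\mathcal{X}$-norm gives this only for $\nu\le r-1$. Any integer $M\ge n+r$ is allowed, so $M\ge 2r$ can occur, and then a non-top factor may carry $\nu\ge r$ derivatives. For such $\nu$, neither $M\ge n+r$ nor Lemma \ref{interlemma} plus Sobolev gives the full weight. They yield only $\langle x\rangle^{n\frac{M-\nu-1}{M-r}}$; Lemma \ref{interlemma} puts only the weight $n\frac{M-r-1}{M-r}<n$ even on $\partial_x^{r+1}u$.

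What is missing is the aggregate count carried out in the paper. Put the top factor in $L^2$ with weight $\langle x\rangle^{n\frac{M-\nu_{2\beta+1}}{M-1}}$, and each other factor in $L^\infty$ with weight $\langle x\rangle^{n\frac{M-\nu_w-1}{M-1}}$. These are valid lower bounds for every order: full weight $n$ is available for small $\nu$, and $\frac{M-r}{M-1}\le 1$ handles large $\nu$. Then add the exponents using $\nu_1+\dots+\nu_{2\beta+1}=k\le M$. The total decay exponent is
\[
\frac{n}{M-1}\big((2\beta+1)M-k-2\beta\big)=n(2\beta+1)-n\frac{k-1}{M-1}\ge 2\beta n,
\]
which dominates the growth $\langle x\rangle^{n(2\beta-\alpha)}$ coming from $|u|^{\alpha-2\beta}$. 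This is the paper's check $\Gamma\le-\alpha$. Your conclusion holds in aggregate, but not factor by factor as you claimed.
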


\begin{proof}
Given $\beta\in \mathbb{R}$, the definition of the space $\mathcal{X}$ and the condition \eqref{cond1} imply
\begin{equation}\label{estim1}
    \begin{aligned}
        |u|^{\beta} &\leq \left\{ \begin{aligned}
&\lambda^{\beta}\langle x \rangle^{-n\beta}, \hspace{2cm} \text{ if }  \beta<0, \\
&\|\langle x \rangle^n u\|_{L^{\infty}}^{\beta}\langle x \rangle^{-n\beta}, \qquad \text{ if } \beta\geq 0,
\end{aligned}\right.\\
& \leq (\lambda^{-|\beta|}+\|u\|_{\mathcal{X}}^{|\beta|})\langle x \rangle^{-n\beta},
    \end{aligned}
\end{equation}
and the mean value inequality gives
\begin{equation}\label{estim2}
\begin{aligned}
\Big||u|^\beta-|v|^\beta\Big|& \leq \left\{ \begin{aligned}
&|\beta|\lambda^{\beta-1}\langle x \rangle^{-n(\beta-1)}|u-v|, \hspace{5cm} \text{ if }  \beta<1, \\
&|\beta|(\max\big\{\|\langle x \rangle^n u\|_{L^{\infty}_x}^{\beta-1},\|\langle x \rangle^n v\|_{L^{\infty}_x}^{\beta-1}\big\})\langle x \rangle^{-n(\beta-1)}|u-v|, \quad \text{ if } \beta\geq 1,
\end{aligned}\right.\\
& \leq |\beta|(\lambda^{-|\beta-1|}+(\|u\|_{\mathcal{X}}+\|v\|_{\mathcal{X}})^{|\beta-1|})\langle x \rangle^{-n(\beta-1)}|u-v|.
\end{aligned}
\end{equation}

In order to prove the nonlinear estimate \eqref{E:nonlin} and the difference \eqref{E:nonlin-dif}, we divide our arguments according to the norms in the definition of the space $\mathcal{X}$.

{$\bullet$ \bf $L^{\infty}$-norm in weighted space}. By using that $1\leq \langle x \rangle^n$, we have
\begin{equation*}
    \|\langle x \rangle^n|u|^{\alpha}u\|_{L^{\infty}}\leq \|u\|_{L^{\infty}}^{\alpha}\|\langle x \rangle^n u\|_{L^{\infty}}\leq \|u\|_{\mathcal{X}}^{\alpha+1}.
\end{equation*}
For the weighted difference, we apply \eqref{estim2} to obtain
\begin{equation*}
\begin{aligned}
\|\langle x \rangle^n\big(|u|^{\alpha}u- &|v|^{\alpha}v\big)\|_{L^{\infty}}
    \leq \|\langle x \rangle^n\big(|u|^{\alpha}-|v|^{\alpha}\big)u\|_{L^{\infty}}+\|\langle x \rangle^n|v|^{\alpha}\big(u-v\big)\|_{L^{\infty}}\\
    & \leq \||u|^{\alpha}-|v|^{\alpha}\|_{L^{\infty}}\|\langle x \rangle^n u\|_{L^{\infty}}+\|v\|^{\alpha}_{L^{\infty}}\|\langle x \rangle^n(u-v)\|_{L^{\infty}}\\
    & \leq |\alpha|\|\langle x\rangle^{-n\alpha}\|_{L^{\infty}}\big(\lambda^{-|\alpha-1|}+(\|u\|_{\mathcal{X}}+\|v\|_{\mathcal{X}})^{|\alpha-1|}\big)\|\langle x \rangle^n(u-v)\|_{L^{\infty}}\|\langle x \rangle^n u\|_{L^{\infty}}
    +\|v\|^{\alpha}_{\mathcal{X}}\|u-v\|_{\mathcal{X}}\\
    & \leq |\alpha|\big(\lambda^{-|\alpha-1|}+(\|u\|_{\mathcal{X}}+\|v\|_{\mathcal{X}})^{|\alpha-1|}\big)\|u\|_{\mathcal{X}}\|u-v\|_{\mathcal{X}} +\|v\|^{\alpha}_{\mathcal{X}}\|u-v\|_{\mathcal{X}}.
\end{aligned}
\end{equation*}

{$\bullet$ \bf $L^{2}$-norm derivatives in weighted space}. 
Let $1 \leq k \leq r$. Using the Leibniz's rule, we  express an order $k$  derivative $\partial_{x}^{k}(|u|^{\alpha} u)$ as
\begin{equation}{\label{NLderivative}}
 \partial_{x}^k(|u|^{\alpha} u) = \sum_{\gamma=1}^k \binom{k}{\gamma} \partial_{x}^{\gamma}(|u|^{\alpha})\partial_{x}^{k-\gamma}u + |u|^{\alpha}\partial_{x}^{k}u,   
\end{equation}
where the derivative of $|u|^{\alpha}$ could be written as
\begin{equation}\label{GeneralNLderivative}
    \partial_{x}^{\gamma}(|u|^{\alpha}) = \sum_{\beta=1}^{\gamma}|u|^{\alpha - 2\beta}\Big(\sum_{\substack{\ell_{1}+...+\ell_{\beta} = \gamma \\ \ell_{j} \geq 1}} c_{\gamma,\ell_{1},\dots,\ell_{\beta}} \partial_{x}^{\ell_{1}}(|u|^{2})...\partial_{x}^{\ell_{\beta}}(|u|^{2})\Big),
\end{equation}
with the constant $c_{\gamma,\ell_1,\dots,\ell_{\beta}}$ upper bounded as
\begin{equation}\label{constantcond}
    |c_{\gamma,\ell_1,\dots,\ell_{\beta}}|\leq 2^{\gamma}\underbrace{\big|\tfrac{\alpha}{2}\big(\tfrac{\alpha}{2}-1\big)\dots \big(\tfrac{\alpha}{2}-(\beta-1)\big)\big|}_{\beta-\text{times}}.
\end{equation}
In particular, $|c_{\gamma,\ell_1}|\leq 2^{\gamma}|\frac{\alpha}{2}|$ for each integer $l_1$. Then
\begin{equation}\label{estimexpre1}
\begin{aligned}
 \|\langle x \rangle^n \partial_x^k(|u|^{\alpha}u)\|_{L^{2}} 
        \leq & \sum_{\gamma=1}^k\sum_{\beta=1}^{\gamma}\sum_{\substack{\ell_1+\dots+\ell_{\beta}=\gamma \\ \ell_j \geq 1}{}} \tbinom{k}{\gamma}|c_{\gamma,\ell_1,\dots,\ell_{\beta}}| \, \big\|\langle x \rangle^{n}|u|^{\alpha-2\beta}\partial_{x}^{\ell_{1}}(|u|^{2})...\partial_{x}^{\ell_{\beta}}(|u|^{2}) \partial_x^{k-\gamma}u \big\|_{L^{2}}\\
        &+\|\langle x \rangle^n|u|^{\alpha}\partial_x^k u\|_{L^{2}}.
    \end{aligned}
\end{equation}
Observe that the last term is easily bounded as
\begin{equation}\label{eqfirst1}
    \|\langle x \rangle^n|u|^{\alpha}\partial_x^k u\|_{L^{2}}\leq \|u\|_{L^{\infty}}^{\alpha}\|\langle x \rangle^n \partial_x^k u\|_{L^2}\leq \|u\|_{\mathcal{X}}^{\alpha+1}.
\end{equation}
The term in the sum of \eqref{estimexpre1} when $\gamma=k$ and $\beta=1$ is estimated, using \eqref{estim1}, as
\begin{align}
\|\langle x \rangle^{n} |u|^{\alpha-2}\partial_x^k(|u|^2)u\|_{L^2}&
\leq (\lambda^{-|\alpha-2|}+\|u\|_{\mathcal{X}}^{|\alpha-2|})\|\langle x\rangle^{-n(\alpha-2)+n}\partial_x^k(|u|^2)u\|_{L^{2}} \label{E:first1} \\
     &\leq \sum_{m=0}^k \tbinom{k}{m} (\lambda^{-|\alpha-2|}+\|u\|_{\mathcal{X}}^{|\alpha-2|})\|\langle x\rangle^{-n(\alpha-2)+n}\partial_x^m u \partial_x^{k-m}u \,u\|_{L^{2}} \notag \\
      &\leq 2(\lambda^{-|\alpha-2|}+\|u\|_{\mathcal{X}}^{|\alpha-2|})\|\langle x \rangle^{-n\alpha}\|_{L^{\infty}}\|\langle x\rangle^n u\|_{L^{\infty}}^2\|\langle x\rangle^n \partial_x^k u\|_{L^2} \label{E:penultimate1}\\
      &\qquad+\sum_{m=1}^{k-1} \tbinom{k}{m} (\lambda^{-|\alpha-2|}+\|u\|_{\mathcal{X}}^{|\alpha-2|})\|\langle x\rangle^{-n(\alpha-2)+n}\partial_x^m u \partial_x^{k-m}u \,u\|_{L^{2}}. \label{E:last1}
\end{align} 
Using the Sobolev embedding $H^1(\mathbb{R}) \hookrightarrow L^{\infty}(\mathbb{R})$, we obtain
\begin{equation}\label{suppcond}
    \sup_{1\leq \beta\leq r-1}\|\langle x \rangle^n\partial_x^{\beta}u\|_{L^{\infty}}\leq (n+1)C\sum_{\beta=1}^r\|\langle x \rangle^n\partial_x^{\beta}u\|_{L^{2}} \leq (n+1)C\|u\|_{\mathcal{X}},
\end{equation}
for some universal constant $C\geq 1$. Then, when $1\leq m\leq k-1$, the last term in \eqref{E:last1} is bounded as
\begin{equation*}
    \begin{aligned}
        \|\langle x\rangle^{-n(\alpha-2)+n}\partial_x^m u \partial_x^{k-m}u \, u\|_{L^{2}} &\leq \|\langle x\rangle^{-n\alpha}\|_{L^{\infty}}\|\langle x\rangle^{n} \partial_x^{k-m}u \|_{L^{\infty}}\|\langle x\rangle^{n} u\|_{L^{\infty}}\|\langle x\rangle^{n}\partial_x^m u \|_{L^{2}} 
        \leq (n+1)C\|u\|_{\mathcal{X}}^3.
    \end{aligned}
\end{equation*}
Substituting the last two estimates into \eqref{E:penultimate1} and \eqref{E:last1}, the upper bound for \eqref{E:first1} yields 
\begin{equation}\label{eqfirst2}
\begin{aligned}
     \|\langle x \rangle^{n}|u|^{\alpha-2}\partial_x^k(|u|^2)u\|_{L^2} &\leq \Big(\lambda^{-|\alpha-2|}+\|u\|_{\mathcal{X}}^{|\alpha-2|}\Big)\Big(2+(n+1)C \sum_{m=1}^{k-1} \tbinom{k}{m} \Big) \|u\|_{\mathcal{X}}^3.
\end{aligned}
\end{equation}
Next, we set $1\leq \gamma \leq k$, $1\leq \beta \leq \gamma$ and $\ell_1+\dots+\ell_{\beta}=\gamma$ with $\ell_j\geq 1$, $j=1,\dots,\beta$, and with $\beta \neq 1$ if $\gamma=k$. Applying \eqref{estim1} to the summand in \eqref{estimexpre1}, we get  
\begin{equation}
    \begin{aligned}
        \|\langle x \rangle^{n} |u|^{\alpha-2\beta}&\partial_{x}^{\ell_{1}}(|u|^{2})...\partial_{x}^{\ell_{\beta}}(|u|^{2}) \partial_x^{k-\gamma}u\|_{L^{2}}\\
        &\leq \big(\lambda^{-|\alpha-2\beta|}+\|u\|_{\mathcal{X}}^{|\alpha-2\beta|}\big)\|\langle x \rangle^{-n(\alpha-2\beta)+n}\partial_{x}^{\ell_{1}}(|u|^{2})...\partial_{x}^{\ell_{\beta}}(|u|^{2}) \partial_x^{k-\gamma}u\|_{L^2}\\
        &\leq\ \big(\lambda^{-|\alpha-2\beta|}+\|u\|_{\mathcal{X}}^{|\alpha-2\beta|}\big)\\
        &\hspace{1cm}\times\sum_{\substack{ 0\leq m_j\leq \ell_j \\ j=1,\dots, \beta}} \tbinom{\ell_1}{m_1}\dots \tbinom{\ell_{\beta}}{m_{\beta}}
        \|\langle x \rangle^{-n(\alpha-2\beta)+n}\partial_{x}^{m_1}u\partial_{x}^{\ell_{1}-m_1}u...\partial_x^{m_{\beta}}u\partial_{x}^{\ell_{\beta}-m_{\beta}}u \partial_x^{k-\gamma}u\|_{L^2}.
    \end{aligned}
\end{equation}
The above restrictions on indexes imply that $\ell_j<k\leq r$ and $k-\gamma<r$. Applying \eqref{suppcond} { and keeping the highest-order derivative in $L^2(\mathbb{R})$}, we get
\begin{equation}\label{eqfirst3}
    \begin{aligned}
        \|\langle x \rangle^{n} |u|^{\alpha-2\beta} &\partial_{x}^{\ell_{1}}(|u|^{2})...\partial_{x}^{\ell_{\beta}}(|u|^{2}) \partial_x^{k-\gamma}u\|_{L^{2}}
         \leq \big(\lambda^{-|\alpha-2\beta|}+\|u\|_{\mathcal{X}}^{|\alpha-2\beta|}\big)\\
        &\hspace{0.5cm}\times\sum_{\substack{ 0\leq m_j\leq \ell_j \\ j=1,\dots, \beta}} \tbinom{\ell_1}{m_1}\dots \tbinom{\ell_{\beta}}{m_{\beta}}
        \|\langle x \rangle^{-n\alpha}\|_{L^{\infty}}\Big(\sum_{m=0}^{r-1}\|\langle x \rangle^{n}\partial_x^m u\|_{L^{\infty}}\Big)^{2\beta}\Big(\sum_{m=1}^{r}\|\langle x \rangle^n \partial_x^m u\|_{L^2}\Big)  \\
         &\leq \big(\lambda^{-|\alpha-2\beta|}+\|u\|_{\mathcal{X}}^{|\alpha-2\beta|}\big)\sum_{\substack{ 0\leq m_j\leq \ell_j \\ j=1,\dots, \beta}} \big((n+1)C\big)^{2\beta}\tbinom{\ell_1}{m_1}\dots \tbinom{\ell_{\beta}}{m_{\beta}}\|u\|_{\mathcal{X}}^{2\beta+1}. 
    \end{aligned}
\end{equation}
Thus, collecting \eqref{eqfirst1}, \eqref{eqfirst2} and \eqref{eqfirst3}, we conclude

\begin{align}\label{eqfisrt4} 
\|\langle x \rangle^n\partial_x^k(|u|^{\alpha}u)\|_{L^2}\leq  \|u\|_{\mathcal{X}}^{\alpha+1}&+C_1\big(2+(n+1)C\sum_{m=1}^{k-1}\tbinom{k}{m}\big)\big(\lambda^{-|\alpha-2|}+\|u\|_{\mathcal{X}}^{|\alpha-2|}\big)\|u\|_{\mathcal{X}}^3 \notag\\
    &+\sum_{\gamma=1}^{k}\sum_{\substack{1\leq \beta \leq \gamma \\ \beta \neq 1, \text{ if } \gamma=k}}\sum_{\substack{\ell_1+\dots+\ell_{\beta}=\gamma \notag \\ 
    \ell_j \geq 1}{}}\tbinom{k}{\gamma} |c_{\gamma,\ell_1,\dots,\ell_{\beta}}|\big(\lambda^{-|\alpha-2\beta|}+\|u\|_{\mathcal{X}}^{|\alpha-2\beta|}\big) \\
    &\hspace{1cm}\times\sum_{\substack{0\leq m_j\leq \ell_j \\ j=1,\dots,\beta}}((n+1)C)^{2\beta}\tbinom{\ell_1}{m_1}\dots \tbinom{\ell_{\beta}}{m_{\beta}}\|u\|_{\mathcal{X}}^{2\beta+1},
\end{align}
where the constant in the first line $0\leq C_1\leq 2^{k}|\alpha|$.

We next compute the same norm of the difference $|u|^{\alpha}u-|v|^{\alpha}v$ for $u, v \in \mathcal{X}$. Let $1\leq k \leq r$. Using \eqref{GeneralNLderivative}, we spit the difference as follows
\begin{equation}\label{identdiff}
\partial_{x}^{k}(|u|^{\alpha}u)- \partial_{x}^{k}(|v|^{\alpha}v)  
=:\mathcal{A}_1+\mathcal{A}_2+\mathcal{A}_3+\mathcal{A}_4,
\end{equation}
where
{\small
\begin{align*}
    \mathcal{A}_1:=&(|u|^{\alpha}-|v|^{\alpha})\partial_x^k u+|v|^{\alpha}\partial_x^k(u-v),\\
\mathcal{A}_2:=&\sum_{\gamma=1}^{k}\sum_{\beta=1}^{\gamma}\sum_{\substack{\ell_1+\dots+\ell_{\beta}=\gamma \\ \ell_j \geq 1}}\tbinom{k}{\gamma}c_{\gamma,\ell_1,\dots,\ell_{\beta}}\big(|u|^{\alpha-2\beta}-|v|^{\alpha-2\beta}\big) \partial_{x}^{\ell_1}(|u|^2)\dots \partial_{x}^{\ell_{\beta}}(|u|^2)\partial_x^{k-\gamma}u, \\
\mathcal{A}_3:=&\sum_{\gamma=1}^{k}\sum_{\beta=1}^{\gamma}\sum_{\substack{\ell_1+\dots+\ell_{\beta}=\gamma \\ \ell_j \geq 1}}\sum_{j=1}^{
    \beta}\tbinom{k}{\gamma}c_{\gamma,\ell_1,\dots,\ell_{\beta}} 
    \times |v|^{\alpha-2\beta} \partial_{x}^{\ell_1}(|v|^2)\dots \partial_{x}^{\ell_j}(|u|^2-|v|^2)\dots \partial_{x}^{\ell_{\beta}}(|u|^2)\partial_x^{k-\gamma}u, \\
    \mathcal{A}_4:=& \sum_{\gamma=1}^{k}\sum_{\beta=1}^{\gamma}\sum_{\substack{\ell_1+\dots+\ell_{\beta}=\gamma \\ 
    \ell_j \geq 1}}\tbinom{k}{\gamma}c_{\gamma,\ell_1,\dots,\ell_{\beta}}|v|^{\alpha-2\beta} \partial_{x}^{\ell_1}(|v|^2)\dots \partial_{x}^{\ell_{\beta}}(|v|^2)\partial_x^{k-\gamma}(u-v),
\end{align*}
}
with the constants $c_{\gamma,\ell_1,\dots,\ell_{\beta}}$ satisfying \eqref{constantcond}. Applying \eqref{estim2} to the first term yields
\begin{align*}
\|\langle & x \rangle^n \mathcal{A}_1\|_{L^2}
\leq |\alpha|\big(\lambda^{-|\alpha-1|}+(\|u\|_{\mathcal{X}}+\|v\|_{\mathcal{X}})^{|\alpha-1|}\big)
       \times\|\langle x \rangle^{-n(\alpha-1)+n}(u-v)\partial^k_x u\|_{L^2}+\|\langle x \rangle^n|v|^{\alpha}\partial_x^k(u-v)\|_{L^2}\\ 
\leq & |\alpha|\big(\lambda^{-|\alpha-1|}+(\|u\|_{\mathcal{X}}+\|v\|_{\mathcal{X}})^{|\alpha-1|}\big)\|\langle x \rangle^{-n\alpha}\|_{L^{\infty}}\|\langle x \rangle^{n}(u-v)\|_{L^{\infty}}\|\langle x \rangle^n\partial^k_x u\|_{L^2}+\|v\|_{L^{\infty}}^{\alpha}\|\langle x \rangle^n\partial_x^k(u-v)\|_{L^2}\\
\leq & |\alpha|\big(\lambda^{-|\alpha-1|}+(\|u\|_{\mathcal{X}}+\|v\|_{\mathcal{X}})^{|\alpha-1|}\big)\|u\|_{\mathcal{X}}\|u-v\|_{\mathcal{X}}+\|v\|_{\mathcal{X}}^{\alpha}\|u-v\|_{\mathcal{X}}.
\end{align*}
We apply \eqref{estim2}, \eqref{suppcond} and similar arguments to those used in the deduction of \eqref{eqfisrt4} to get
{\small
\begin{align*}
\|\langle & x \rangle^n \mathcal{A}_2\|_{L^2}
\leq \sum_{\gamma=1}^k\sum_{\beta=1}^{\gamma}\sum_{\substack{\ell_1
+\dots+\ell_{\beta}=\gamma \\ \ell_j \geq 1}}\sum_{\substack{0\leq m_j \leq \ell_j \\ 1\leq j \leq \beta}} |\alpha-2\beta||c_{\gamma,\ell_1,\dots,\ell_{\beta}}|\tbinom{k}{\gamma}\tbinom{\ell_1}{m_1}\dots \tbinom{\ell_{\beta}}{m_{\beta}}\\
&\times\big(\lambda^{-|\alpha-2\beta-1|}+(\|u\|_{\mathcal{X}}+\|v\|_{\mathcal{X}})^{|\alpha-2\beta-1|}\big)
        \times\|\langle x \rangle^n(u-v)\|_{L^{\infty}}\Big(\sum_{m=0}^{r-1} \|\langle x \rangle^{n} u\|_{L^{\infty}} \Big)^{2\beta}\Big({\sum_{m=1}^{r} \|\langle x \rangle^{n}\partial_x^m u\|_{L^{2}} }\Big)\\
          \leq & \sum_{\gamma=1}^k\sum_{\beta=1}^{\gamma}\sum_{\substack{\ell_1+\dots+\ell_{\beta}=\gamma \\ \ell_j \geq 1}}\sum_{\substack{0\leq m_j \leq \ell_j \\ 1\leq j \leq \beta}} ((n+1)C)^{2\beta}|\alpha-2\beta||c_{\gamma,\ell_1,\dots,\ell_{\beta}}|\tbinom{k}{\gamma}\tbinom{\ell_1}{m_1}\dots \tbinom{\ell_{\beta}}{m_{\beta}}\\
        &\hspace{4cm}\times\big(\lambda^{-|\alpha-2\beta-1|}+(\|u\|_{\mathcal{X}}+\|v\|_{\mathcal{X}})^{|\alpha-2\beta-1|}\big)\|u\|_{\mathcal{X}}^{2\beta+1}\|u-v\|_{\mathcal{X}},
\end{align*}
}
where in the above estimates, we have assigned the $L^2$-norm for the term with the highest order derivative. 
On the other hand, writing
\begin{equation*}
    \partial_x^{\ell_j}(|u|^2-|v|^2)=\partial_x^{\ell_j}\big((u-v)\overline{u}\big)+\partial_x^{\ell_j}\big(v(\overline{u}-\overline{v})\big),
\end{equation*}
we apply \eqref{estim2} together with the above arguments to obtain
\begin{align*}
\|\langle x \rangle^n \mathcal{A}_3\|_{L^2}+\|\langle x \rangle^n\mathcal{A}_4\|_{L^2} 
          \leq & \sum_{\gamma=1}^k\sum_{\beta=1}^{\gamma}\sum_{\substack{\ell_1+\dots+\ell_{\beta}=\gamma \\ \ell_j \geq 1}}\sum_{\substack{0\leq m_j \leq \ell_j \\ 1\leq j \leq \beta}} 2k^2((n+1)C)^{2\beta}|c_{\gamma,\ell_1,\dots,\ell_{\beta}}|\tbinom{k}{\gamma}\tbinom{\ell_1}{m_1}\dots \tbinom{\ell_{\beta}}{m_{\beta}}\\
        &\hspace{.1cm}\times\big(\lambda^{-|\alpha-2\beta|}+(\|u\|_{\mathcal{X}}+\|v\|_{\mathcal{X}})^{|\alpha-2\beta|}\big)\big(\|u\|_{\mathcal{X}}+\|v\|_{\mathcal{X}}\big)^{2\beta}\|u-v\|_{\mathcal{X}}.
    \end{align*}
Collecting above estimates for $\mathcal{A}_j$, $j=1,\dots, 4$, completes the bound for $\|\langle x\rangle^n\partial_x^k(|u|^{\alpha}u-|v|^{\alpha}v)\|_{L^2}$, $1\leq k \leq r$.

{$\bullet$ \bf Unweighted $L^2$-norm estimates.} We first notice that the definition of the space $\mathcal{X}$, together with the interpolation inequality Lemma \ref{interlemma} imply that for all $1\leq l\leq M$, there exists a positive integer $j_l$  with $1\leq j_l\leq r$ such that
\begin{equation}\label{interequ}
   \| \langle x\rangle^{n\big(\frac{M-l}{M-j_l}\big)}\partial_x^l u\|_{L^2}\leq C\|u\|_{\mathcal{X}},
\end{equation}
where $C>0$ depends on $M,r,n$, but is independent of $\alpha$.  For example, when $1\leq l\leq r$, using the definition of the space $\mathcal{X}$, one can take $j_l=l$, and $C=1$ in \eqref{interequ}. When $l\geq r+1$, by Lemma \ref{interlemma}, one can take $j_l=r$ in \eqref{interequ}. Thus, for given $1\leq l\leq M-1$, Sobolev embedding and \eqref{interequ} yield
\begin{equation}\label{interequ2}
\begin{aligned}
   \| \langle x\rangle^{n\big(\frac{M-(l+1)}{M-j_{l+1}}\big)}\partial_x^l u\|_{L^{\infty}}\leq & C\| \langle x\rangle^{n\big(\frac{M-l}{M-j_{l}}\big)}\partial_x^l u\|_{L^{2}}+C\| \langle x\rangle^{n\big(\frac{M-(l+1)}{M-j_{l+1}}\big)}\partial_x^{l+1} u\|_{L^{2}} 
   \leq C\|u\|_{\mathcal{X}}.    
\end{aligned}
\end{equation}
We estimate \eqref{interequ2}: setting $r+1\leq k \leq M$, without incorporating the weight $\langle x \rangle^n$, we argue as in \eqref{estimexpre1} to obtain
\begin{equation*}
    \begin{aligned}
        \| \partial_x^k (|u|^{\alpha}u)\|_{L^{2}}
        \leq &\sum_{\gamma=1}^k\sum_{\beta=1}^{\gamma}\sum_{\substack{\ell_1+\dots+\ell_{\beta}=\gamma \\ \ell_j \geq 1}{}} \sum_{\substack{ 0\leq m_j\leq \ell_j \\ j=1,\dots, \beta}} \tbinom{\ell_1}{m_1}\dots \tbinom{\ell_{\beta}}{m_{\beta}} \tbinom{k}{\gamma}|c_{\gamma,\ell_1,\dots,\ell_{\beta}}|\\
        &\hspace{.2cm} \times \big\| |u|^{\alpha-2\beta}\partial_{x}^{m_1}u \partial_x^{\ell_1-m_1}u\dots\partial_{x}^{m_{\beta}}u\partial_{x}^{\ell_{\beta}-m_{\beta}}u \partial_x^{k-\gamma}u \big\|_{L^{2}}+\big\| |u|^{\alpha} \partial_x^k u \big\|_{L^{2}}.
    \end{aligned}
\end{equation*}
To estimate the above expression, we note that the factors in the above sum are of the form 
\begin{equation}\label{restricond0}
|u|^{\alpha-2\beta}\underbrace{\partial_{x}^{\nu_1}u\dots \partial_{x}^{\nu_{2\beta+1}}u}_{2\beta+1-\text{terms}},
\end{equation}
where
\begin{equation}\label{restricond}
\left\{\begin{aligned}
&r+1\leq k \leq M, \\
&1\leq \beta\leq k, \\
&\nu_1+\nu_2+\dots +\nu_{2\beta+1}=k.
\end{aligned}\right.    
\end{equation}
Without lost of generality, we can assume that the highest order derivative is $\nu_{2\beta+1}$ in \eqref{restricond0}. Then keeping the $L^2$-norm of $\partial_{x}^{\nu_{2\beta+1}}u$ with the weight provided by \eqref{interequ} and taking the $L^{\infty}$-norm of all other terms $\partial_{x}^{\nu_{w}}u$, $w=1,\dots, 2\beta$, with the weight given by \eqref{interequ2}, yields 
\begin{equation*}
\begin{aligned}
\big\| |u|^{\alpha-2\beta} & \partial_{x}^{\nu_1}u\dots \partial_{x}^{\nu_{2\beta+1}}u \big\|_{L^2} \\
\leq & (\lambda^{-|\alpha-2\beta|}+\|u\|_{\mathcal{X}}^{|\alpha-2\beta|}) \|\langle x \rangle^{-n(\alpha-2\beta)}\partial_{x}^{\nu_1}u\dots \partial_{x}^{\nu_{2\beta+1}}u\|_{L^2}\\ 
\leq & C^{2\beta}(\lambda^{-|\alpha-2\beta|}+\|u\|_{\mathcal{X}}^{|\alpha-2\beta|})\|\langle x\rangle^{-n(\alpha-2\beta)-n\Big(\frac{M-(\nu_1+1)}{M-j_{(\nu_1+1)}}\Big)\dots -n\Big(\frac{M-(\nu_{2\beta}+1)}{M-j_{(\nu_{2\beta}+1)}}\Big)-n\Big(\frac{M-\nu_{2\beta+1}}{M-j_{(\nu_{2\beta+1})}}\Big)}\|_{L^{\infty}}\\
& ~ \times \Big(\sum_{l=1}^{M-1}\| \langle x\rangle^{n\big(\frac{M-(l+1)}{M-j_{l+1}}\big)}\partial_x^l u\|_{L^{\infty}}+\|\langle x\rangle^n u\|_{L^{\infty}}\Big)^{2\beta}\Big( \sum_{l=1}^M\| \langle x\rangle^{n\big(\frac{M-l}{M-j_l}\big)}\partial_x^l u\|_{L^2}\Big),
\end{aligned}    
\end{equation*}
where we also used \eqref{estim1}. Consequently, the above expression is bounded as desired (with a constant independent of $\alpha>0$), if 
\begin{equation*}
\Gamma :=2\beta-\alpha-\Big(\frac{M-(\nu_1+1)}{M-j_{(\nu_1+1)}}\Big)\dots -\Big(\frac{M-(\nu_{2\beta}+1)}{M-j_{(\nu_{2\beta}+1)}}\Big)-\Big(\frac{M-\nu_{2\beta+1}}{M-j_{(\nu_{2\beta+1})}}\Big)\leq 0.    
\end{equation*}
Observe that $\Gamma\leq 0$ holds true, since $1\leq j_{(\nu_m+1)}, j_{(\nu_{2\beta+1})}\leq r$, $m=1\dots,2\beta$, and \eqref{restricond} imply
\begin{equation*}
\begin{aligned}
 \Gamma\leq& 2\beta-\alpha-\frac{1}{M-1}\big((2\beta+1)M-l_1-\dots-l_{2\beta+1}-2\beta\big)\\
=& 2\beta-\alpha-\frac{1}{M-1}\big((2\beta+1)M-k-2\beta\big) \\
=& -\alpha-1+\frac{(k-1)}{M-1}\leq -\alpha< 0.
\end{aligned}
\end{equation*}
Summarizing, we conclude that there exists a constant $C>0$ independent of $\alpha$ such that
\begin{equation}\label{eqfisrt5}
\begin{aligned}
\|\partial_x^k(|u|^{\alpha}u)\|_{L^2}
\leq 
\sum_{\gamma=1}^k\sum_{\beta=1}^{\gamma}
\sum_{\substack{\ell_1+\dots+\ell_{\beta}=\gamma \\ 
\ell_j \geq 1}{}} 
&\sum_{\substack{ 0\leq m_j\leq \ell_j \\ j=1,\dots, \beta}} \tbinom{\ell_1}{m_1}\dots \tbinom{\ell_{\beta}}{m_{\beta}} \tbinom{k}{\gamma}|c_{\gamma,\ell_1,\dots,\ell_{\beta}}|(\lambda^{-|\alpha-2\beta|}+\|u\|_{\mathcal{X}}^{|\alpha-2\beta|})\\
        & \hspace{3cm}
        \times C^{2\beta}\|u\|_{\mathcal{X}}^{2\beta+1}
        +\|u\|_{\mathcal{X}}^{\alpha+1}.
    \end{aligned}
\end{equation}
Next, we compute the $H^s$-norm of the difference $|u|^{\alpha}u-|v|^{\alpha}v$, for $u,v \in \mathcal{X}$. For $0\leq k \leq M$ the estimate follows from the decomposition \eqref{identdiff}, and proceeding as in each $\mathcal{A}_j$ estimate, $1\leq j\leq 4$, as well as the analysis carried out for \eqref{eqfisrt5}. A key observation is that the interpolation inequalities \eqref{interequ} and \eqref{interequ2} are also valid for the difference $u-v$. 
In summary, we conclude
\begin{align*}
\|\partial_x^k(|u|^{\alpha}u-|v|^{\alpha}v)\|_{L^2} 
        & \, \leq \, |\alpha|\big(\lambda^{-|\alpha-1|}+(\|u\|_{\mathcal{X}}+\|v\|_{\mathcal{X}})^{|\alpha-1|}\big)\|u\|_{\mathcal{X}}\|u-v\|_{\mathcal{X}} 
+\|v\|_{\mathcal{X}}^{\alpha}\|u-v\|_{\mathcal{X}}\\
&+\sum_{\gamma=1}^k\sum_{\beta=1}^{\gamma}\sum_{\substack{\ell_1+\dots+\ell_{\beta}=\gamma \\ \ell_j \geq 1}}\sum_{\substack{0\leq m_j \leq \ell_j \\ 1\leq j \leq \beta}} ((n+1)C)^{2\beta}|c_{\gamma,\ell_1,\dots,\ell_{\beta}}|\tbinom{k}{\gamma}\tbinom{\ell_1}{m_1}\dots \tbinom{\ell_{\beta}}{m_{\beta}}\\
        &\hspace{2cm}\times\Big(|\alpha-2\beta|\big(\lambda^{-|\alpha-2\beta-1|}+(\|u\|_{\mathcal{X}}+\|v\|_{\mathcal{X}})^{|\alpha-2\beta-1|}\big)\|u\|_{\mathcal{X}}^{2\beta+1}\\
        &\hspace{2.3cm}+ 2k^2\big(\lambda^{-|\alpha-2\beta|}+(\|u\|_{\mathcal{X}}+\|v\|_{\mathcal{X}})^{|\alpha-2\beta|}\big)\big(\|u\|_{\mathcal{X}}+\|v\|_{\mathcal{X}}\big)^{2\beta}\Big)\|u-v\|_{\mathcal{X}},
\end{align*}
completing the proof of the lemma.
\end{proof}

\section{Proof of local well-posedness in space ${\mathcal X}$
}\label{SectionLWPX}

We are now ready to prove the local well-posedness (Theorem \ref{LWP-X}) in the space ${\mathcal X}$ defined in \eqref{Xspace}.
We use the contraction mapping principle based on the linear and nonlinear estimates in Lemmas \ref{linearEst} and \ref{lemmanonlEs}. 

For $R >0$, $T>0$, and $\lambda>0$, define the space $\mathcal{E}_{R,T}$ by 
    \begin{equation}{\label{Espace}}
        \begin{aligned}
            \mathcal{E}_{R,T}  =  \Big\{v &\in C([-T,T]; \mathcal{X}): \\
            &\sup_{t\in [-T,T]}\|v(t)\|_{\mathcal{X}}=\sup_{t\in [-T,T]}\bigg( \|\langle x \rangle^n v(t)\|_{L^{\infty}} + \sum_{k=1}^{r} \|\langle x \rangle^{n} \partial_{x}^{k} v(t)\|_{L^2} + { \sum_{k=r+1}^{M} \| \partial_{x}^{k} v(t)\|_{L^2}}\bigg) \leq R,\\
            &\inf_{(x,t)\in \mathbb{R}\times[-T,T]}|\langle x \rangle^n v(x,t)| \geq \frac{\lambda}{2}\Big\}.
        \end{aligned}
    \end{equation}
Observe that $\mathcal{E}_{R,T}$, equipped with the function $ \sup \limits_{t\in[-T,T]}\|u(t)-v(t)\|_{\mathcal{X}}$, is a complete metric space. The Duhamel's formula, or the integral equation, associated to \eqref{NLS} is
    \begin{equation}{\label{Phi}}
        \Phi(u(t))(t) =e^{it\partial_x^2}u_0 +i\int_0^t     e^{i(t-\tau)\partial_x^2}\mathcal{N}(u(\tau))u(\tau) \, d\tau,
    \end{equation}
with $\Phi$ acting on the space $\mathcal{X}$. We aim at finding $R>0$ and small enough $T>0$ such that $\Phi$ defines a contraction on the complete metric space $\mathcal{E}_{R,T}$. Lemmas \ref{linearEst} and \ref{lemmanonlEs} imply that there exists a constant $C_1>0$ independent of $\alpha_k$ such that
\begin{equation}\label{estimcontrac1}
    \begin{aligned}
        \|\Phi (u(t))\|_{\mathcal{X}}  
        \leq & C_1\langle t \rangle^{n+1} \|u_0\|_{\mathcal{X}}+C\int_0^{|t|} \langle t-\tau \rangle^{n+1}\|\mathcal{N}(u(\tau))u(\tau)\|_{\mathcal{X}}\, d \tau \\
        \leq & C_1\langle t \rangle^{n+1} \|u_0\|_{\mathcal{X}}+C\sum_{k=0}^{\infty}|a_k|\int_0^{|t|} \langle t-\tau \rangle^{n+1}\||u|^{\alpha_k}(\tau)u(\tau)\|_{\mathcal{X}}\, d \tau \\
       \leq & C_1\langle T \rangle^{n+1} \|u_0\|_{\mathcal{X}}+CT\langle T \rangle^{n+1} \sum_{k=0}^{\infty}|a_k| \big(\sup_{t\in[-T,T]}\|u(t)\|_{\mathcal{X}}\big)^{\alpha_k+1}\\
       &+CT\langle T \rangle^{n+1} \sum_{k=0}^{\infty}\sum_{1\leq \beta\leq M}|a_k||C_{\beta,\alpha_k}| \\
       &\hspace{2cm}\times \Big(\Big(\frac{\lambda}{2}\Big)^{-|\alpha_k-2\beta|}+(\sup_{t\in[-T,T]}\|u(t)\|_{\mathcal{X}})^{|\alpha_k-2\beta|}\Big)\big(\sup_{[-T,T]}\|u(t)\|_{\mathcal{X}}\big)^{2\beta+1}\\
       \leq & C_1\langle T \rangle^{n+1} \|u_0\|_{\mathcal{X}}+T\langle T \rangle^{n+1} \mathcal{G}_1(R),
    \end{aligned}
\end{equation}
where $C>0$ is also independent of $\alpha_k$ for all $k$, and $\mathcal{G}_1(R)$ is defined by
\begin{equation}\label{G1function}
    \begin{aligned}
       \mathcal{G}_1(R):=& C \sum_{k=0}^{\infty}|a_k|R^{\alpha_k+1}+C \sum_{k=0}^{\infty}\sum_{1\leq \beta\leq M}|a_k||C_{\beta,\alpha_k}| \Big(\big(\tfrac{\lambda}{2}\big)^{-|\alpha_k-2\beta|}+R^{|\alpha_k-2\beta|}\Big)R^{2\beta+1}  
    \end{aligned}
\end{equation}
with
\begin{equation}\label{constadiff}
    |C_{\beta,\alpha_k}|\leq \underbrace{\Big|\tfrac{\alpha_k}{2}\Big(\tfrac{\alpha_k}{2}-1\Big)\dots \Big(\tfrac{\alpha_k}{2}-(\beta-1)\Big)\Big|}_{\beta-\text{times}}.
\end{equation}
Setting $R=2^{n+2}C_1\|u_0\|_{\mathcal{X}}$ and $0<T<1$, it follows that $\|\Phi(u(t))\|_{\mathcal{X}}\leq R$ if
\begin{equation}\label{contraceq1}
    \begin{aligned}
        T\langle T \rangle^{n+1} \mathcal{G}_1(R)\leq \frac{R}{2}.
    \end{aligned}
\end{equation}
Thus, there exists a small time $0<T<1$, where the above condition holds provided that
\begin{equation}
    \sum_{k=0}^{\infty}|a_k|R^{\alpha_k}<\infty,
\end{equation}
and
\begin{equation*}
    \sum_{k=0}^{\infty}\sum_{1\leq \beta\leq M}|a_k||C_{\beta,\alpha_k}| \Big(\big(\tfrac{\lambda}{2}\big)^{-|\alpha_k-2\beta|}+R^{|\alpha_k-2\beta|}\Big)<\infty.
\end{equation*}
Note that these conditions hold by the assumption \eqref{coeffcond}. On the other hand, by Lemmas \ref{linearEst} and \ref{lemmanonlEs}, we get
\begin{equation}\label{differenequa1}
\begin{aligned}
\|\Phi (u(t))&- \Phi(v(t))\|_{\mathcal{X}}\\
= &  \Big\| \int_0^t e^{i(t-\tau)\partial_x^2} \Big(\mathcal{N}(u(\tau))u(\tau)-\mathcal{N}(v(\tau))v(\tau)\Big)\, d\tau \Big\|_{\mathcal{X}} \\
\leq & \, CT \langle T \rangle^{n+1}\sum_{k=0}^{\infty} |a_k| |\alpha_k|\Big\{\big(\tfrac{\lambda}{2}\big)^{-|\alpha_k-1|}
+\big(\sup_{t\in [-T,T]}\|u(t)\|_{\mathcal{X}}+\sup_{t\in [-T,T]}\|v(t)\|_{\mathcal{X}})^{|\alpha_k-1|}\big) \Big\}\\
    &\hspace{4cm}\times\big(\sup_{t\in [-T,T]}\|u(t)\|_{\mathcal{X}}\big)\big(\sup_{t\in [-T,T]}\|u(t)-v(t)\|_{\mathcal{X}}\big)\\
    &+CT \langle T \rangle^{n+1}\sum_{k=0}^{\infty}|a_k|\big(\sup_{t\in[-T,T]}\|u(t)\|_{\mathcal{X}}+\sup_{t\in[-T,T]}\|v(t)\|_{\mathcal{X}}\big)^{\alpha_k}\Big(\sup_{t\in[-T,T]}\|u(t)-v(t)\|_{\mathcal{X}}\Big)\\
  &+T \langle T \rangle^{n+1}\sum_{k=0}^{\infty}\sum_{1\leq \beta\leq M} |a_k|C_{\beta,\alpha_k}\\
    &\hspace{1cm}\times\bigg\{|\tfrac{\alpha_k}{2}-\beta|\Big(\big(\tfrac{\lambda}{2}\big)^{-|\alpha_k-2\beta-1|}+(\sup_{t\in[-T,T]}\|u(t)\|_{\mathcal{X}}+\sup_{t\in[-T,T]}\|v(t)\|_{\mathcal{X}})^{|\alpha_k-2\beta-1|}\Big)\\
    &\hspace{4cm}\times\big(\sup_{t\in[-T,T]}\|u(t)\|_{\mathcal{X}}+\sup_{t\in[-T,T]}\|v(t)\|_{\mathcal{X}}\big)^{2\beta+1}\\
    &\hspace{1cm} +\Big(\big(\tfrac{\lambda}{2}\big)^{-|\alpha_k-2\beta|}+(\sup_{t\in[-T,T]}\|u(t)\|_{\mathcal{X}}+\sup_{t\in[-T,T]}\|v(t)\|_{\mathcal{X}})^{|\alpha_k-2\beta|}\Big)\\
    &\hspace{4cm}\times \big(\sup_{t\in[-T,T]}\|u(t)\|_{\mathcal{X}}+\sup_{t\in[-T,T]}\|v(t)\|_{\mathcal{X}}\big)^{2\beta}\bigg\}\Big(\sup_{t\in[-T,T]}\|u(t)-v(t)\|_{\mathcal{X}}\Big)\\
    \leq & \, T \langle T \rangle^{n+1}\mathcal{G}_2(R)\Big(\sup_{t\in[-T,T]}\|u(t)-v(t)\|_{\mathcal{X}}\Big),
\end{aligned}    
\end{equation}
where
\begin{equation}\label{G2def}
    \begin{aligned}
        \mathcal{G}_2(R)= & C\sum_{k=0}^{\infty} |a_k| |\alpha_k|\Big(\big(\tfrac{\lambda}{2}\big)^{-|\alpha_k-1|}+(2R)^{|\alpha_k-1|}\Big) R+C\sum_{k=0}^{\infty}|a_k|(2R)^{\alpha_k}\\
    &+\sum_{k=0}^{\infty}\sum_{1\leq \beta\leq M} |a_k|C_{\beta,\alpha_k}\bigg(|\tfrac{\alpha_k}{2}-\beta|\Big(\big(\tfrac{\lambda}{2}\big)^{-|\alpha_k-2\beta-1|}+(2R)^{|\alpha_k-2\beta-1|}\Big)(2R)^{2\beta+1} \\
    &\hspace{4cm} 
    +\Big(\big(\tfrac{\lambda}{2}\big)^{-|\alpha_k-2\beta|}+(2R)^{|\alpha_k-2\beta|}\Big)(2R)^{2\beta}\bigg),
    \end{aligned}
\end{equation}
with $C>0$ being independent of $\alpha_k$, and $C_{\beta,\alpha_k}>0$ satisfying \eqref{constadiff}. Then $\mathcal{G}_2(R)<\infty$, if
{\small
\begin{equation*}
    \begin{aligned}
        &\bullet \quad \sum_{k=0}^{\infty} |a_k| |\alpha_k|\Big(\big(\tfrac{\lambda}{2}\big)^{-|\alpha_k-1|}+(2R)^{|\alpha_k-1|}\Big)<\infty, \\
        &
        \bullet \quad \sum_{k=0}^{\infty}|a_k|(2R)^{\alpha_k}<\infty,\\
    & \bullet \quad \sum_{k=0}^{\infty}\sum_{1\leq \beta\leq M} |a_k|C_{\beta,\alpha_k}\Big\{ |\tfrac{\alpha_k}{2}-\beta| \Big(\big(\tfrac{\lambda}{2}\big)^{-|\alpha_k-2\beta-1|}+(2R)^{|\alpha_k-2\beta-1|}\Big) 
    +\Big(\big(\tfrac{\lambda}{2}\big)^{-|\alpha_k-2\beta|}+(2R)^{|\alpha_k-2\beta|}\Big)\Big\}<\infty.
    \end{aligned}
\end{equation*}
}
The above three conditions are satisfied by the assumption \eqref{coeffcond}.  Thus, by taking $T>0$ small such that
\begin{equation}\label{contraceq2} 
    \begin{aligned}
        T \langle T \rangle^{n+1}\mathcal{G}_2(R)\leq \tfrac{1}{2},
    \end{aligned} 
\end{equation}
we obtain
\begin{equation}\label{contraceq2.1} 
   \begin{aligned}
   \sup_{t\in[-T,T]}\|\Phi(u)(t)-\Phi(v)(t)\|_{\mathcal{X}} \leq \frac{1}{2} \sup_{t\in[-T,T]}\|u(t)-v(t)\|_{\mathcal{X}}.
   \end{aligned}
\end{equation}
Next, we use \eqref{eqliner2} and \eqref{estimcontrac1} to deduce
\begin{equation}\label{eqinfcond1}
    \begin{aligned}
        |\langle x \rangle^n \Phi(u)(x,t)| 
        \geq & |\langle x \rangle^n u_0(x)| -\|\langle x \rangle^n(e^{it\partial_x^2}u_0-u_0)\|_{L^{\infty}}-\|\int_0^t e^{i(t-\tau)\partial_x^2}\mathcal{N}(u(\tau))u(\tau)\, d\tau\|_{\mathcal{X}}\\
        \geq & \lambda-CT\langle T \rangle^n\|u_0\|_{\mathcal{X}}-T\langle T \rangle^{n+1}\mathcal{G}_1(R).
    \end{aligned}
\end{equation}
Thus, taking $T>0$ sufficiently small so that
\begin{equation}\label{contraceq3}
    CT\langle T \rangle^nR+T\langle T \rangle^{n+1}\mathcal{G}_1(R)\leq \tfrac{\lambda}{2},
\end{equation}
we obtain 
$$
|\langle x \rangle^n \Phi(u)(x,t)|\geq \tfrac{\lambda}{2}.
$$
Finally, taking $T>0$ small enough such that \eqref{contraceq1}, \eqref{contraceq2} and \eqref{contraceq3} hold true, we establish that $\Phi:\mathcal{E}_{R,T}\rightarrow \mathcal{E}_{R,T}$ defines a contraction on the complete metric space $\mathcal{E}_{R,T}$. Therefore, the existence of a solution for the integral equation associated to \eqref{NLS} follows from Banach fixed-point theorem. 

Let us now prove uniqueness in the class $C([-T,T];\mathcal{X})$ with condition \eqref {infcondsolut} (which is a \textit{larger} class than $\mathcal{E}_{R,T}$ in the contraction mapping argument). Assume that there are two solutions $u, v \in C([-T,T];\mathcal{X})$ of \eqref{NLS} with the same initial condition $u_0 \in \mathcal{X}$, satisfying \eqref{Xinf}. Take $R_1>0$ so that
\begin{equation*}
    \sup_{t\in [-T,T]}\big(\|u(t)\|_{\mathcal{X}}+\|v(t)\|_{\mathcal{X}}\big)\leq R_1.
\end{equation*}
Then by \eqref{infcondsolut}, it follows that
\begin{equation*}
    \inf_{(x,t)\in \mathbb{R}\times [-T,T]}|\langle x \rangle^{n}u(x,t)|\geq \frac{\lambda}{2} \quad \mbox{and} \quad \inf_{(x,t)\in \mathbb{R}\times [-T,T]}|\langle x \rangle^{n}v(x,t)|\geq \frac{\lambda}{2}.
\end{equation*}
Let $0<T_1\leq T$, using the integral formulation of $u$ and $v$, and arguing as in \eqref{differenequa1}, we have that for all $t\in[-T_1,T_1]$, 
\begin{equation}\label{ineqUNiqueness}
\|u(t)- v(t)\|_{\mathcal{X}}\leq  \, T_1 \langle T_1 \rangle^{n+1}\mathcal{G}_2(R_1)\Big(\sup_{t\in[-T_1,T_1]}\|u(t)-v(t)\|_{\mathcal{X}}\Big),   
\end{equation}
with $\mathcal{G}_2(\cdot)$ as in \eqref{G2def}, which we remark is independent of $T,T_1,t$. Then, if $0<T_1\leq T$ is such that $T_1 \langle T_1 \rangle^{n+1}\mathcal{G}_2(R_1)\leq \frac{1}{2}$, we have from \eqref{ineqUNiqueness} that
\begin{equation*}
    u\equiv v \text{ on  } [-T_1,T_1].
\end{equation*}
The idea now is to iterate the previous argument in steps of size $T_1$ until we extend uniqueness to the whole interval $[-T,T]$. To see this, if $T_1=T$, we are done. Otherwise, let $T_2=\min\{T,2T_1\}$. Then for any $T_1\leq t\leq T_2$, we write 
\begin{equation*}
\begin{aligned}
 u(t)=e^{i(t-T_1)\partial_x^2}u(T_1)+i\int_{T_1}^t e^{i(t-\tau)\partial_x^2}\mathcal{N}(u(\tau))u(\tau) \, d\tau, \\
v(t)=e^{i(t-T_1)\partial_x^2}v(T_1)+i\int_{T_1}^t e^{i(t-\tau)\partial_x^2}\mathcal{N}(v(\tau))v(\tau) \, d\tau. \end{aligned}
\end{equation*}
From the previous identities, and using that $u(T_1)=v(T_1)$ along with the argument in \eqref{differenequa1}, it follows that for all $T_1\leq t\leq T_2$
\begin{equation}\label{ineqUNiqueness2}
\begin{aligned}
\|u(t)- v(t)\|_{\mathcal{X}}\leq &  \, (T_2-T_1) \langle T_2-T_1 \rangle^{n+1}\mathcal{G}_2(R_1)\Big(\sup_{t\in[-T_2,T_2]}\|u(t)-v(t)\|_{\mathcal{X}}\Big) \\
\leq & \frac{1}{2}\Big(\sup_{t\in[-T_2,T_2]}\|u(t)-v(t)\|_{\mathcal{X}}\Big),    
\end{aligned}
\end{equation}
where we have also used that $T_2-T_1\leq T_1$, and thus $(T_2-T_1) \langle T_2-T_1 \rangle^{n+1}\mathcal{G}_2(R_1)\leq \frac{1}{2}$. A similar estimate can be deduced for negative times $-T_2\leq t \leq -T_1$. Hence, from \eqref{ineqUNiqueness2} it must follow that $ u\equiv v$ on $[-T_2,T_2]$.

Consequently, since $T>0$ is finite, the previous argument is repeated a finite number of times, say, up to the smallest integer $k$ such that $k\,T_1\geq T$, completing the proof.

Finally, the continuity of the data-to-solution map of \eqref{NLS} follows from the fact that the time of existence of solutions is continuous with respect to the initial data (see \eqref{contraceq1}, \eqref{contraceq2} and \eqref{contraceq3}), and using the same arguments as in the proof of \eqref{contraceq2.1}, we complete the proof of Theorem \ref{LWP-X}.


\section{Global results for the data in space $\mathcal X$ with quadratic phase}
\label{SectionScat}

Following the strategy in \cite{CazNaum2016} to deduce Theorem \ref{scatres}, we apply the pseudo-conformal transformation (invariance in the $L^2$ critical case) to the Cauchy problem \eqref{NLS} (e.g., see \cite[Proposition 4.1]{LinaresPonce2015} or \cite[Exercise 2.28]{Tao2006}). 

We next introduce some useful results from the fractional calculus. 
Let $s\in (0,1)$ and define one of the Stein's fractional derivatives by
\begin{equation}\label{Steinsquare}
\mathcal{D}^s f(x)=\left(\int_{\mathbb{R}}\frac{|f(x)-f(y)|^2}{|x-y|^{1+2s}}\, dy\right)^{1/2}, \quad  x\in \mathbb{R},
\end{equation}
where $f$ is a sufficiently regular function $f$. 
We remind the reader the notation $D^s=(-\partial_x^2)^{\frac{s}{2}}$.

To deal with fractional weights, we recall the following characterization of the spaces $L^p_s(\mathbb{R}^N) \equiv J^{-s}L^p(\mathbb{R}^N)$.

\begin{theorem}[\cite{S1961}]\label{TheoSteDer} 
Let $s\in (0,1)$ and $\frac{2}{1+2s}<p<\infty$. Then $f\in L_s^p(\mathbb{R})$ if and only if
\begin{itemize}
\item[(i)]  $f\in L^p(\mathbb{R})$ and
\item[(ii)]$\mathcal{D}^s f \in L^{p}(\mathbb{R})$
\end{itemize}
with 
\begin{equation*}
\|J^s f\|_{L^p}=\|(1-\partial_x^2)^{\frac{s}{2}} f \|_{L^p} \sim \|f\|_{L^p}+\|\mathcal{D}^s f\|_{L^p} \sim \|f\|_{L^p}+\|D^s f\|_{L^p}.
\end{equation*} 
\end{theorem}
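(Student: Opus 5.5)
The plan follows Stein's original argument: treat the square function $\mathcal{D}^s$ as a vector-valued singular integral in one direction, and compare it with a fractional Littlewood--Paley $g$-function in the other. Throughout, the equivalence $\|f\|_{L^p}+\|J^s f\|_{L^p}\sim \|f\|_{L^p}+\|D^sf\|_{L^p}$ for $1<p<\infty$ is taken as standard. It follows from the Mikhlin multiplier theorem applied to the symbols $\langle\xi\rangle^{s}/(1+|\xi|^{s})$ and its reciprocal. So it suffices to prove $\|\mathcal{D}^sf\|_{L^p}\lesssim \|D^s f\|_{L^p}$ for all $1<p<\infty$, and $\|D^sf\|_{L^p}\lesssim \|\mathcal{D}^s f\|_{L^p}$ when $p>\frac{2}{1+2s}$. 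Both can first be done for Schwartz $f$ and then extended by density and Fatou's lemma.

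\emph{Step 1 (necessity, all $1<p<\infty$).} Write $f=D^{-s}g$ with $g = D^s f$. Consider the operator $T g(x)=\big(f(x+y)-f(x)\big)|y|^{-\frac12-s}$, which takes values in $\mathcal{H}=L^2(\mathbb{R},dy)$, so that $\mathcal{D}^s f(x)=\|Tg(x)\|_{\mathcal{H}}$. By Plancherel and the identity
\[
\int_{\mathbb{R}}\frac{|e^{iy\xi}-1|^2}{|y|^{1+2s}}\,dy=c_s|\xi|^{2s},
\]
we get $\|\mathcal{D}^s f\|_{L^2}=c_s^{1/2}\|g\|_{L^2}$. The kernel of $T$ is
\[
K(x,\cdot)=\big(|x+\cdot|^{s-1}-|x|^{s-1}\big)c\,|\cdot|^{-\frac12-s}
\]
(Riesz potential kernel), with values in $\mathcal{H}$. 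I would verify the Hörmander condition $\int_{|x|>2|h|}\|K(x-h)-K(x)\|_{\mathcal{H}}\,dx\lesssim 1$ by splitting the $y$-integral into $|y|<|x|/2$ and $|y|>|x|/2$ and using homogeneity. Vector-valued Calder\'on--Zygmund theory then gives $L^p$ boundedness for all $1<p<\infty$.

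\emph{Step 2 (sufficiency).} Let $u(x,t)=P_t*f$ be the Poisson extension, and define the fractional $g$-functions
\[
g_s f(x)=\Big(\int_0^\infty t^{1-2s}|\partial_t u(x,t)|^2\,dt\Big)^{1/2},
\qquad
g^{*}_{\lambda,s}f(x)=\Big(\iint_{\mathbb{R}^2_+}\Big(\tfrac{t}{t+|x-z|}\Big)^{\lambda}t^{-2s}|\nabla u(z,t)|^2\,dz\,dt\Big)^{1/2}.
\]
Applying standard Littlewood--Paley theory to $D^s f$ gives $\|D^sf\|_{L^p}\lesssim\|g_s f\|_{L^p}$ for $1<p<\infty$. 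The same holds for the pointwise larger $g^{*}_{\lambda,s}$ via $g_s\lesssim g^*_{\lambda,s}$, interpreting the relevant quantities through $u$ as usual. Next I would write $\nabla u(z,t)=\int \nabla P_t(w)\,\big(f(z-w)-f(x)\big)\,dw$, using that $\nabla P_t$ has integral zero in the $x$-derivative and handling the $t$-derivative similarly. I would then insert $|\nabla P_t(w)|\lesssim t^{-2}(1+|w|/t)^{-2}$, apply Cauchy--Schwarz, and change variables $y=z-w-x$. The aim is the pointwise bound $g^{*}_{\lambda,s}f(x)\lesssim \mathcal{D}^s f(x) + (\text{a maximal-type average of } \mathcal{D}^s f)$. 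Integrating the $\lambda$-weight over $z$ produces the nonlocal averaging $\int |x-z|^{-\lambda}\,(\mathcal{D}^s f(z))^2\,dz$-type terms with $\lambda=1+2s$.

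\emph{Main obstacle.} The delicate step is the last estimate. The off-diagonal part of $g^*_{\lambda,s}$ must be controlled in $L^p$ by $\mathcal{D}^s f$, and the averaging kernel $(t/(t+|x-z|))^{\lambda}$ only gives the maximal-function bound $\|g^*_{\lambda,s}F\|_{L^p}\lesssim\|F\|_{L^p}$ when $p>2/\lambda=\frac{2}{1+2s}$. This is exactly where the threshold in the statement enters. I would use Stein's duality argument: test $(g^*_{\lambda,s})^2$ against $\phi\in L^{(p/2)'}$ and bound the $z$-average of $\phi$ by $M\phi$, the Hardy--Littlewood maximal function. This requires $(p/2)'<\infty$ and integrability of $(1+|x|)^{-\lambda}$ in the relevant regime. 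When $p<2$ one must instead use the $L^{p/2}$ weak/strong bounds of the averaging, as in Stein's proof of the $g^*_\lambda$ theorem. Combining the bound from the main obstacle with Step 1 yields the norm equivalence.
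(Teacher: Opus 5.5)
The paper does not prove this statement; it cites Stein. Measured against Stein's argument, your proposal puts the threshold in the wrong direction, and Step~1 is false as stated.

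For $s<\frac12$ and $1<p\le\frac{2}{1+2s}$, the bound $\|\mathcal{D}^s f\|_{L^p}\lesssim\|D^s f\|_{L^p}$ fails. Take $0\neq f\in C_c^\infty$ supported in $[-1,1]$. For $|x|\ge 2$ you get $\mathcal{D}^sf(x)^2=\int_{-1}^{1}|f(y)|^2|x-y|^{-1-2s}\,dy\sim\|f\|_{L^2}^2|x|^{-1-2s}$, so $\mathcal{D}^sf\notin L^p$, although $J^sf$ is a Schwartz function. Consistently, the H\"ormander verification you plan cannot succeed. For $s\le\frac12$ the Riesz kernel $|x|^{s-1}$ is not locally square integrable, so $\|K(x,\cdot)\|_{\mathcal{H}}=\infty$ for every $x\neq 0$, because of the singularity at $y=-x$. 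The vector-valued Calder\'on--Zygmund framework therefore does not even apply. Your Step~1 works only for $s>\frac12$, where $\frac{2}{1+2s}<1$ and the threshold is vacuous. For $s\le\frac12$ you have no proof of necessity except at $p=2$ via Plancherel, which is incidentally the only case the paper actually uses.

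The missing idea is Stein's treatment of necessity. With $u$ the Poisson extension of $f$, write $f(x+y)-f(x)=[u(x+y,0)-u(x+y,|y|)]+[u(x+y,|y|)-u(x,|y|)]+[u(x,|y|)-u(x,0)]$.
\begin{itemize}
\item Hardy's inequality in $|y|$ controls the last term by $g_sf(x)=(\int_0^\infty t^{1-2s}|\partial_tu(x,t)|^2\,dt)^{1/2}$.
\item The mean value property controls the middle term by the area function $(\iint_{|z-x|<4t}t^{-2s}|\nabla u(z,t)|^2\,dz\,dt)^{1/2}$.
\end{itemize}
Both are bounded by $\|D^sf\|_{L^p}$ for every $1<p<\infty$. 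The first term is evaluated at the moving point $x+y$. Use $|\int_0^r\phi\,dt|^2\le\frac{r^{1+a}}{1+a}\int_0^r t^{-a}|\phi|^2\,dt$ with $a>-1$, and set $z=x+y$. Its contribution is then $\iint_{t<|z-x|}|z-x|^{a-2s}t^{-a}|\partial_tu(z,t)|^2\,dz\,dt$. This is dominated by a fractional $g^*_\lambda$ function with $\lambda=2s-a<1+2s$, which is bounded on $L^p$ only for $p>2/\lambda$. Choosing $\lambda\in(\max\{1,2/p\},1+2s)$ is exactly where $p>\frac{2}{1+2s}$ enters.

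Conversely, sufficiency needs no restriction. Use $\int\partial_tP_t=0$, $|\partial_tP_t(w)|\lesssim(|w|+t)^{-2}$, and Cauchy--Schwarz with weights $|w|^{\mp a}$, $2s-1<a<1$. This gives the pointwise bound $g_sf\lesssim\mathcal{D}^sf$, and $\|D^sf\|_{L^p}\lesssim\|g_sf\|_{L^p}$ holds for all $1<p<\infty$. So the $g^*_\lambda$ machinery and the ``main obstacle'' you describe belong to Step~1, not Step~2.
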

When $p=2$ and $s\in (0,1)$, we use the following property,
\begin{equation} \label{prelimneq} 
\left\|\mathcal{D}^s(fg)\right\|_{L^2} \lesssim \left\|f\mathcal{D}^s g\right\|_{L^2}+\left\|g\mathcal{D}^sf \right\|_{L^2}.
\end{equation}

We will also use the following result from Nahas \& Ponce \cite{NahasPonce2009}.
\begin{theorem}[\cite{NahasPonce2009}]\label{derivexp} 
Let $s\in(0,1)$. For any $|t|>0$,
\begin{equation*}
\mathcal{D}^s\big(e^{it|x|^2}\big)\lesssim |t|^{\frac{s}{2}}+|t|^{s}|x|^s.
\end{equation*}
\end{theorem}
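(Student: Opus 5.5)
The idea is to compute the Stein derivative \eqref{Steinsquare} of $f_t(x)=e^{it|x|^2}$ directly, first scaling $t$ away and then splitting the $y$-integral into near and far regions. Throughout we may take $t>0$, since $|e^{-it x^2}-e^{-ity^2}|=|e^{itx^2}-e^{ity^2}|$.

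\textbf{Step 1: reduce to an increment.} Write $y=x+h$. Then
\begin{equation*}
|f_t(x)-f_t(x+h)|=|e^{it(2xh+h^2)}-1|\le \min\{2,\,t|2xh+h^2|\},
\end{equation*}
so that
\begin{equation*}
\big(\mathcal{D}^s f_t(x)\big)^2\le \int_{\mathbb{R}}\frac{\min\{4,\,t^2|2xh+h^2|^2\}}{|h|^{1+2s}}\,dh.
\end{equation*}

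\textbf{Step 2: scale out $t$.} Substitute $h=t^{-1/2}H$ and set $X=t^{1/2}x$. Then $t(2xh+h^2)=2XH+H^2$ and $dh/|h|^{1+2s}=t^{s}\,dH/|H|^{1+2s}$, which gives
\begin{equation*}
\big(\mathcal{D}^s f_t(x)\big)^2\le t^{s}F(X),\qquad F(X):=\int_{\mathbb{R}}\frac{\min\{4,|2XH+H^2|^2\}}{|H|^{1+2s}}\,dH.
\end{equation*}
It therefore suffices to show $F(X)\lesssim 1+|X|^{2s}$. Indeed, this yields
\begin{equation*}
\mathcal{D}^s f_t(x)\lesssim t^{s/2}(1+|X|^{s})=t^{s/2}+t^{s}|x|^s,
\end{equation*}
which is the claimed bound.

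\textbf{Step 3: bound $F$ by a near/far split.} Fix a cut-off $\rho>0$.
\begin{itemize}
\item On the far region $|H|>\rho$, use the bound $4$ to get a contribution $\lesssim \rho^{-2s}$; this is integrable at infinity since $s>0$.
\item On the near region $|H|\le\rho$, use $|2XH+H^2|^2\le 8X^2H^2+2H^4$ to get a contribution $\lesssim X^2\rho^{2-2s}+\rho^{4-2s}$; this is integrable at the origin since $s<1$.
\end{itemize}
Now choose $\rho$ according to the size of $|X|$.
\begin{itemize}
\item If $|X|\le1$, take $\rho=1$; then $F(X)\lesssim 1$.
\item If $|X|>1$, take $\rho=|X|^{-1}$; then $F(X)\lesssim |X|^{2s}+|X|^{2s}+|X|^{2s-4}\lesssim |X|^{2s}$.
\end{itemize}
In both cases $F(X)\lesssim 1+|X|^{2s}$, with implicit constants depending only on $s$.

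There is no serious obstacle in this argument. The only point needing care is the choice of cut-off $\rho\sim\min\{1,|X|^{-1}\}$, which balances the far and near contributions. The hypotheses $0<s<1$ are used exactly to make the two pieces of the integral converge.
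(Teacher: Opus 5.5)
Your proof is correct. The increment identity, the dilation $h=t^{-1/2}H$ (which produces exactly the factor $t^{s}$), and the cutoff $\rho=\min\{1,|X|^{-1}\}$ all check out, and $0<s<1$ is used precisely for convergence at $\infty$ and at $0$.

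The paper gives no argument of its own for this estimate; it simply quotes it from Nahas--Ponce. So your computation is a self-contained verification rather than a reconstruction of something in the text.

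A slightly shorter route avoids the case split in $|X|$. From $e^{i(a+b)}-1=e^{ia}(e^{ib}-1)+(e^{ia}-1)$ and Minkowski's inequality in $L^2(|h|^{-1-2s}\,dh)$,
\[
\mathcal{D}^s\big(e^{it|x|^2}\big)\le \Big(\int_{\mathbb{R}}\frac{|e^{ith^2}-1|^2}{|h|^{1+2s}}\,dh\Big)^{1/2}+\Big(\int_{\mathbb{R}}\frac{|e^{2itxh}-1|^2}{|h|^{1+2s}}\,dh\Big)^{1/2}.
\]
The substitutions $h=|t|^{-1/2}k$ and $h=(2|t||x|)^{-1}k$ then evaluate the two integrals exactly as $c_s|t|^{s/2}$ and $c_s|t|^{s}|x|^{s}$.

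That version shows directly that the two terms of the bound come from the quadratic and the linear parts of the phase increment $2xh+h^2$. However, it relies on this exact algebraic structure. Your near/far argument uses only $|e^{i\theta}-1|\le\min\{2,|\theta|\}$ and a size bound on the increment, so it would adapt with little change to non-quadratic phases.
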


We next apply the pseudo-conformal transformation to \eqref{NLS}, allowing us to obtain a global solution of \eqref{NLS}, which corresponds to a local solution of the following non-autonomous equation  
\begin{equation}\label{NLSsolution}
   \left\{ \begin{aligned}
        &iv_{t} + \partial_{x}^{2}v + \mathcal{N}_1(v) \,v = 0, \\
        &v(x,0) = v_{0}, 
    \end{aligned}\right.
\end{equation}
where, given $b>0$,
\begin{equation}\label{nonlineaNLS2}
    \mathcal{N}_1(v)=\sum_{k=0}^{\infty}d_k (1-bt)^{\frac{\alpha_k}{2}-2}|v|^{\alpha_k}.
\end{equation}
We establish the existence of solutions for \eqref{NLSsolution}.
\begin{theorem}\label{existnonautequ}
Let $n,r,M\in \mathbb{Z}^{+}$ be given by \eqref{deffparame} and consider the space $\mathcal{X}$ defined in \eqref{Xspace}. Additionally, let $v_0 \in \mathcal{X}$ be such that
\begin{equation*}
    \inf_{x\in \mathbb{R}}|v_0(x)|\geq \lambda>0.
\end{equation*}
Let $\{d_k\}$ be a sequence of real numbers, $\{\alpha_k\}$ be a sequence of real numbers such that $\alpha_k>2$, and for all $R_0>0$
	\begin{equation}
		\sum_{k=0}^{\infty}\sum_{0\leq \beta \leq M}\frac{|d_k|}{\alpha_k-2}\big(1+C(\alpha_k,\beta))(R_0^{|\alpha_k-2\beta|}+|\alpha_k-2\beta|R_0^{|\alpha_k-2\beta-1|})<\infty,
	\end{equation}
where $C(\alpha_k,\beta)$ is defined in \eqref{E:C(a,b)}. Then there exists a unique solution $v$ of \eqref{NLSsolution} with initial data $v_0$ such that 
\begin{equation}
    v\in C([0,|b|^{-1}];\mathcal{X}),
\end{equation}
provided that $b>0$ is sufficiently large.
\end{theorem}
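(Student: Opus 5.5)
We will prove Theorem \ref{existnonautequ} by running the contraction argument of Section \ref{SectionLWPX} for the Duhamel map
\begin{equation*}
\Psi(v)(t)=e^{it\partial_x^2}v_0+i\int_0^t e^{i(t-\tau)\partial_x^2}\mathcal{N}_1(v(\tau))v(\tau)\,d\tau ,
\end{equation*}
on the set $\mathcal{E}_{R,T}$ of \eqref{Espace} with $T=b^{-1}$. We take $R=2^{n+2}C_1\|v_0\|_{\mathcal{X}}$. Since $T=b^{-1}\leq 1$ once $b\geq 1$, the factor $\langle T\rangle^{n+1}\leq 2^{n+1}$ is harmless. The only new feature compared with Theorem \ref{LWP-X} is the time-dependent coefficient $(1-b\tau)^{\frac{\alpha_k}{2}-2}$. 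This coefficient is singular at $\tau=b^{-1}$ when $\alpha_k<4$, so the crude bound $T\langle T\rangle^{n+1}\mathcal{G}_1(R)$ must be replaced by one that integrates this weight.

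\textbf{Step 1 (time integral).} For $0\le t\le b^{-1}$ we compute
\begin{equation*}
\int_0^{t}(1-b\tau)^{\frac{\alpha_k}{2}-2}\,d\tau\le \int_0^{1/b}(1-b\tau)^{\frac{\alpha_k}{2}-2}\,d\tau=\frac{1}{b}\cdot\frac{2}{\alpha_k-2}.
\end{equation*}
This is finite precisely because $\alpha_k>2$, which is where hypothesis \eqref{E:a2} enters. It also explains why the summability condition carries the extra factor $(\alpha_k-2)^{-1}$.

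\textbf{Step 2 (estimates for $\Psi$).} We apply Lemma \ref{linearEst} and Lemma \ref{lemmanonlEs} term by term in $k$, exactly as in \eqref{estimcontrac1} and \eqref{differenequa1}, pulling $|d_k|(1-b\tau)^{\frac{\alpha_k}{2}-2}$ outside the $\mathcal{X}$-norm. We then integrate in $\tau$ using Step 1. This gives
\begin{equation*}
\sup_{[0,1/b]}\|\Psi(v)\|_{\mathcal{X}}\le C_1 2^{n+1}\|v_0\|_{\mathcal{X}}+\frac{C}{b}\,\widetilde{\mathcal{G}}_1(R),
\end{equation*}
\begin{equation*}
\sup_{[0,1/b]}\|\Psi(v)-\Psi(w)\|_{\mathcal{X}}\le \frac{C}{b}\,\widetilde{\mathcal{G}}_2(R)\,\sup_{[0,1/b]}\|v-w\|_{\mathcal{X}} .
\end{equation*}
Here $\widetilde{\mathcal{G}}_j$ are the functions $\mathcal{G}_j$ of \eqref{G1function} and \eqref{G2def}, with $|a_k|$ replaced by $2|d_k|/(\alpha_k-2)$. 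Finiteness of $\widetilde{\mathcal{G}}_1(R)$ and $\widetilde{\mathcal{G}}_2(R)$ follows from the stated summability hypothesis. Take $R_0$ to be $\max\{2R,2/\lambda\}$ and $\min\{2R,\lambda/2\}$ to cover the positive and negative powers. The constants $C_{\beta,\alpha_k}$ and $|\frac{\alpha_k}{2}-\beta|$ are dominated by $1+C(\alpha_k,\beta)$ and $|\alpha_k-2\beta|$ from \eqref{E:C(a,b)}.

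\textbf{Step 3 (non-vanishing, choice of $b$, conclusion).} As in \eqref{eqinfcond1}, we use \eqref{eqliner2} to get
\begin{equation*}
\langle x\rangle^n|\Psi(v)(x,t)|\ge\lambda-\frac{C}{b}R-\frac{C}{b}\widetilde{\mathcal{G}}_1(R).
\end{equation*}
The hypothesis $\inf|v_0|\ge\lambda$ implies \eqref{Xinf}, since $\langle x\rangle^n\ge1$. The quantities $R$, $\widetilde{\mathcal{G}}_1(R)$ and $\widetilde{\mathcal{G}}_2(R)$ do not depend on $b$. We therefore choose $b$ large enough that $\frac{C}{b}\widetilde{\mathcal{G}}_1\le \frac R2$, $\frac{C}{b}\widetilde{\mathcal{G}}_2\le\frac12$, and $\frac{C}{b}(R+\widetilde{\mathcal{G}}_1)\le\frac\lambda2$. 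With this choice $\Psi$ is a contraction on $\mathcal{E}_{R,1/b}$, and the Banach fixed-point theorem gives a solution $v\in C([0,b^{-1}];\mathcal{X})$.

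Continuity in time holds up to the endpoint, because the Duhamel integrand is in $L^1$ in $\tau$ by Step 1. Uniqueness follows from the iteration argument of Section \ref{SectionLWPX}, with subintervals chosen so that $\int_{T_j}^{T_{j+1}}(1-b\tau)^{\frac{\alpha_k}{2}-2}d\tau$ is small uniformly in $k$. This works because the tail of the integral tends to zero, dominated by the summable bound from Step 1.

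The main obstacle is Step 2 near $t=b^{-1}$. The nonlinear estimate must hold with constants independent of $\alpha_k$, so that the singular weights can be integrated and summed in $k$; Lemma \ref{lemmanonlEs} provides exactly this uniformity. The weights with $\alpha_k<4$ are integrable only because $\alpha_k>2$, and this must be tracked carefully.
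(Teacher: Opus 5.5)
Your proposal is correct and follows the paper's proof essentially step for step. Both arguments run a contraction for the Duhamel map on $[0,b^{-1}]$ with $R=2^{n+2}C_1\|v_0\|_{\mathcal X}$, use the identity $\int_0^{b^{-1}}(1-b\tau)^{\frac{\alpha_k}{2}-2}\,d\tau=\frac{2b^{-1}}{\alpha_k-2}$ to produce $\widetilde{\mathcal G}_1,\widetilde{\mathcal G}_2$, get non-vanishing via \eqref{eqliner2}, and then take $b\gg1$. One small remark: as literally stated, $\inf_x|v_0(x)|\ge\lambda$ cannot hold for $v_0\in\mathcal X$, because $\langle x\rangle^n v_0\in L^\infty$ forces $v_0$ to decay; the intended hypothesis is \eqref{Xinf}, which is exactly what your argument uses. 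Your extra remarks on continuity up to $t=b^{-1}$ and on uniqueness, via absolute continuity of the integrable majorant $\sum_k|d_k|(\cdots)(1-b\tau)^{\frac{\alpha_k}{2}-2}$, go slightly beyond the paper's sketch and are sound.
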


\begin{proof}
The proof uses the contraction mapping principle as in Theorem \ref{LWP-X} for the integral operator associated to the Cauchy problem in \eqref{NLSsolution}
\begin{equation}\label{integralcontrac}
    \begin{aligned}
       \Phi_1(v(t))=e^{it \partial_x^2}v_0+i\int_0^t e^{i(t-\tau)\partial_x^2}\mathcal{N}_1(v(\tau))v(\tau)\, d\tau,
    \end{aligned}
\end{equation}
acting on the space 
\begin{equation*}
        \begin{aligned}
            \mathcal{E}_{R,b} = \Big\{ &v \in C([0,b^{-1}], \mathcal{X}): \\
            &\sup_{t\in [0,b^{-1}]}\|v(t)\|_{\mathcal{X}}=\sup_{t\in [0,b^{-1}]}\bigg( \|\langle x \rangle^n v(t)\|_{L^{\infty}} + \sum_{k=1}^{r} \|\langle x \rangle^{n} \partial_{x}^{k} v(t)\|_{L^2} + { \sum_{k=r+1}^{M} \| \partial_{x}^{k} v(t)\|_{L^2}}\bigg) \leq R,\\
            &\inf_{(x,t)\in \mathbb{R}\times[0,b^{-1}]}|\langle x \rangle^n v(x,t)| \geq \frac{\lambda}{2}\Big\}.
        \end{aligned}
    \end{equation*}
Following similar steps as in \eqref{estimcontrac1}, that are based on the linear and nonlinear estimates in Lemmas \ref{linearEst} and \ref{lemmanonlEs}, we obtain 
\begin{equation}\label{pseudeq1}
    \begin{aligned}
        \|\Phi_1 (v(t))\|_{\mathcal{X}}\leq & C_1\langle b^{-1} \rangle^{n+1} \|v_0\|_{\mathcal{X}}+C\sum_{k=0}^{\infty}|a_k|\langle b^{-1} \rangle^{n+1}\int_0^{b^{-1}} (1-b\tau)^{\frac{\alpha_k}{2}-2} \||v|^{\alpha_k}(\tau)v(\tau)\|_{\mathcal{X}}\, d \tau \\
       \leq & C_1\langle b^{-1} \rangle^{n+1} \|v_0\|_{\mathcal{X}}+b^{-1}\langle b^{-1} \rangle^{n+1} \widetilde{\mathcal{G}}_1(R),
    \end{aligned}
\end{equation}
where $0\leq t \leq b^{-1}$ and
\begin{equation}\label{G1functionprim}
    \begin{aligned}
       \widetilde{\mathcal{G}}_1(R):=& C \sum_{k=0}^{\infty}|a_k|\frac{R^{\alpha_k+1}}{\alpha_k-2}+C \sum_{k=0}^{\infty}\sum_{1\leq \beta\leq M}|a_k||C_{\beta,\alpha_k}| \Big(\big(\tfrac{\lambda}{2}\big)^{-|\alpha_k-2\beta|}+R^{|\alpha_k-2\beta|}\Big)\frac{R^{2\beta+1}}{\alpha_k-2},  
    \end{aligned}
\end{equation}
with $C>0$ independent of $\alpha_k$ for all $k$ and $C_{\beta,\alpha_k}$ as in \eqref{constadiff}. Note that above we used 
\begin{equation*}
    \int_0^{b^{-1}} (1-b\tau)^{\frac{\alpha_k}{2}-2}\, d\tau=\frac{2b^{-1}}{\alpha_k-2},
\end{equation*}
which requires that $\alpha_k>2$ for all $k\geq 0$ (hence, the assumption in the theorem). On the other hand, by Lemmas \ref{linearEst}, \ref{lemmanonlEs} and the arguments in \eqref{differenequa1}, we have
\begin{equation}\label{pseudeq2}
\begin{aligned}
    \|\Phi_1(u(t))&-\Phi_1(v(t))\|_{\mathcal{X}}
    \leq& b^{-1} \langle b^{-1} \rangle^{n+1}\widetilde{\mathcal{G}}_2(R)\sup_{t\in[0,b^{-1}]}\|u(t)-v(t)\|_{\mathcal{X}},
\end{aligned}    
\end{equation}
where
\begin{equation*}
\begin{aligned}
  \widetilde{\mathcal{G}}_2(R)= & C\sum_{k=0}^{\infty} |a_k| |\alpha_k|\big(\big(\tfrac{\lambda}{2}\big)^{-|\alpha_k-1|}+(2R)^{|\alpha_k-1|}\big)\frac{R}{\alpha_k-2}+C\sum_{k=0}^{\infty}|a_k|\frac{(2R)^{\alpha_k}}{\alpha_k-2}\\
    &+\sum_{k=0}^{\infty}\sum_{1\leq \beta\leq M} |a_k|C_{\beta,\alpha_k}\Big\{|\tfrac{\alpha_k}{2}-\beta|\big(\big(\tfrac{\lambda}{2}\big)^{-|\alpha_k-2\beta-1|}+(2R)^{|\alpha_k-2\beta-1|}\big)\frac{(2R)^{2\beta+1}}{\alpha_k-2}\\
    &\hspace{3.5cm} +\big(\big(\tfrac{\lambda}{2}\big)^{-|\alpha_k-2\beta|}+(2R)^{|\alpha_k-2\beta|}\big)\frac{(2R)^{2\beta}}{\alpha_k-2}\Big\}.
    \end{aligned}
\end{equation*}
Finally, similarly to \eqref{eqinfcond1}, we obtain
\begin{equation}\label{pseudeq3}
    \begin{aligned}
        |\langle x \rangle^n &\Phi_1(v)(x,t)| \geq & \lambda-Cb^{-1}\langle b^{-1} \rangle^n\|u_0\|_{\mathcal{X}}-b^{-1}\langle b^{-1} \rangle^{n+1}\widetilde{\mathcal{G}}_1(R).
    \end{aligned}
\end{equation}
Therefore, setting $R=2^{n+2}C_1\|v_0\|_{\mathcal{X}}$, we combine \eqref{pseudeq1}, \eqref{pseudeq2} and \eqref{pseudeq3}, to show that for $b \gg 1$, the mapping $\Phi:\mathcal{E}_{R,b}\rightarrow \mathcal{E}_{R,b}$ defines a contraction on the complete metric space $\mathcal{E}_{R,b}$. The desired conclusion follows from the Banach fixed-point theorem.
\end{proof}

We are now ready to prove Theorem \ref{scatres}. { Note that we impose the condition $n>\frac12$ here, since differentiating the quadratic phase (see computations in \eqref{Jestiglobal2}) introduces a weight $\la x \ra^{s_n}$, which will need to be bounded in $L^2(\mathbb R)$.}


\subsection{Proof of Theorem \ref{scatres}}

We consider the initial condition $u_0 = e^{i\frac{b|x|^2}{4}}v_0$ with $v_0 \in \mathcal{X}$ satisfying \eqref{Xinf}. By Theorem \ref{existnonautequ} choose $b>0$ sufficiently large, such that there exists a unique solution $v\in C([0,b^{-1}];\mathcal{X})$ of \eqref{NLSsolution} with the initial condition $v_0$. Define
\begin{equation}{\label{Pseudo2}}       
    \begin{aligned}
        u(x,t) = \frac{1}{(1+b t)^{\frac{1}{2}}}e^{\frac{ib x^2}{4(1+b t)}} \, v\Big(\frac{x}{1+b t}, {\frac{t}{1+ b t}}\Big)
    \end{aligned}
\end{equation}
for all $0\leq t<\infty$, $x\in \mathbb{R}$. Let $s_n=1$ if $n>\frac{3}{2}$, and fix $s_n \in (0,n-\frac{1}{2})$ if $\frac{1}{2}<n\leq \frac{3}{2}$. It follows that
\begin{equation}\label{globalclass}
    u \in C([0,\infty);H^{s_n}(\mathbb{R}))\cap L^{\infty}\big([0,\infty);L^{\infty}(\langle x \rangle^{\frac{1}{2}}\, dx)\big)
\end{equation}
and $u$ solves \eqref{NLS} with $u_0=e^{i\frac{b|x|^2}{4}}v_0$. The fact that $u$ solves \eqref{NLS} is a consequence of the fact that $v$ solves \eqref{NLSsolution}. To verify \eqref{globalclass}, we use \eqref{Pseudo2} to obtain
\begin{equation*}
    \begin{aligned}
        \|\langle x \rangle^{\frac{1}{2}}u(x,t)\|_{L^{\infty}_x}
        \leq & \frac{C}{(1+bt)^{\frac{1}{2}}}\Big(\big\|v(x,{\frac{t}{1+ b t}}) \big\|_{L^{\infty}_x}+(1+bt)^{\frac{1}{2}} \big\| |x|^{\frac{1}{2}} v(x, {\frac{t}{1+ b t}}) \big\|_{L^{\infty}_x} \Big)\\
        \leq & C\, \big\|\langle x \rangle^{\frac{1}{2}}v(x, \frac{t}{1+bt}) \big\|_{L^{\infty}_x}.
    \end{aligned}
\end{equation*}
By taking the supremum on time $t\in [0,\infty)$ in the above inequality, and using the fact that $v\in C([0,b^{-1}];\mathcal{X})$, we conclude that $u\in L^{\infty}\big([0,\infty);L^{\infty}(\langle x \rangle^{\frac{1}{2}}\, dx)\big)$. Next, we show that $u\in C([0,\infty);{  H^{s_n}(\mathbb{R})})$. For now assume that $\frac{1}{2}<n \leq \frac{3}{2}$, and hence, $0<s_n<n-\frac{1}{2}\leq 1$. The proof for $n>\frac{3}{2}$, in which case we set $s_n=1$, follows from similar reasoning as described below, by replacing the fractional derivative by the local derivative, therefore, we omit the analysis of that case. We apply Theorem \ref{TheoSteDer}, together with \eqref{prelimneq} to deduce
\begin{equation}\label{Jestiglobal}
    \begin{aligned}
\|{J^{s_n}}u(x,t)\|_{L^2_x} &
        \leq {\frac{C}{(1+bt)^{\frac{1}{2}}}\|v(\frac{x}{1+bt},\frac{t}{1+bt})\|_{L^2_x}}+\frac{C}{(1+bt)^{\frac{1}{2}}}\|\mathcal{D}^{s_n}\big(e^{\frac{ib x^2}{4(1+bt)}}\big)v(\frac{x}{1+bt},\frac{t}{1+bt})\|_{L^2_x}\\
        &\quad +\frac{C}{(1+bt)^{\frac{1}{2}}}\|D^{s_n}\Big(v(\frac{x}{1+bt},\frac{t}{1+bt})\Big)\|_{L^2_x}\\
        & \leq  \, C \, \|v(x,\frac{t}{1+bt})\|_{L^2_x}+\frac{C}{(1+bt)^{s_n}}\|(D^{s_n}v)(x,\frac{t}{1+bt})\|_{L^2_x}\\
        & \quad +\frac{C}{(1+bt)^{\frac{1}{2}}}\|\mathcal{D}^{s_n}\big(e^{\frac{ib x^2}{4(1+bt)^2}}\big)v(\frac{x}{1+bt},\frac{t}{1+bt})\|_{L^2_x}.
    \end{aligned}
\end{equation}
Applying Theorem \ref{derivexp}, we have
\begin{equation}\label{Jestiglobal2}
    \begin{aligned}
\|\mathcal{D}^{s_n}\big(e^{\frac{ib x^2}{4(1+bt)}}\big) 
v(\frac{x}{1+bt},\frac{t}{1+bt})\|_{L^2_x}
        \leq & \frac{C}{(1+bt)^{\frac{s_n}{2}-\frac{1}{2}}}\|v(x,\frac{t}{1+bt})\|_{L^2_x}+C\|\frac{|x|^{s_n}}{(1+bt)^{s_n}}v(\frac{x}{1+bt},\frac{t}{1+bt})\|_{L^2_x}\\
        \leq & \Big(\frac{C}{(1+bt)^{\frac{s_n}{2}-\frac{1}{2}}}+C(1+bt)^{\frac{1}{2}}\Big)\|\langle x \rangle^{s_n}v(x,\frac{t}{1+bt})\|_{L^2_x}\\
        \leq & \Big(\frac{C}{(1+bt)^{\frac{s_n}{2}-\frac{1}{2}}}+C(1+bt)^{\frac{1}{2}}\Big)\sup_{t\in [0,b^{-1}]}\|\langle x \rangle^{n}v(x,t)\|_{L^{\infty}_{x}}\|\langle x \rangle^{s_n-n}\|_{L^2_x}.
    \end{aligned}
\end{equation}
Thus, from \eqref{Jestiglobal}, \eqref{Jestiglobal2} and the fact that $0<s_n<n-\frac{1}{2}\leq 1$, we conclude that $u\in C([0,\infty),H^{s_n}(\mathbb{R}))$. (Continuity follows by the same argument as above, applied to the difference of $u(t)$ at two different times). This completes the proof of Claim \eqref{globalclass}.

Next, we find the scattering profile. Recall the following identity due to Cazenave \& Weissler \cite{CazenaveThierry1991}
$$
e^{-it\partial_x^2}u(x,t) = e^{\frac{ibx^2}{4}}e^{-\frac{it}{1+bt}\partial_x^2}v(x, {\frac{t}{1+ b t}}).
$$
Let $u_{+}$ be defined as
\begin{equation}{\label{vplus}}
    u_{+}= e^{\frac{ibx^2}{4}}e^{-i\frac{1}{b}\partial_x^2}v(x,\frac{1}{b}).
\end{equation}
We claim that
\begin{equation}\label{claimscat}
e^{-it\partial_x^2}u(t)\xrightarrow[t \to \infty]{} u_{+} \, \, \text{ in } \, \,  H^{s_n}(\mathbb{R}).
\end{equation}
Indeed, since $v$ solves the integral equation associated to \eqref{NLSsolution} (see \eqref{integralcontrac}), we obtain   
\begin{equation*}
    e^{-it\partial_x^2}u(t) -u_+ = -ie^{i\frac{bx^2}{4}}\int_{\frac{t}{1+bt}}^{\frac{1}{b}}e^{-i\tau\partial_x^2}\mathcal{N}_1(v(\tau))v(\tau)\, d\tau,
\end{equation*}
where $\mathcal{N}_1(v)$ is given by \eqref{nonlineaNLS2}. By Theorems \ref{TheoSteDer}, \ref{derivexp}, and the inequality \eqref{prelimneq}, we get
\begin{equation}\label{scatterring1}
    \begin{aligned}
\|e^{-it\partial_x^{2}} u(t)-u_{+}\|_{H^{s_n}} 
        \leq & C \int_{\frac{t}{1+bt}}^{\frac{1}{b}}\|\mathcal{N}_1(v(\tau))v(\tau)\|_{H^{s_n}}\, d\tau+C\|\mathcal{D}^{s_n}\big(e^{i\frac{bx^2}{4}}\big)\int_{\frac{t}{1+bt}}^{\frac{1}{b}}e^{-i\tau\partial_x^2}\mathcal{N}_1(v(\tau))v(\tau)\, d\tau\|_{L^2}\\
        \leq & C \int_{\frac{t}{1+bt}}^{\frac{1}{b}}\|\mathcal{N}_1(v(\tau))v(\tau)\|_{H^{s_n}}\, d\tau+C\|\langle x \rangle^{s_n}\int_{\frac{t}{1+bt}}^{\frac{1}{b}}e^{-i\tau\partial_x^2}\mathcal{N}_1(v(\tau))v(\tau)\, d\tau\|_{L^2}.
    \end{aligned}
\end{equation}
We estimate each term on the right-hand side of the inequality above. Since $\mathcal{X}\hookrightarrow H^{s_n}(\mathbb{R})$, by Lemmas \ref{linearEst}, \ref{lemmanonlEs} and similar to \eqref{pseudeq1}, we have
\begin{equation}\label{scatterring2}
    \begin{aligned}
     \int_{\frac{t}{1+bt}}^{\frac{1}{b}}\|\mathcal{N}_1(v(\tau))v(\tau)\|_{H^{s_n}}\, d\tau \leq C b^{-1}\langle b^{-1} \rangle^{n+1}\widetilde{\mathcal{G}}_3(t,R),
    \end{aligned}
\end{equation}
where
\begin{equation}\label{scatterring3}
    \begin{aligned}
       \widetilde{\mathcal{G}}_3(t,R):=& C \sum_{k=0}^{\infty}|a_k|\frac{R^{\alpha_k+1}}{\alpha_k-2}\bigg(\frac{1}{1+bt} \bigg)^{\frac{\alpha_k}{2}-1}\\
       &+C \sum_{k=0}^{\infty}\sum_{1\leq \beta\leq M}|a_k||C_{\beta,\alpha_k}| \Big(\big(\tfrac{\lambda}{2}\big)^{-|\alpha_k-2\beta|}+R^{|\alpha_k-2\beta|}\Big)\frac{R^{2\beta+1}}{\alpha_k-2}\bigg(\frac{1}{1+bt} \bigg)^{\frac{\alpha_k}{2}-1}. 
    \end{aligned}
\end{equation}
Since $\big(\frac{1}{1+bt} \big)^{\frac{\alpha_k}{2}-1}\leq 1$, we have $\widetilde{\mathcal{G}}_3(t,R)\leq \widetilde{\mathcal{G}}_1(R)$, recalling the definition of $\widetilde{\mathcal{G}}_1$ in \eqref{G1functionprim}. Thus, we can apply Weierstrass M-Test to deduce that $\mathcal{G}_3(t,R) \to 0$ as $t\to \infty$. Similarly, applying $\mathcal{X} \hookrightarrow  L^{\infty}(\langle x \rangle^n \, dx)$, Lemmas \ref{linearEst}, \ref{lemmanonlEs} and the arguments in \eqref{pseudeq1}, we get
\begin{equation}\label{scatterring4}
    \begin{aligned}
        \|\langle x \rangle^{s_n}\int_{\frac{t}{1+bt}}^{\frac{1}{b}}e^{-i\tau\partial_x^2}\mathcal{N}_1(v(\tau))v(\tau)\, d\tau\|_{L^2} \leq & C\|\langle x \rangle^{-n+s_n}\|_{L^2}\int_{\frac{t}{1+bt}}^{\frac{1}{b}}\|\langle x \rangle^n e^{-i\tau\partial_x^2}\mathcal{N}_1(v(\tau))v(\tau)\|_{L^{\infty}}\,  d\tau \\
        \leq & C b^{-1}\langle b^{-1} \rangle^{n+1}\widetilde{\mathcal{G}}_3(t,R )\to 0 \quad \mbox{as} ~~ t\to \infty.
    \end{aligned}
\end{equation}
Plugging \eqref{scatterring2}, \eqref{scatterring4} into \eqref{scatterring1}, we conclude the scattering result \eqref{claimscat}. Since $\sup \limits_{0<t<|b|^{-1}} \|v(t)\|_{L^{\infty}}<\infty$, from \eqref{Pseudo2} it follows that $\sup \limits_{t>0} \, (1+t)^{\frac{1}{2}}\|u(t)\|_{L^{\infty}}<\infty$. Moreover, the uniqueness of solutions in the class \eqref{globalclass} follows from the energy estimates similar to those as in the proof of Theorem \ref{LWP-X}, noticing that this proof is simpler due to $\alpha_k>2$, which finishes the proof of Theorem \ref{scatres}.

\begin{remark}
The arguments in the proof of Theorem \ref{scatres} also establish the following limit
\begin{equation}
\lim_{\substack{t\to \infty \\ t>0}} \|\langle x \rangle^{s_n} \big(e^{-it\partial_x^2}u(t)-u_{+}\big)\|_{L^2}=0.
\end{equation}
\end{remark}


\section{Well-posedness and scattering in $H^1$}\label{WPHSCH1}

This part focuses on proving Theorem \ref{LocalwellpossH1}, in which we establish local well-posedness in $H^1$ and global existence and scattering for initial data of the form $e^{i\frac{b|x|^2}{4}}v_0$. The results presented here depend strongly on the fact that the sequence $\{\alpha_k\}$ in the nonlinearity \eqref{nonlinearterm} satisfies $\alpha_k\geq 1$ for all $k\geq 0$. This condition allows us to work directly in $H^1(\mathbb{R})$, without using the weighted spaces.

The proof of Theorem \ref{LocalwellpossH1} follows closely the ideas of Theorems \ref{LWP-X} and \ref{scatres}. However, in this section, we precisely detail the differences and main arguments needed to obtain Theorem \ref{LocalwellpossH1}. We divide our results into two parts: the first one concerns local existence of solutions, and the second one deals with global existence and scattering.


\subsection{Local existence results} We start with the following nonlinear estimates.
\begin{lemma}\label{H1nonlinear}
Let $u, v\in H^1(\mathbb{R})$ and $\alpha\geq 1$. Then
\begin{equation}\label{eqh1well}
  \||u|^{\alpha}u\|_{H^1} \leq c_1^{\alpha}|\alpha+2| \,\|u\|_{H^1}^{\alpha+1},
\end{equation}
and
\begin{equation}\label{eqh1well0}
\||u|^{\alpha}u - |v|^{\alpha}v\|_{H^1} 
  \leq  c_1^{\alpha}\Big(\Big|\frac{\alpha}{2}+1\Big||\alpha|+|\alpha+1|+\Big|\frac{\alpha}{2}\Big|(|\alpha-2|+2)\Big)\big(\|u\|_{H^1}+\|v\|_{H^1}\big)^{\alpha}\|u-v\|_{H^1},  
\end{equation}
where $c_1>0$ is a fixed constant such that $\|f\|_{L^{\infty}}\leq c_1\|f\|_{H^1}$ for all $f\in H^1(\mathbb{R})$.
\end{lemma}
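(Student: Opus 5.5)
The plan is to estimate the $L^2$ part and the derivative part of the $H^1$ norm separately, using only the embedding $\|f\|_{L^\infty}\le c_1\|f\|_{H^1}$ and the pointwise formula for $\partial_x(|u|^\alpha u)$. For $u\in H^1(\mathbb{R})$ and $\alpha\ge 1$ the map $z\mapsto |z|^\alpha z$ is $C^1$ on $\mathbb{C}$, and
\begin{equation*}
\partial_x(|u|^{\alpha}u)=\Big(\frac{\alpha}{2}+1\Big)|u|^{\alpha}\partial_x u+\frac{\alpha}{2}|u|^{\alpha-2}u^{2}\,\overline{\partial_x u}.
\end{equation*}
This holds a.e., either by approximating $u$ with smooth functions or by the chain rule for $W^{1,1}_{loc}$ compositions with $C^1$ maps. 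The point of the assumption $\alpha\geq 1$ is that no negative power of $|u|$ appears: the coefficients are bounded by $|u|^\alpha$, so no non-vanishing condition like \eqref{Xinf} is needed.

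For \eqref{eqh1well}, I would bound $\||u|^\alpha u\|_{L^2}\le \|u\|_{L^\infty}^\alpha\|u\|_{L^2}\le c_1^\alpha\|u\|_{H^1}^{\alpha+1}$. For the derivative, the formula gives $|\partial_x(|u|^\alpha u)|\le(\alpha+1)|u|^\alpha|\partial_xu|$, hence $\|\partial_x(|u|^\alpha u)\|_{L^2}\le(\alpha+1)c_1^\alpha\|u\|_{H^1}^{\alpha+1}$. Adding the two pieces gives the constant $(\alpha+2)c_1^\alpha$.

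For \eqref{eqh1well0}, the $L^2$ part uses the mean value inequality $\big||u|^\alpha u-|v|^\alpha v\big|\le(\alpha+1)\max(|u|,|v|)^\alpha|u-v|$. This gives the bound $|\alpha+1|c_1^\alpha(\|u\|_{H^1}+\|v\|_{H^1})^\alpha\|u-v\|_{H^1}$. For the derivative I would split, as in the decomposition \eqref{identdiff}, into two groups.

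The first group is
\begin{equation*}
\Big(\tfrac{\alpha}{2}+1\Big)\Big[(|u|^\alpha-|v|^\alpha)\partial_xu+|v|^\alpha\partial_x(u-v)\Big].
\end{equation*}
The second group is
\begin{equation*}
\tfrac{\alpha}{2}\Big[(|u|^{\alpha-2}u^2-|v|^{\alpha-2}v^2)\overline{\partial_xu}+|v|^{\alpha-2}v^2\,\overline{\partial_x(u-v)}\Big].
\end{equation*}
Throughout, $L^\infty$ goes on the undifferentiated factors and $L^2$ on $\partial_xu$ or $\partial_x(u-v)$.

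The main obstacle is the pointwise Lipschitz bounds on these coefficients with the right constants, since $|z|^{\alpha-2}z^2$ is only $C^{0,1}$-type near $0$ when $1\le\alpha<2$. For $|u|^\alpha-|v|^\alpha$ I would use \eqref{estim2}-style reasoning with $\alpha\ge1$: $\big||u|^\alpha-|v|^\alpha\big|\le\alpha\max(|u|,|v|)^{\alpha-1}|u-v|$, which produces the term $|\frac{\alpha}{2}+1||\alpha|$. For $G(z)=|z|^{\alpha-2}z^2$, I would write $G(z)=|z|^\alpha\omega^2$ with $\omega=z/|z|$ and bound $|G(z)-G(w)|$ by the real derivative of $G$ along the segment. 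That derivative is bounded by $(|\alpha-2|+2)|z|^{\alpha-1}$, coming from $\partial(|z|^{\alpha-2})$ and $\partial(z^2)$. This bound holds for $z\neq0$, and $G$ is continuous at $0$ because $\alpha\ge1$, so segments through the origin are handled by continuity; alternatively one treats $|z|\le|w|$ cases directly. The result is $|G(u)-G(v)|\le(|\alpha-2|+2)\max(|u|,|v|)^{\alpha-1}|u-v|$, giving the term $|\frac{\alpha}{2}|(|\alpha-2|+2)$.

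Collecting the estimates with $\|u\|_{L^\infty},\|v\|_{L^\infty}\le c_1(\|u\|_{H^1}+\|v\|_{H^1})$ yields \eqref{eqh1well0}. The terms $|v|^\alpha\partial_x(u-v)$ can be absorbed into the stated constants, since those constants exceed the $(\frac\alpha2+1)+\frac\alpha2=\alpha+1$ needed for them. Because the stated constant must dominate the sum of all contributions, I would recheck that the grouping matches, reallocating the $|\alpha+1|$ term as needed.
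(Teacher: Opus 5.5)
Your route is the paper's: the same formula for $\partial_x(|u|^\alpha u)$, the same four-term split of $\partial_x(|u|^\alpha u-|v|^\alpha v)$ (the paper's $\mathcal I_1,\dots,\mathcal I_4$), and the same Lipschitz constants. The one variation is for $G(z)=|z|^{\alpha-2}z^2$. You integrate the real derivative of $G$ along the segment from $v$ to $u$. The paper instead splits pointwise according to whether $|u|\le|v|$ and writes $(|u|^{\alpha-2}-|v|^{\alpha-2})u^2+|v|^{\alpha-2}(u^2-v^2)$. Both give the factor $|\alpha-2|+2$, and yours avoids applying the mean value theorem to the possibly negative power $|\cdot|^{\alpha-2}$.

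The gap is in the final tally, and the reason you give for it does not work. Suppose every piece is measured against the full norm $\|u-v\|_{H^1}$. Then the contributions are:
\begin{itemize}
\item $|\alpha+1|$ from the $L^2$ part;
\item $|\frac\alpha2+1||\alpha|$ from the $(|u|^\alpha-|v|^\alpha)\partial_xu$ term;
\item $|\frac\alpha2|(|\alpha-2|+2)$ from the $G(u)-G(v)$ term;
\item a second $\alpha+1$ from the two terms carrying $\partial_x(u-v)$.
\end{itemize}
The stated constant is exactly the sum of the first three, so there is nothing left into which the fourth could be ``absorbed.''

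The fix is to track which part of the norm of $u-v$ each piece uses. Let $X=\|u\|_{H^1}+\|v\|_{H^1}$.
\begin{itemize}
\item The $L^2$ part is at most $(\alpha+1)c_1^\alpha X^\alpha\|u-v\|_{L^2}$.
\item The two terms carrying $\partial_x(u-v)$ have combined modulus at most $(\alpha+1)|v|^\alpha|\partial_x(u-v)|$. They therefore contribute at most $(\alpha+1)c_1^\alpha X^\alpha\|\partial_x(u-v)\|_{L^2}$.
\item Only the remaining two terms need $\|u-v\|_{L^\infty}\le c_1\|u-v\|_{H^1}$.
\end{itemize}
If $\|f\|_{H^1}=\|f\|_{L^2}+\|\partial_xf\|_{L^2}$, the two $(\alpha+1)$-pieces add up to $(\alpha+1)c_1^\alpha X^\alpha\|u-v\|_{H^1}$. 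For the Hilbert norm, the same conclusion follows from monotonicity of the $\ell^2$ norm in nonnegative entries together with $\|(a,b+c)\|_{\ell^2}\le\|(a,b)\|_{\ell^2}+c$ for $a,b,c\ge 0$. The paper's own write-up leaves this implicit too: it records both $|\alpha+1|$ contributions against $\|u-v\|_{H^1}$ before collecting.

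A side remark on the role of $\alpha\ge1$. Estimate \eqref{eqh1well} holds for every $\alpha>0$, since $\big||u|^{\alpha-2}u^2\big|=|u|^\alpha$. Likewise $G$ is continuous at $0$ for every $\alpha>0$. The hypothesis $\alpha\ge 1$ is used only through the factors $\max(|u|,|v|)^{\alpha-1}$ in the difference estimate, i.e.\ boundedness of $DG$ near the origin.
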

\begin{proof}
The deduction of Lemma \ref{H1nonlinear} follows similar arguments as in the estimates for the $H^M$-norm in Lemma \ref{lemmanonlEs}.  However, to show the dependency of the constants, we describe the proofs of \eqref{eqh1well}  and \eqref{eqh1well0}. Writing $|u|^{\alpha}=(u\overline{u})^{\frac{\alpha}{2}}$, we have 
\begin{equation}\label{eqh1well1}
\begin{aligned}
    \partial_{x}(|u|^{\alpha} u)=\big(\frac{\alpha}{2}+1\big)|u|^{\alpha} \partial_{x}u +\frac{\alpha}{2}|u|^{\alpha-2}u^2\overline{\partial_{x} u}.
\end{aligned}   
\end{equation}
Applying the Sobolev embedding $L^{\infty}(\mathbb{R})\hookrightarrow H^{1}(\mathbb{R})$, we deduce
\begin{equation*}
\begin{aligned}
    \||u|^{\alpha} u\|_{H^1}\leq \big|\alpha+2\big|\|u\|_{L^{\infty}}^{\alpha} \|u\|_{H^1}\leq  c_1^{\alpha}\big|\alpha+2\big| \|u\|_{H^1}^{\alpha+1}.
\end{aligned}   
\end{equation*}
Next, we compute the $H^1$-norm of the difference $|u|^{\alpha} u-|v|^{\alpha} v$. We begin with the $L^2$-norm, first writing it as
\begin{equation*}
\begin{aligned}
    |u|^{\alpha} u-|v|^{\alpha} v=&\big(|u|^{\alpha}-|v|^{\alpha}\big) u+|v|^{\alpha}(u-v).
\end{aligned}   
\end{equation*}
By using the mean value inequality and the fact that $\alpha \geq 1$, we get
\begin{equation}\label{eqh1well2}
\begin{aligned}
    \big||u|^{\alpha}-|v|^{\alpha}\big|\leq |\alpha|\big(|u|+|v|\big)^{\alpha-1}|u-v|,
\end{aligned}   
\end{equation}
thus, an application of Sobolev embedding yields
\begin{equation}\label{eqh1well3}
\begin{aligned}
   \||u|^{\alpha} u-|v|^{\alpha} v\|_{L^2}\leq & |\alpha|\big(\|u\|_{L^{\infty}}+\|v\|_{L^{\infty}}\big)^{\alpha-1}\|u\|_{L^{\infty}}\|u-v\|_{L^2}+\|v\|_{L^{\infty}}^{\alpha}\|u-v\|_{L^2} \\
\leq&  c_1^{\alpha}\big|\alpha+1\big|\big(\|u\|_{H^1}+\|v\|_{H^1}\big)^{\alpha}\|u-v\|_{H^1},
\end{aligned}   
\end{equation}
where $c_1>0$ is defined in the statement of Lemma \ref{H1nonlinear}. Now, to estimate the $L^2$-norm of $\partial_{x}(|u|^{\alpha} u)-\partial_{x}(|v|^{\alpha} v)$, we use \eqref{eqh1well1} to write
\begin{equation*}
\begin{aligned}
\partial_{x}(|u|^{\alpha} u)-\partial_{x}(|v|^{\alpha} v)
    = &\big(\frac{\alpha}{2}+1\big)\big(|u|^{\alpha}-|v|^{\alpha}\big) \partial_{x}u+\big(\frac{\alpha}{2}+1\big)|v|^{\alpha}(\partial_{x}u-\partial_{x}v)\\ 
    & +\frac{\alpha}{2}\big(|u|^{\alpha-2}u^2-|v|^{\alpha-2}v^2\big)\overline{\partial_{x} u}+\frac{\alpha}{2}|v|^{\alpha-2}v^2(\overline{\partial_{x} u}-\overline{\partial_{x} v})\\
    =:& \mathcal{I}_1+\mathcal{I}_2+\mathcal{I}_3+\mathcal{I}_4.
\end{aligned}   
\end{equation*}
Using \eqref{eqh1well2}, the ideas in \eqref{eqh1well3}, and the Sobolev embedding, we deduce
\begin{equation*}
\begin{aligned}
\|\mathcal{I}_1\|_{L^2}+\|\mathcal{I}_2\|_{L^2}+\|\mathcal{I}_4\|_{L^2} &
    \leq \Big|\frac{\alpha}{2}+1\Big| |\alpha||\big(\|u\|_{L^{\infty}}+\|v\|_{L^{\infty}}\big)^{\alpha-1}\|u-v\|_{L^{\infty}}\|u\|_{H^{1}}+\big|\alpha+1\big|\|v\|_{L^{\infty}}^{\alpha}\|u-v\|_{H^{1}} \\
    & \leq \, c_1^{\alpha} \,\Big|\frac{\alpha}{2}+1\Big| |\alpha||\big(\|u\|_{H^1}+\|v\|_{H^1}\big)^{\alpha}\|u-v\|_{H^1}+c_1^{\alpha}\big|\alpha+1\big|\|v\|_{H^1}^{\alpha}\|u-v\|_{H^{1}}.
\end{aligned}   
\end{equation*}
On the other hand, assuming that $|u|\leq |v|$ (otherwise, just replace $u$ by $v$ in the argument below), we write
\begin{equation*}
\begin{aligned}
 \frac{\alpha}{2}\big(|u|^{\alpha-2}u^2-|v|^{\alpha-2}v^2\big)\overline{\partial_{x} u}=  \frac{\alpha}{2}\big(|u|^{\alpha-2}-|v|^{\alpha-2}\big)u^2\overline{\partial_{x} u}+\frac{\alpha}{2}|v|^{\alpha-2}(u^2-v^2)\overline{\partial_{x}u}.
\end{aligned}    
\end{equation*}
Since $\alpha \geq 1$, and  $|u|\leq |v|$, the mean value inequality yields
\begin{equation}
\begin{aligned}
\Big|\frac{\alpha}{2}\big(|u|^{\alpha-2}-|v|^{\alpha-2}\big)u^2\Big| \leq \Big|\frac{\alpha}{2}\Big||\alpha-2|\big(|u|+|v|\big)^{\alpha-1}|u-v|,  
\end{aligned}  
\end{equation}
and
\begin{equation*}
 \big|\frac{\alpha}{2} |v|^{\alpha-2}(u^2-v^2)\big|\leq  |\alpha|\big(|u|+|v|\big)^{\alpha-1}|u-v|.
\end{equation*}
Thus, using the Sobolev embedding and the inequalities above, we bound
\begin{equation*}
 \begin{aligned}
 \|\mathcal{I}_3\|_{L^2}\leq &  \Big|\frac{\alpha}{2}\Big|\big(|\alpha-2|+2\big)\big(\|u\|_{L^{\infty}}+\|v\|_{L^{\infty}}\big)^{\alpha-1}\|u-v\|_{L^{\infty}}\|u\|_{H^1}\\
 \leq &  c_1^{\alpha}\Big|\frac{\alpha}{2}\Big|\big(|\alpha-2|+2\big)\big(\|u\|_{H^1}+\|v\|_{H^1}\big)^{\alpha}\|u-v\|_{H^1}.
 \end{aligned}   
\end{equation*}
Collecting the estimates for $\mathcal{I}_j$, $j=1,2,3,4$, we complete the deduction of \eqref{eqh1well0}.

\end{proof}

We are now in a position to prove the first part of Theorem \ref{LocalwellpossH1}.

\begin{proof}[Proof of Theorem \ref{LocalwellpossH1} part (i)]
We start with the existence of solutions. For that we consider the function $\Psi$ defined by \eqref{Phi}, but here with $u_0\in H^1(\mathbb{R})$ fixed, and $u\in C([-T,T];H^1(\mathbb{R}))$. Using that $\{e^{it \partial_x^2}\}$ is a group of isometries in $H^1(\mathbb{R})$, and \eqref{eqh1well}, we deduce
 \begin{equation}\label{eqh1well4}
 \begin{aligned}
  \|\Psi(u)(t)\|_{H^1}\leq & \|u_0\|_{H^1}+\int_0^{|t|}\|\mathcal{N}(u(\tau))u(\tau)\|_{H^1}\, d\tau\\
\leq & \|u_0\|_{H^1}+\sum_{k=0}^{\infty}|d_k|\int_0^{|t|}\||u(\tau)|^{\alpha_k}u(\tau)\|_{H^1}\, d\tau  \\
\leq & \|u_0\|_{H^1}+T\sum_{k=0}^{\infty}|d_k|c_{1}^{\alpha_k}|\alpha_k+2|\Big(\sup_{t\in [-T,T]}\|u(t)\|_{H^1}\Big)^{\alpha_k+1},
 \end{aligned}    
 \end{equation}
and by the estimate \eqref{eqh1well0}, we find
\begin{equation}\label{eqh1well5}
 \begin{aligned}
  \|\Psi(u)(t)-\Psi(v)(t)\|_{H^1}\leq & \int_0^{|t|}\|\mathcal{N}(u(\tau))u(\tau)-\mathcal{N}(v(\tau))v(\tau)\|_{H^1}\, d\tau\\
\leq & \sum_{k=0}^{\infty}|d_k|\int_0^{|t|}\||u(\tau)|^{\alpha_k}u(\tau)-|u(\tau)|^{\alpha_k}u(\tau)\|_{H^1}\, d\tau  \\
\leq & T\sum_{k=0}^{\infty}|d_k|c_{1}^{\alpha_k}\Omega(\alpha_k)\Big(\sup_{t\in [-T,T]}(\|u(t)\|_{H^1}+\|v(t)\|_{H^1})\Big)^{\alpha_k}
\Big(\sup_{t\in [-T,T]}\|u(t)-v(t)\|_{H^1}\Big),
 \end{aligned}    
 \end{equation}
 where
 \begin{equation}\label{Omegadeff}
   \begin{aligned}
 \Omega(\alpha_k)=\Big(\Big|\frac{\alpha_k}{2}+1\Big||\alpha_k|+|\alpha_k+1|+\Big|\frac{\alpha_k}{2}\Big|(|\alpha_k-2|+2)\Big),
   \end{aligned}  
 \end{equation}
and $c_1>0$ is the constant provided by the Sobolev embedding  in Lemma \ref{H1nonlinear}. To put together the previous estimates, we define the space 
\begin{equation*}
\mathcal{B}_{T}=\big\{u\in C([-T,T];H^1(\mathbb{R})): \, \, \sup_{t\in [-T,T]}\|u(t)\|_{H^1}\leq 2\|u_0\|_{H^1}\big\},    
\end{equation*}
with the distance function $\sup_{t\in[-T,T]}\|u(t)-v(t)\|_{H^1}$. Notice that the hypothesis \eqref{coeffcond2} on the sequences $\{d_k\}$ and $\{\alpha_k\}$ imply 
\begin{equation*}
    \sum_{k=0}^{\infty}|d_k||\alpha_k+2|\big(2c_{1}\|u_0\|_{H^1}\big)^{\alpha_k}(2\|u_0\|_{H^1})<\infty
\end{equation*}
and
\begin{equation*}
\sum_{k=0}^{\infty}|d_k|\Omega(\alpha_k)\big(2c_1\|u_0\|_{H^1}\big)^{\alpha_k}<\infty.
\end{equation*}
Consequently, using \eqref{eqh1well4} and \eqref{eqh1well5}, one can find a time $T>0$ such that $\Psi$ defines a contraction on the complete space $\mathcal{B}_T$. Thus, existence is now a consequence of the Banach fixed-point theorem.

The uniqueness of solutions follows by energy estimates similar to those in the proof of Theorem \ref{LWP-X}. Finally, continuous dependence follows by standard arguments based on the estimates \eqref{eqh1well4} and \eqref{eqh1well5}.
\end{proof}

\subsection{Global existence and scattering} We follow a similar strategy as in the proof of Theorem \ref{scatres}. Thus,  we first solve the initial value problem associated to the non-autonomous equation \eqref{NLSsolution} with nonlinearity $\mathcal{N}_1(v)$ given by \eqref{nonlineaNLS2}.

\begin{theorem}\label{solequationb}
Let $\{d_k\}$ be a sequence of complex numbers and $\{\alpha_k\}$ be a sequence of real numbers with $\alpha_k> 2$. Suppose that for any $R_0>0$ the sequences $\{d_k\}$ and $\{\alpha_k\}$ satisfy \eqref{coeffcond3}. Let $v_0\in H^1(\mathbb{R})\cap L^2(|x|^2\, dx)$.  Then there exist $T>0$ and a unique solution $v$ of \eqref{NLSsolution} with initial data $v_0$ such that
\begin{equation*}
  v \in C([0,|b|^{-1}]; H^1(\mathbb{R})\cap L^2(|x|^2\, dx)),  
\end{equation*}
provided that $b>0$ is sufficiently large.
    
\end{theorem}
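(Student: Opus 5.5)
We will run a contraction mapping argument for the Duhamel operator $\Phi_1$ from \eqref{integralcontrac} on the space
\begin{equation*}
\mathcal{B}_{R,b}=\Big\{v\in C([0,b^{-1}];H^1(\mathbb{R})\cap L^2(|x|^2dx)):\ \sup_{t\in[0,b^{-1}]}\big(\|v(t)\|_{H^1}+\|xv(t)\|_{L^2}\big)\leq R\Big\},
\end{equation*}
with the distance induced by the same norm. Write $\|f\|_{Z}:=\|f\|_{H^1}+\|xf\|_{L^2}$.

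\textbf{Linear estimate.} Since $e^{it\partial_x^2}$ is an isometry on $H^1$, and by \eqref{afterGronwa} with weight $1$ (equivalently Lemma \ref{derivexp2} with $b=1$), we have
\begin{equation*}
\|xe^{it\partial_x^2}f\|_{L^2}\leq \|xf\|_{L^2}+2|t|\|\partial_xf\|_{L^2}.
\end{equation*}
Hence $\|e^{it\partial_x^2}f\|_Z\leq C\langle t\rangle\|f\|_Z$. For $t\in[0,b^{-1}]$ with $b\geq1$ this constant is uniform.

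\textbf{Nonlinear estimates.} The $H^1$ parts come directly from Lemma \ref{H1nonlinear}, using $\alpha_k>2\geq1$. For the weight we use
\begin{equation*}
\|x|u|^{\alpha}u\|_{L^2}\leq\|u\|_{L^\infty}^{\alpha}\|xu\|_{L^2}\leq c_1^{\alpha}\|u\|_{H^1}^{\alpha}\|u\|_Z .
\end{equation*}
For differences, the mean value bound \eqref{eqh1well2} gives
\begin{equation*}
\|x(|u|^\alpha u-|v|^\alpha v)\|_{L^2}\leq c_1^{\alpha}(1+|\alpha|)(\|u\|_{H^1}+\|v\|_{H^1})^{\alpha}\|u-v\|_Z ,
\end{equation*}
where we place $x$ on $u$ or on $u-v$ and the remaining factors in $L^\infty$. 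No non-vanishing condition is needed, because $\alpha_k\geq1$ and the weight enters only linearly.

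\textbf{Time integration.} Inserting the factor $(1-b\tau)^{\frac{\alpha_k}{2}-2}$ from \eqref{nonlineaNLS2} and using $\int_0^{b^{-1}}(1-b\tau)^{\frac{\alpha_k}{2}-2}d\tau=\frac{2b^{-1}}{\alpha_k-2}$, which is where $\alpha_k>2$ is needed, we obtain
\begin{equation*}
\|\Phi_1(v)(t)\|_Z\leq C\|v_0\|_Z+Cb^{-1}\sum_k\frac{|d_k|}{\alpha_k-2}|\alpha_k+2|c_1^{\alpha_k}R^{\alpha_k+1},
\end{equation*}
together with the analogous Lipschitz bound involving $\Omega(\alpha_k)$ from \eqref{Omegadeff} plus $1+|\alpha_k|$. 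Since $\Omega(\alpha_k)\lesssim 1+\alpha_k+\alpha_k^2$, condition \eqref{coeffcond3} applied with $R_0=2c_1R$ (and with $R_0=c_1R$) makes both series finite. Choosing $R=2C\|v_0\|_Z$ and then $b$ large makes $\Phi_1$ a self-map and a $\tfrac12$-contraction on $\mathcal{B}_{R,b}$. Banach's fixed point theorem then gives existence; the ``$T>0$'' in the statement is just $b^{-1}$.

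\textbf{Uniqueness and continuity.} Uniqueness in the larger class $C([0,b^{-1}];H^1\cap L^2(|x|^2dx))$ follows as in Theorem \ref{LWP-X}. Take $R_1$ bounding both solutions and iterate the difference estimate on short subintervals; only the $H^1$ difference estimate is needed, since uniqueness in $H^1$ suffices. Continuity in time of the weighted norm follows from the Duhamel formula and the linear weighted bound.

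\textbf{Main obstacle.} The delicate point is handling the singular-looking coefficient $(1-b\tau)^{\frac{\alpha_k}{2}-2}$. For $2<\alpha_k<4$ it blows up at $\tau=b^{-1}$, but it remains integrable, with constant $\frac{2}{\alpha_k-2}$. We must make sure the resulting constants are summable uniformly in $k$, which is exactly what the extra factor $\frac{1}{\alpha_k-2}$ in \eqref{coeffcond3} provides.
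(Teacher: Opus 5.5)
Your proposal is correct and follows the paper's proof essentially step by step. Both use the same contraction for $\Phi_1$ on the ball $\sup_{t}(\|v(t)\|_{H^1}+\|xv(t)\|_{L^2})\le R$, Lemma \ref{H1nonlinear} for the $H^1$ parts, $\|x|v|^{\alpha_k}v\|_{L^2}\le\|v\|_{L^\infty}^{\alpha_k}\|xv\|_{L^2}$ together with the mean value bound for the weighted parts, the identity $\int_0^{b^{-1}}(1-b\tau)^{\frac{\alpha_k}{2}-2}\,d\tau=\frac{2b^{-1}}{\alpha_k-2}$ combined with \eqref{coeffcond3} for summability, and uniqueness via the energy argument of Theorem \ref{LWP-X}.
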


\begin{proof}
 We apply the contraction mapping principle to the integral operator \eqref{integralcontrac}, but with $v_0\in H^1(\mathbb{R})\cap L^2(|x|^2\, dx)$, and acting on the space
 \begin{equation*}
\begin{aligned}
     \widetilde{\mathcal{B}}_{R,T}=\big\{v\in C([0,b^{-1}]; & H^1(\mathbb{R})\cap L^2(|x|^2\, dx)): \sup_{t\in [0,b^{-1}]}\big(\|v(t)\|_{H^1}+\|x v(t)\|_{L^2}\big)\leq R
   \big\},
\end{aligned}   
 \end{equation*}
equipped with the distance function $\sup\limits_{t\in [0,b^{-1}]}\big(\|u(t)-v(t)\|_{H^1}+\|x(u(t)-v(t))\|_{L^2}\big)$, $u,v\in \widetilde{\mathcal{B}}_T$. By the arguments in \eqref{eqh1well4} and \eqref{eqh1well5}, which depend on the nonlinear estimates in Lemma \ref{H1nonlinear}, we deduce
\begin{equation}\label{globalH1eq1}
\begin{aligned}
  \|\Phi_1(v(t))\|_{H^1}\leq &\|v_0\|_{H^1}+\sum_{k=0}^{\infty}|d_k|c_1^{\alpha_k}R^{\alpha_k+1}\int_0^{b^{-1}}(1-b\tau)^{\frac{\alpha_k}{2}-2}\, d\tau\\
  \leq &  \|v_0\|_{H^1}+2b^{-1}\sum_{k=0}^{\infty}|d_k|c_1^{\alpha_k}\frac{R^{\alpha_k+1}}{\alpha_k-2},
\end{aligned}    
\end{equation}
and
\begin{equation}\label{globalH1eq2}
\begin{aligned}
\|\Phi_1(u(t))- \Phi_1(v(t))\|_{H^1}
 & \leq \bigg(\sum_{k=0}^{\infty}|d_k|c_1^{\alpha_k}\Omega(\alpha_k)(2R)^{\alpha_k}\int_0^{b^{-1}}(1-b\tau)^{\frac{\alpha_k}{2}-2}\, d\tau \bigg)\Big( \sup_{t\in [0,b^{-1}]}\|u(t)-v(t)\|_{H^1}\Big)\\
\leq &2b^{-1}\bigg(\sum_{k=0}^{\infty}|d_k|c_1^{\alpha_k}\Omega(\alpha_k)\frac{(2R)^{\alpha_k}}{\alpha_k-2}\bigg)\Big( \sup_{t\in [0,b^{-1}]}\|u(t)-v(t)\|_{H^1}\Big),
\end{aligned}    
\end{equation}
where $c_1>0$ is a fixed constant such that $\|f\|_{L^{\infty}}\leq c_1\|f\|_{H^1}$ for all $f\in H^1(\mathbb{R})$, and $\Omega(\cdot)$ is defined in \eqref{Omegadeff}. Next, we use Lemma \ref{derivexp2} to find a universal constant $C>0$ such that
\begin{equation}\label{globalH1eq3}
\begin{aligned}
\|x \Phi_1 (v(t))\|_{L^2} 
 \leq &C\,\langle b^{-1}\rangle(\|u_0\|_{H^1}+\|xu_0\|_{L^2}) \\
 &+C\langle b^{-1} \rangle \sum_{k=0}^{\infty}|d_k|\int_0^{b^{-1}}(1-b\tau)^{\frac{\alpha_k}{2}-2}\Big(\||v(\tau)|^{\alpha_k}v(\tau)\|_{H^1}+\|x\, |v(\tau)|^{\alpha_k}v(\tau)\|_{L^2}\Big)\, d\tau. 
\end{aligned}    
\end{equation}
To estimate the above expression, we use Sobolev embedding and the fact that $v\in \widetilde{\mathcal{B}}_T$ to get
\begin{equation}\label{globalH1eq4}
 \begin{aligned}
 \|x\, |v(\tau)|^{\alpha_k}v(\tau)\|_{L^2} &\leq \|v(\tau)\|^{\alpha_k}_{L^{\infty}}\|x v(\tau)\|_{L^2} \leq c_1^{\alpha_k} \sup_{t\in [0,b^{-1}]}\|v(t)\|_{H^1}^{\alpha_k}\|x v(t)\|_{L^2}\leq  c_1^{\alpha_k}R^{\alpha_k+1}.
 \end{aligned}  
\end{equation}
Then, using \eqref{globalH1eq4} and the estimate for the $H^1$-norm  in \eqref{globalH1eq1}, we find the following bound for \eqref{globalH1eq3}
\begin{equation}\label{globalH1eq5}
\begin{aligned}
 \|x \Phi_1 &(v(t))\|_{L^2} \leq &C\langle b^{-1}\rangle(\|u_0\|_{H^1}+\|xu_0\|_{L^2}) +4Cb^{-1}\langle b^{-1} \rangle \sum_{k=0}^{\infty}|d_k|c_1^{\alpha_k}\frac{R^{\alpha_k+1}}{\alpha_k-2}. 
\end{aligned}    
\end{equation}
Using again Lemma \ref{derivexp2}, we obtain
\begin{equation*}
\begin{aligned}
\|x (\Phi_1 (u(t))-\Phi_1 &(v(t)))\|_{L^2} \leq  C\langle b^{-1} \rangle \sum_{k=0}^{\infty}|d_k|\int_0^{b^{-1}}(1-b\tau)^{\frac{\alpha_k}{2}-2}\Big(\||u(\tau)|^{\alpha_k}u(\tau)-|v(\tau)|^{\alpha_k}v(\tau)\|_{H^1}\\
 &\hspace{4cm}+\|x\, \big(|u(\tau)|^{\alpha_k}u(\tau)- |v(\tau)|^{\alpha_k}v(\tau)\big)\|_{L^2}\Big)\, d\tau. 
\end{aligned}    
\end{equation*}
To estimate the previous inequality, by grouping factors, we use \eqref{eqh1well2} and the Sobolev embedding $H^1(\mathbb{R})\hookrightarrow L^{\infty}(\mathbb{R})$ to deduce
\begin{equation*}
\begin{aligned}
\|x\, \big(|u(\tau)|^{\alpha_k}u(\tau)-  |v(\tau)|^{\alpha_k} &v(\tau)\big)\|_{L^2}
 \leq  \|x\, \big(|u(\tau)|^{\alpha_k}- |v(\tau)|^{\alpha_k}\big)u(\tau)\|_{L^2}+\|x\big( |v(\tau)|^{\alpha_k}(u(\tau)-v(\tau))\big)\|_{L^2}   \\
\leq &|\alpha_k|\big(\|u(\tau)\|_{L^{\infty}}+\|v(\tau)\|_{L^{\infty}}\big)^{\alpha_k-1}\|u(\tau)-v(\tau)\|_{L^{\infty}}\|x u(\tau)\|_{L^2}\\
&+\|v(\tau)\|_{L^{\infty}}^{\alpha_k}\|x\big(u(\tau)-v(\tau)\big)\|_{L^2}\\
\leq & c^{\alpha_k}|\alpha_k|(2R)^{\alpha_k}\Big(\sup_{t\in [0,b^{-1}]}\|u(t)-v(t)\|_{H^1}\Big)+c^{\alpha_k}R^{\alpha_k}\Big(\sup_{t\in [0,b^{-1}]}\|x (u(t)-v(t))\|_{L^2}\Big).
\end{aligned}    
\end{equation*}
Consequently, we use the above estimate and the ideas in \eqref{globalH1eq2} to get
\begin{equation}\label{globalH1eq6}
\begin{aligned}
 \|x (\Phi_1 (u(t))-\Phi_1 (v(t)))\|_{L^2}  
&\leq 4Cb^{-1}\langle b^{-1} \rangle\bigg(\sum_{k=0}^{\infty}|d_k|c_1^{\alpha_k}\Omega(\alpha_k)\frac{(2R)^{\alpha_k}}{\alpha_k-2}\bigg)
\\
&\hspace{3cm}\times
\sup_{t\in [0,b^{-1}]}\Big(\|u(t)-v(t)\|_{H^1}+\|x (u(t)-v(t))\|_{L^2}\Big). 
\end{aligned}    
\end{equation}
By using hypothesis \eqref{coeffcond3}, the estimates \eqref{globalH1eq1}, \eqref{globalH1eq2}, \eqref{globalH1eq5} and \eqref{globalH1eq6}, it follows that there exist $R=R(\|u_0\|_{H^1},\|x u_0\|_{L^2})$ and $b>0$ large such that $\Phi_1(\cdot)$ is a contraction on the complete metric space $\widetilde{\mathcal{B}}_{R,T}$. This completes the existence part. Uniqueness follows by the energy estimates similar to those in the proof of Theorem \ref{LWP-X}.
\end{proof}

We can now conclude the proof of Theorem \ref{LocalwellpossH1}.

\begin{proof}[Proof of Theorem \ref{LocalwellpossH1} part (ii)]
    Let $u_0=e^{i\frac{b|x|^2}{4}}v_0$ with $v_0\in H^1(\mathbb{R})\cap L^2(|x|^2\, dx)$. By {Theorem} \ref{solequationb}, let $b>0$ be sufficiently large such that there exists a unique solution $v\in C([0,b^{-1}];H^1(\mathbb{R})\cap L^2(|x|^2\, dx))$ of \eqref{NLSsolution} with the initial condition $v_0$. We define $u(x,t)$ as in \eqref{Pseudo2}. From the fact that $v\in H^1(\mathbb{R})\cap L^2(|x|^2\, dx)$, we have
    \begin{equation*}
       u\in C([0,\infty);H^1(\mathbb{R})), 
    \end{equation*}
and $u$ solves \eqref{NLS} with initial condition $u_0=e^{i\frac{b|x|^2}{4}}v_0$. Notice that following the strategy for proving the uniqueness in Theorem \ref{LWP-X}, one can deduce the uniqueness in the class $ C([0,\infty);H^1(\mathbb{R}))$. Moreover, if $v_{+}$ is defined by \eqref{vplus}, we claim
\begin{equation}\label{claimscatH1}
e^{-it\partial_x^2}u(t)\xrightarrow[t \to \infty]{} u_{+} \, \, \text{ in } \, \,  H^{1}(\mathbb{R}).
\end{equation}
Since $v$ solves the integral equation associated to \eqref{NLSsolution}, we find  
\begin{equation*}
    e^{-it\partial_x^2}u(t) -u_+ = -ie^{i\frac{bx^2}{4}}\int_{\frac{t}{1+bt}}^{\frac{1}{b}}e^{-i\tau\partial_x^2}\mathcal{N}_1(v(\tau))v(\tau)\, d\tau,
\end{equation*}
where $\mathcal{N}_1(v)$ is given by \eqref{nonlineaNLS2}. Hence, the above representation allows us to follow similar ideas in \eqref{scatterring1}, which together with \eqref{globalH1eq1} and \eqref{globalH1eq5} yields \eqref{claimscatH1}. To avoid repetitions, we omit further details.  

\end{proof}


\section{Numerical examples}\label{SectionNum}

In this section, we provide several numerical confirmations of solutions and their behaviors to the NLS equation \eqref{NLS} that are given by Theorems \ref{LWP-X}, \ref{LocalwellpossH1}, and Corollaries \ref{corollwellposed1}, \ref{Cor1-exp}, \ref{Cor2-exp}. In some cases we extend our simulations to the cases of the NLS equation, where either the above theorems are not applicable to the specific initial data, or the well-posedness is not known at all.  

First, we examine the NLS equation \eqref{NLS} with a single nonlinear term with a small power nonlinearity $0<\alpha<1$ in \S \ref{S6.1} and show solutions behavior for the data with a slow polynomial decay. Then we investigate two combined nonlinearities in \S \ref{S6.2} by considering various cases of the nonlinearities and paying special attention to the cases where solitary waves form and whether they play a role of a threshold for the global behavior, as in the single nonlinearity case. Finally, in \S \ref{S6.3} we consider an infinite sum of combined nonlinearities, on an example of an exponential nonlinearity. 

In what follows, for the time evolution of our numerical simulations, we use IRK4 and the finite difference method, as described in \cite{RRY2022}, \cite{YRZ2018} and \cite{YRZ2019}. For the parameters, by $L$ we denote a spatial domain with the size in a range of $10\pi  \div 300\pi$ depending on the decay of the initial data; the number of nodes $N$ is typically on the order of $2^{12} \div 2^{16}$; the spatial step size is $dx = 2L/N$ and the time step size is typically $dt=0.01$ unless we specify otherwise and then we decrease it down to $0.05$ or even $0.001$ in some cases. 

\subsection{NLS with a low power single nonlinear term}\label{S6.1}

Recalling the NLS equation \eqref{NLS} with a single nonlinear term of a low power  $0 < \alpha < 1$,
\begin{equation}{\label{NLS_single}}
	\begin{cases}
		&iu_{t} + \partial_{x}^{2}u + \epsilon |u|^{\alpha}u = 0, \\
		&u(x,0) = u_{0}(x), \\
	\end{cases}
\end{equation}
we examine the time evolution of the initial data $u_0$ of slow polynomial decay that satisfy the assumptions of our Theorem \ref{LWP-X} for the local well-posedness discussed in Remark \ref{R:1}. Namely, for { $n>0$} we take  
$$
u_0(x)=\frac{A}{(1+|x|^2)^{n/2}} \equiv \frac{A}{\langle x \rangle^n},
$$
and examine the time evolution. We also consider a case of $n=\frac12$ as such data are not in $L^2$ and show a time evolution for such initial conditions in Figures \ref{Jap_1over9}, \ref{Jap0.5_1over9_1} and \ref{Jap0.5_1over9_2}, where we fix the nonlinearity power $\alpha = \frac12$ and the coefficient $\epsilon  = 0.5$ and consider examples with different initial amplitude $A$ and the spatial decay 
$$
u_0(x) \sim~ \tfrac1{|x|}, \tfrac1{|x|^{2/3}}, \quad \mbox{and} \quad \tfrac1{|x|^{1/2}} \quad \mbox{as} \quad |x| \to \infty.
$$
\begin{figure}[h!]
\begin{tabular}{m{6cm}m{6cm}m{6cm}}
		\includegraphics[width=\linewidth]{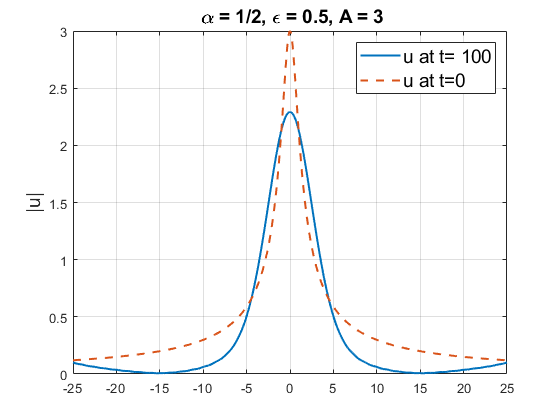}
		& \includegraphics[width=\linewidth]{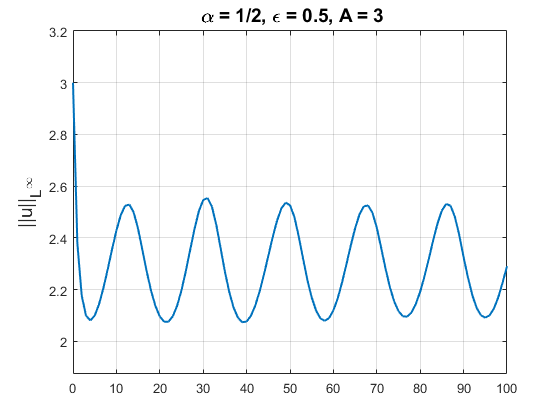}
		& \includegraphics[width=\linewidth]{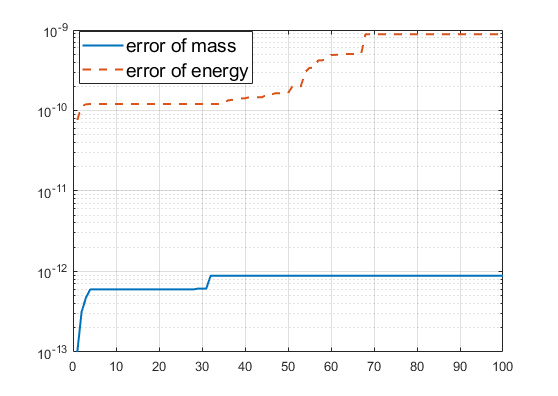}\\
		\end{tabular}
	\caption{\small {Time evolution of $u_0 = \frac{A}{({1 + x^2})^{1/2}}$ under the NLS flow \eqref{NLS_single} with parameters: $\alpha = \frac{1}{2}$, $\epsilon = 0.5$ and $A=3$.
    \label{Jap_1over9}}}
\end{figure}

In Figure \ref{Jap_1over9} we show our numerical simulations for the NLS evolution \eqref{NLS_single} with the initial condition $u_0(x) = \frac{3}{\langle x \rangle}$. 
In this simulation, the parameters are  $L = 150\pi$, $N = 2^{16}$, $dx = 2L/N \approx 0.0144$, and $dt = 0.01$. One can observe that the solution drops slightly from its initial amplitude and then oscillates  (approximately around 2.3, see left and middle plots in Fig. \ref{Jap_1over9}). This indicates that the solution oscillates around some final asymptotic state. 
We also confirm the accuracy of our numerical scheme, which preserves mass and energy close to machine precision; see the right plot in the same figure.

Taking the initial condition slower than in the first example, $u_0(x) = \frac{A}{\langle x \rangle^{2/3}}$, we show the time evolution in Figure 
\ref{Jap0.5_1over9_1} for $A=0.1$.
Since the decay is slower, we increased the computational domain and the time step, thus, the parameters are $L = 300\pi$, $N = 2^{14}$, $dx = 2L/N \approx 0.1150$, and $dt = 0.005$ in this simulation. 
One can observe that the oscillations are slower than in the first example, however, they still seem to oscillate around 0.07, see middle plot in Figure \ref{Jap0.5_1over9_1}. The error in the conserved quantities is of the machine precision as well. 
\begin{figure}[h!]
\begin{tabular}{m{6cm}m{6cm}m{6cm}}
        \includegraphics[width=\linewidth]{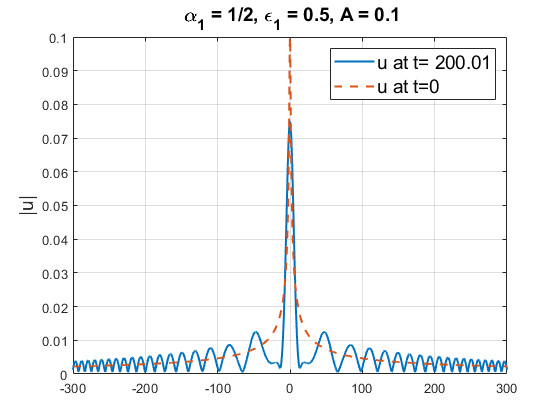}
		& \includegraphics[width=\linewidth]{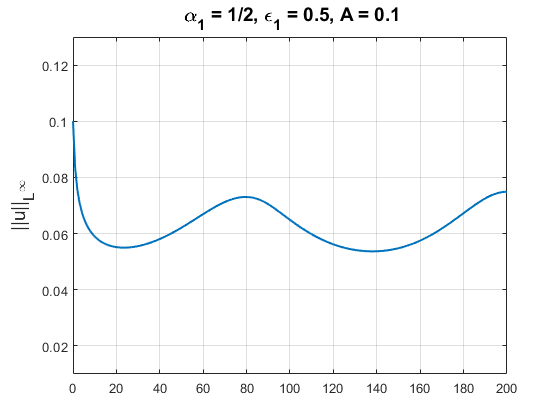}
		& \includegraphics[width=\linewidth]{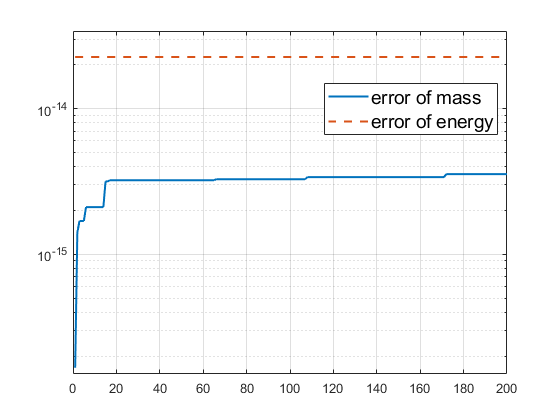}
		\end{tabular}
	\caption{\small {Time evolution of $u_0 = \frac{A}{(1 + x^2)^{1/3}}$ under the NLS flow \eqref{NLS_single} with parameters: $\alpha = \frac{1}{2}$, $\epsilon = 0.5$, and $A=0.1$.
    }}
    \label{Jap0.5_1over9_1}
\end{figure}

In the third example, we are able to consider {the case of} data with a very slow decay, which just misses being in $L^2$, namely $u_0(x) =  \frac{A}{\langle x \rangle^{1/2}}$, see Figure \ref{Jap0.5_1over9_2}. The computational parameters (length $L$, number of nodes $N$, spatial and time step sizes) are the same as in the second example; the computational errors of mass and energy are similarly of machine precision, see the right plot in this figure.  
Quantitatively, the behavior of the solution is similar to the previous examples, the oscillations of the solutions are slow as in the second example.
\begin{figure}[h!]
\begin{tabular}{m{6cm}m{6cm}m{6cm}}
        \includegraphics[width=\linewidth]{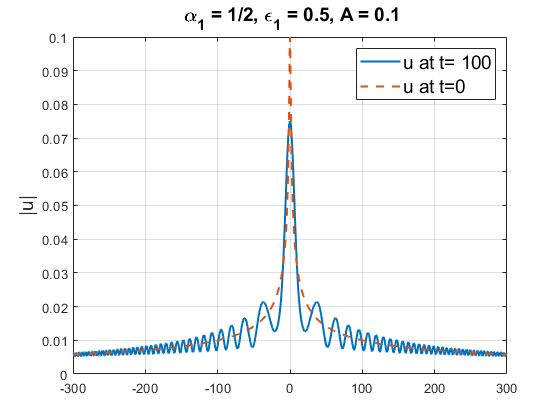}
		& \includegraphics[width=\linewidth]{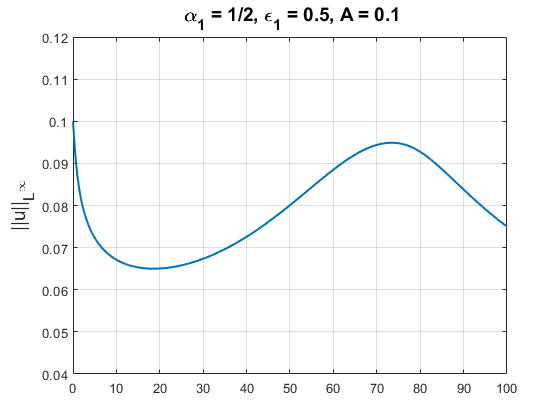}
		& \includegraphics[width=\linewidth]{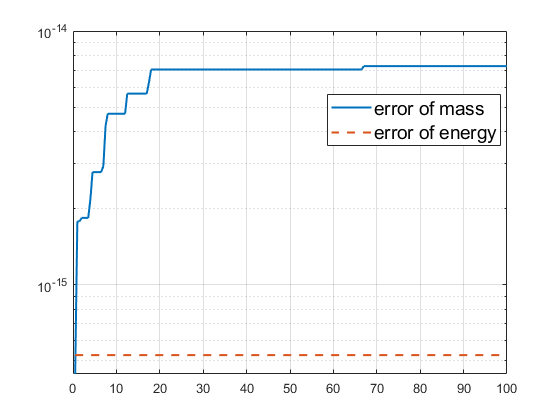}
		\end{tabular}
	\caption{\small {Time evolution of $u_0 = \frac{A}{(1 + x^2)^{1/4}}$ under the NLS flow \eqref{NLS_single} with parameters: $\alpha = \frac{1}{2}$, $\epsilon = 0.5$, and $A=0.1$. 
    }\label{Jap0.5_1over9_2}} 
    
    \end{figure}

For comparison, we also examine initial data with a faster, exponential decay that satisfy the $H^1$ assumption of Theorem \ref{LocalwellpossH1}. For that we recall the ground state solution $u(t,x) = e^{it}Q(x)$, where $Q$ is the unique positive $H^1$ solution of
$$
-Q + Q'' + \epsilon Q^{\alpha+1} = 0.
$$
As we are in the 1d setting, the ground state is given explicitly by
\begin{equation}\label{E:sech-alpha}
Q(x) = \epsilon^{-\frac{1}{\alpha}}\left(\frac{\alpha+2}{2}\right)^\frac{1}{\alpha}\mbox{sech}^\frac{2}{\alpha} \left(\frac{\alpha}{2}x\right),
\end{equation}
which is convenient for testing our numerical scheme and tracking errors. 
For these simulations, it is sufficient to take a shorter domain $L = 10\pi$, as well as the number of points $N = 2^{12}$, the spatial and temporal step sizes are $dx = 0.0153$ and $dt = 0.01$. 
Taking the initial condition $u_0=Q(x)$ for the same nonlinearity as in the above three examples $\alpha=\frac12$ and $\epsilon = 0.5$, we confirm that it is indeed a stationary solution (i.e., $|Q(x)|$ is constant in time, and the difference with the exact solution is on the order of the machine precision). We omit the figure for conciseness. We then consider perturbations of the ground state, $u_0 = AQ$, with $A >1 $ or $A<1$, and track their time evolutions, which tend to oscillate around a certain final state, see Figure \ref{Q1over2} middle plot ($L^\infty$ norm evolution in time), similar to the slow decaying solutions (discussed previously in Figures \ref{Jap_1over9}, \ref{Jap0.5_1over9_1} and \ref{Jap0.5_1over9_2}). This would be expected, since the nonlinearity is small and subcritical. The error plot on the right shows that the energy and mass are preserved as in the previous examples to almost machine precision. 
\begin{figure}[h!]
	\begin{tabular}{m{6cm}m{6cm}m{6cm}}
		\includegraphics[width=\linewidth]{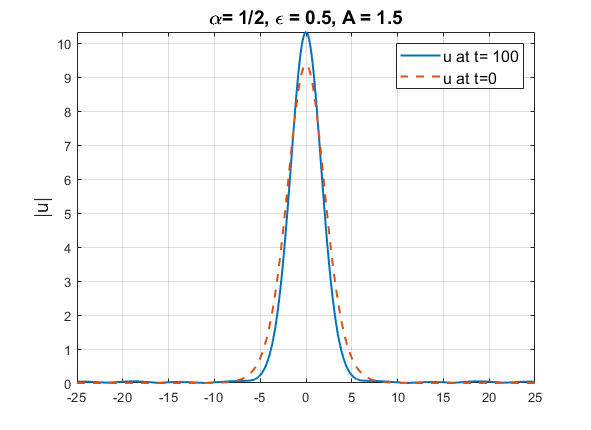}
		& \includegraphics[width=\linewidth]{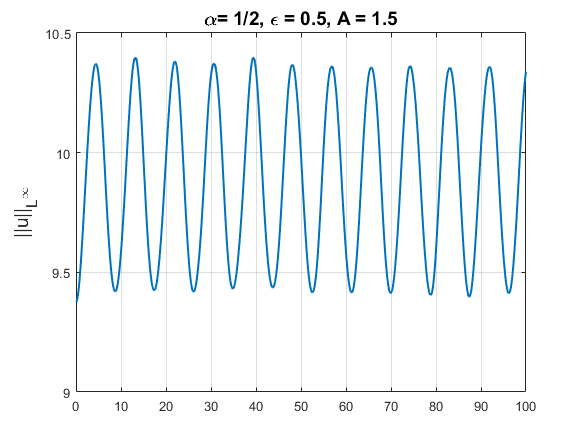}
		& \includegraphics[width=\linewidth]{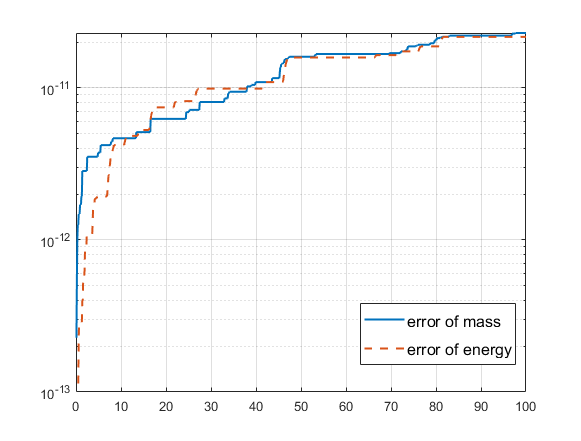}\\
	\end{tabular}
	\caption{\small {Time evolution of $u_0 = 1.5 Q$ under \eqref{NLS_single} with parameters: $\alpha = \frac{1}{2}$ and $\epsilon = 0.5$. 
    }\label{Q1over2}}
\end{figure}

We summarize that in all considered cases here (data with slow polynomial decay and data with fast exponential decay), solutions appear to oscillate around some finite state with radiation dispersing away. 

\subsection{NLS with two combined nonlinearities}\label{S6.2}
Next, we show solutions to the NLS equation with double nonlinearity, considering a range of positive powers, including small nonlinear powers (less than 1) and other positive integer powers, partially, for comparison and also for further understanding of the global behavior in this case. Namely, we show a few examples of solutions to the following cNLS equation
\begin{equation}{\label{NLS-double}}
	\begin{cases}
		&iu_{t} + \partial_{x}^{2}u + \epsilon_{1} |u|^{\alpha_1}u + \epsilon_{2} |u|^{\alpha_2}u = 0, \quad \alpha_i>0, ~~ \epsilon_i \in \mathbb R, i=1,2,\\
		&u(x,0) = u_{0}(x). \\
	\end{cases}
\end{equation} 
We emphasize that since $\alpha_1 \neq \alpha_2$, there is no scaling in this equation, so it can be challenging to establish local well-posedness in some cases of the combined nonlinearities, and even more difficult to study long term behavior of solutions. Nevertheless, Theorem \ref{corollwellposed1} guaranties the local well-posedness of slowly decaying initial data solutions to this equation (also for powers larger than 2 it provides some global results with a quadratic phase and scattering). Before we examine data with slow polynomial decay (such as $u_0 = \frac{A}{\la x \ra^n}$ with varying amplitude $A$), we discuss a test example of a known ground state solution.  

\subsubsection{Ground states for the NLS with double nonlinearity}
Substituting $u(x,t) = e^{i\omega t }Q(x)$, $\omega>0$, into \eqref{NLS-double}, and looking for a positive, smooth, vanishing at infinity solution of
\begin{equation}\label{cGS}
	-\omega Q + Q'' + \epsilon_1 Q^{\alpha_1+1} + \epsilon_2 Q^{\alpha_2+1} = 0,
    \end{equation}
one can obtain ground state solutions in this case, e.g., see \cite{Ohta1995}.
It is known that in some special cases of $\alpha_1$ and $\alpha_2$, namely, when $\alpha_2 = 2\alpha_1$, and $\epsilon_2<0$ (defocusing larger nonlinearity)
there are explicit solutions to \eqref{cGS}, see \cite{Ohta1995}, given by
\begin{align}\label{CombGStateSol}
\qquad  Q(x) = & \left(\frac{\omega}{a + \sqrt{a^2 + b\, \omega\,}\cosh{(\alpha_1\sqrt{\omega} \,x})}\right)^{{1}/{\alpha_1}}, 
\quad a = \frac{\epsilon_1}{\alpha_1 +2}, ~~b = \frac{\epsilon_2}{\alpha_1 + 1},
\end{align} 
provided 
$0< \omega < \frac{\epsilon_1^2}{|\epsilon_2|} \frac{\alpha_1+1}{(\alpha_1+2)^2}$.

We take a specific (known) case of the focusing-defocusing cubic-quintic nonlinearity (smaller focusing power $\alpha_1=2$ and larger defocusing $\alpha_2=4$) to test our numerical scheme. 
We apply Petviashvili’s iteration to obtain the ground state solution of \eqref{cGS}, for a brief overview of this method,  
refer, for instance, to \cite[Section 4]{RRY2022} or \cite{HRW2025} and references therein. 
The ground state example of this cubic-quintic nonlinearity with $\omega = 0.15$  is shown in Figure \ref{fig:GstateComb}, where we compare the exact solution $Q_{exact}$ vs. its numerical approximation $Q_{num}$. (Note that ground state solutions exist in this case for $0<\omega<3/16$; for further studies of this combined NLS see \cite{CS2021} and \cite{CKS2023}.) Parameters used here are  $L = 100\pi$, $N = 2^{16}$, and $dx = 0.0144$. Note that the difference is at least on the order of $10^{-11}$, see right plot in Figure \ref{fig:GstateComb}.
\begin{figure}[h!]
	\begin{tabular}{cc}
        \includegraphics[width=.49\hsize,height=5.8cm]{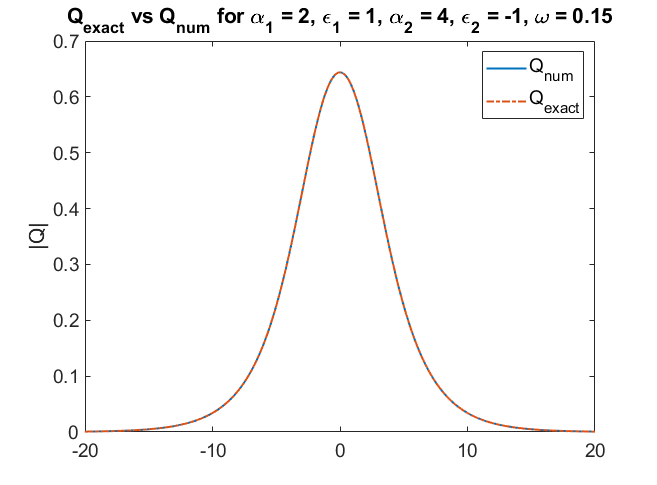} \includegraphics[width=.49\hsize,height=5.8cm]{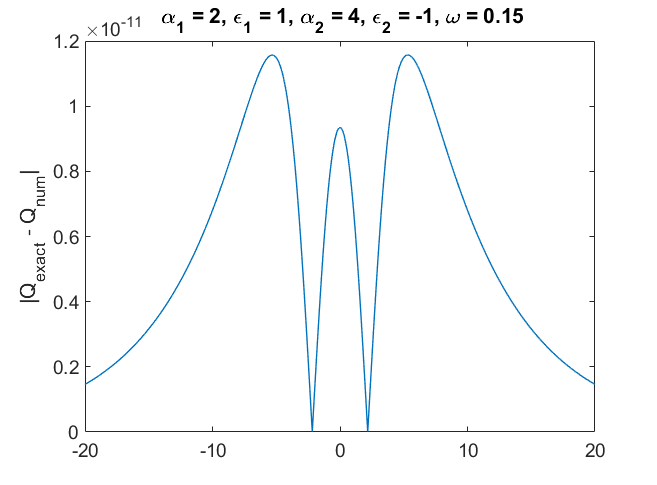}
	\end{tabular}
	\caption{\small Left: Comparison of ground states numerically calculated $Q_{num}$ vs. the explicit $Q_{exact}$ from \eqref{CombGStateSol} in the cubic-quintic case ($\alpha_1 = 2, \alpha_2 = 4$). Right: difference of $Q_{num}$ and $Q_{exact}$.
    }
	\label{fig:GstateComb}
\end{figure}

We show a couple of other examples of numerically obtained ground states for the double nonlinearity \eqref{cGS} (computational parameters here $L = 100\pi$, $N = 2^{16}$, $dx = 0.0144$): 

(i) with small powers $\mathcal N(u) = \epsilon_1 |u|^\frac19  + \epsilon_2|u|^\frac79$ in the left plot of Figure \ref{fig:GstateComb2}, fixing $\omega=0.2$ and the coefficient $\epsilon_2=1$ of the lager (focusing) nonlinearity and varying the coefficient $\epsilon_1$ of the lower power, allowing it to change from negative values to positive;

(ii) with a small power and a critical power 
$\mathcal N(u) = \epsilon_1 |u|^\frac12  + \epsilon_2|u|^4 $ in the right plot of Figure \ref{fig:GstateComb2}, fixing $\epsilon_2=0.9$ and $\omega=0.1$ and varying $\epsilon_1$ from zero down to several negative values.

(Due to the height or amplitude, we only show ground state profiles in the given cases of $\epsilon_1$, however, it is also possible to obtain for other positive and negative values, though the heights will be either very small or very large to properly fit on a plot.) 
\begin{figure}[h!]
\begin{tabular}{cc}
\includegraphics[width=.49\hsize,height=5.8cm]{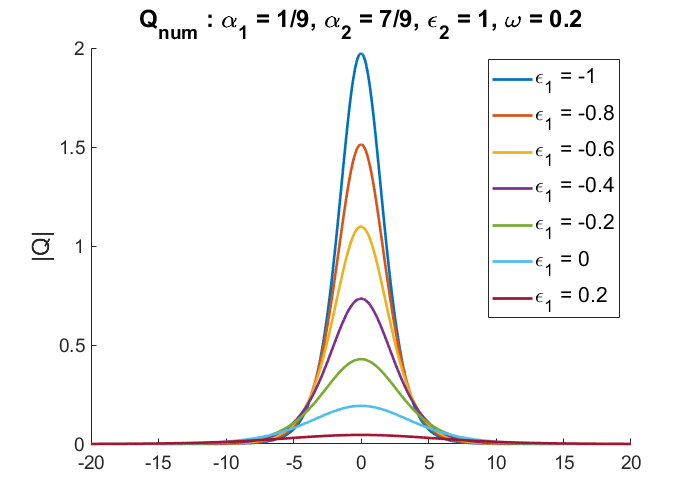}
\includegraphics[width=.49\hsize,height=5.9cm]{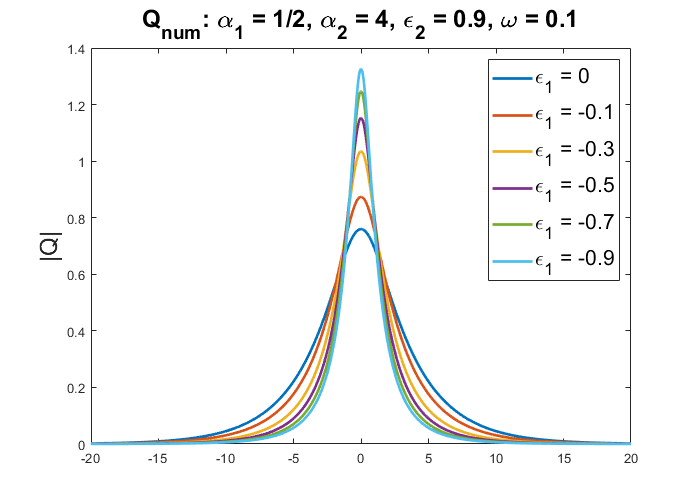}
\end{tabular}
	\caption{\small Numerically computed ground states in \eqref{cGS} compared for different strength of the smaller nonlinearity. Left: $\mathcal N(u)u = \epsilon_1 |u|^\frac19 u + |u|^\frac79 u$, $-1\leq\epsilon_1\leq 0.2$. Right: $\mathcal N(u)u = \epsilon_1 |u|^\frac12 u + 0.9 |u|^4 u$, $-0.9\leq\epsilon_1\leq 0$.}
	\label{fig:GstateComb2}
\end{figure}

\subsubsection{Time evolution of solutions to the NLS with double nonlinearity} We next show examples of the dynamics and time evolution of solutions to the combined NLS equation \eqref{NLS-double}. 
\smallskip

\underline{\bf Example 1.} Our first example is the combined NLS with the nonlinearity  
\begin{equation}\label{E:small-1}
\mathcal N(u) u = \pm |u|^{\frac19} u +  |u|^\frac79 u.
\end{equation}
We start with considering initial data with a slow decay to confirm our well-posedness results. In Figure \ref{fig:alph1_1over9_alph_2_7over9_weight}
we take initial condition
\begin{equation}\label{E:small-2}
u_0=\frac{A}{\la x\ra},
\end{equation}
and vary the amplitude $A$. 
On the left side of Figure \ref{fig:alph1_1over9_alph_2_7over9_weight} we show the time evolution of the NLS with the nonlinearity \eqref{E:small-1} taking the defocusing minus sign of the smaller power and slow polynomial decay initial data from \eqref{E:small-2}. While we have tried various $A$, here we show the snapshots of profiles and time evolution of the $L^\infty$ norm for $A=1, 0.8$.  On the right side of the same figure we consider the focusing sign of the smaller nonlinearity, namely, $\mathcal N(u) u = |u|^{\frac19} u +  |u|^\frac79 u$ with $A=0.8, 0.5$. The parameters in these simulations are $L = 150\pi$, $N = 2^{16}$, $dx = 0.0144$, $dt = 0.01$.

\begin{figure}[h!]
 \centering
 \begin{minipage}[t]{0.49\linewidth}
        \centering
        \begin{tabular}{cc}
            &\includegraphics[width=0.5\linewidth]{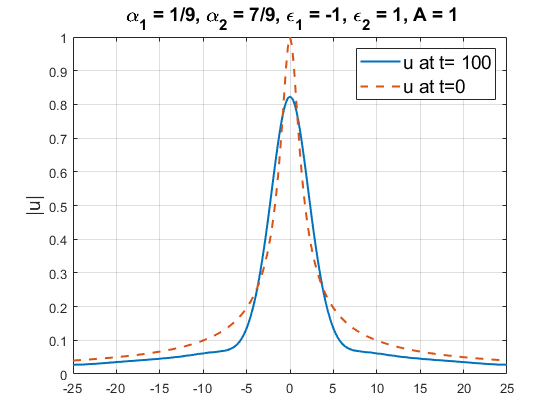}
            \includegraphics[width=0.5\linewidth]{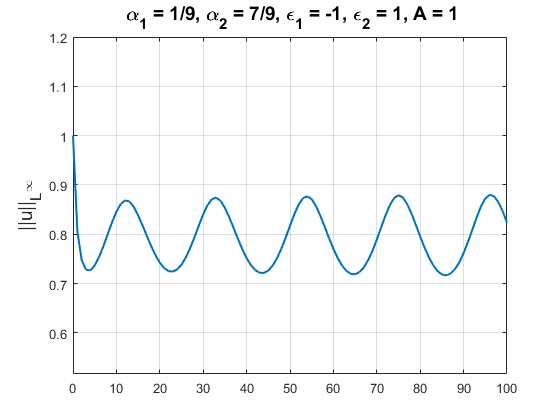}\\
            &\includegraphics[width=0.5\linewidth]{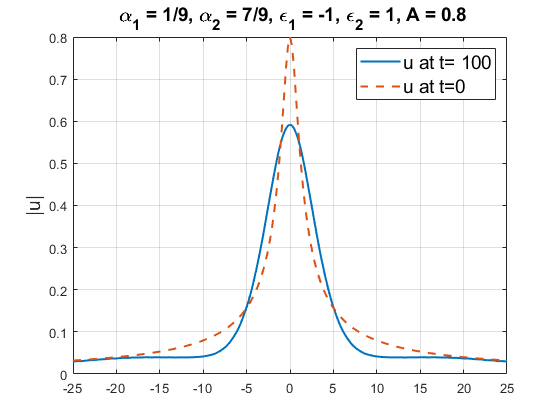}
            \includegraphics[width=0.5\linewidth]{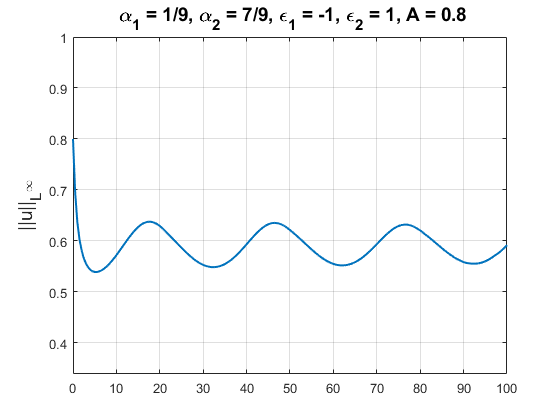}
        \end{tabular}
        \captionsetup{labelformat=empty}
        \captionof{table}{\footnotesize $\mathcal N(u)u=-|u|^{\frac19}u+|u|^{\frac79}u$}
    \end{minipage}
    \begin{minipage}[t]{0.49\linewidth}
        \centering
        \begin{tabular}{cc}
            &\includegraphics[width=0.5\linewidth]{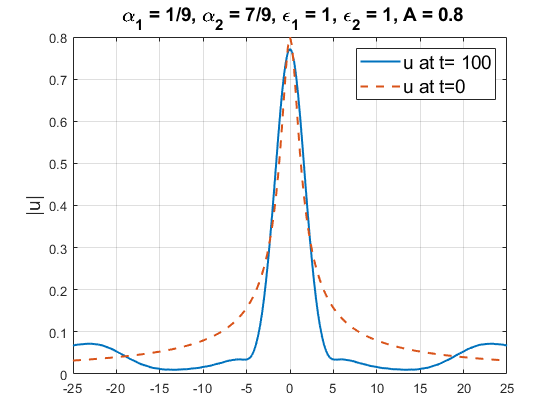}
            \includegraphics[width=0.5\linewidth]{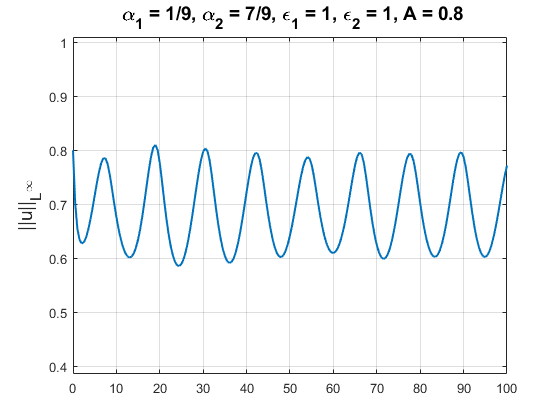}\\
            &\includegraphics[width=0.5\linewidth]{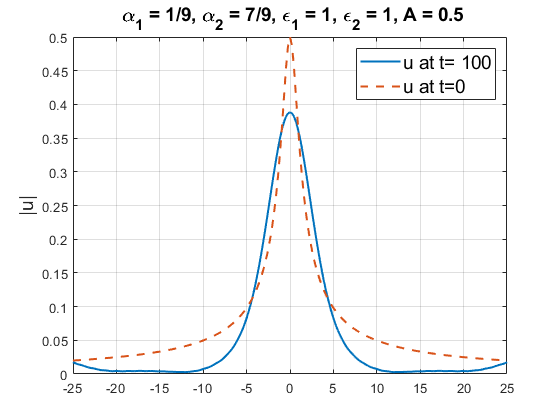}
            \includegraphics[width=0.5\linewidth]{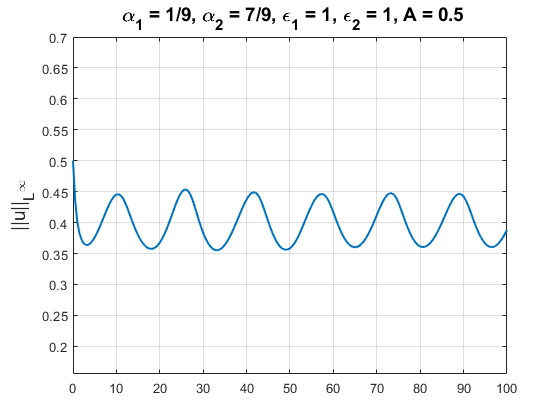}
        \end{tabular}
        \captionsetup{labelformat=empty}
        \captionof{table}{\footnotesize $\mathcal N(u)u=|u|^{\frac19}u+|u|^{\frac79}u$}
    \end{minipage}
        \captionof{figure}{\small Time evolution of the cNLS flow \eqref{NLS-double} with stated nonlinearities $\mathcal N(u)$ and the initial data $u_0 = \frac{A}{\sqrt{1 + x^2}}$ with $A$ as indicated.
        } 
    \label{fig:alph1_1over9_alph_2_7over9_weight}
\end{figure}

We note that in all cases the solutions from slow decay initial data tend to oscillate around a certain final state (dropping slightly in the amplitude) and dispersing some small radiation away (not show in Figure \ref{fig:alph1_1over9_alph_2_7over9_weight}), thus, exhibiting a stable global behavior.
\medskip

\underline{\bf Example 2.}
For a comparison of solution behavior for data with different rate of decay, we take the faster exponentially decaying ground state initial data
$$
\quad u_0 = A\, Q
$$ 
and show solutions behavior for the same double nonlinearity \eqref{E:small-1}.
In Figure \ref{fig:alph1_1over9_alph_2_7over9} the snapshots of profiles and time evolution of the initial condition $u_0 =AQ$ for the same nonlinearities as above and with $A \sim 1$ are shown. The parameters in this simulation is $L = 100\pi$, $N = 2^{14}$, $dx = 0.0383$, and $dt$ is varied between $0.01 - 0.001$. We also take a longer time of simulations (up to $t=200$) to get a better understanding of solutions behavior. 

One can observe that the global behavior is somewhat similar in nature as in the slow decaying initial data shown in Figure \ref{fig:alph1_1over9_alph_2_7over9_weight} with an oscillatory convergence to some final state for the nonlinearity with the defocusing lower power (on the left of Figure \ref{fig:alph1_1over9_alph_2_7over9}) and some oscillations in the case of both focusing nonlinearities (on the right of the same figure). In the case of the nonlinearity $\mathcal N(u) = |u|^{1/9} + |u|^{7/9}$ we point out that the ground state is extremely small in height (on the order of $10^{-7}$) and the oscillations are very slow (thus, we ran simulations for longer time to understand the behavior), however, they do seem to oscillate around some final state as in the previous case. We also plotted $A=1$ to separate values of $A>1$ and $A<1$, and confirm that the $L^\infty$ norm stays constant for the ground state solution itself. 
\begin{figure}[h!]
    \centering
    \begin{minipage}[t]{0.49\linewidth}
        \centering
        \begin{tabular}{cc}
            &\includegraphics[width=0.5\linewidth]{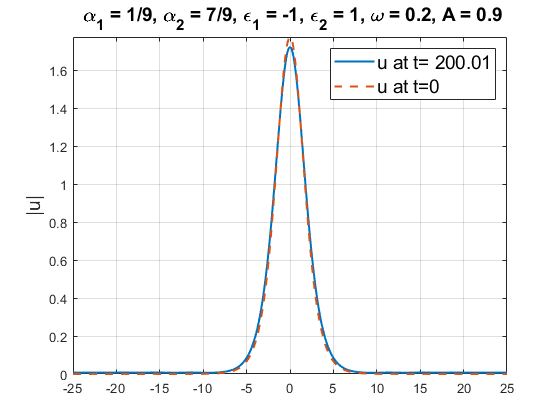}
            \includegraphics[width=0.5\linewidth]{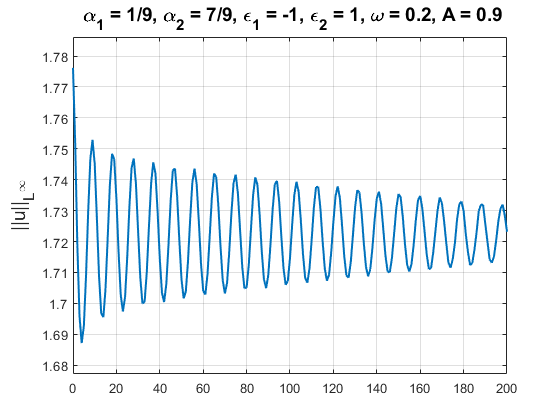}\\
            &\includegraphics[width=0.5\linewidth]{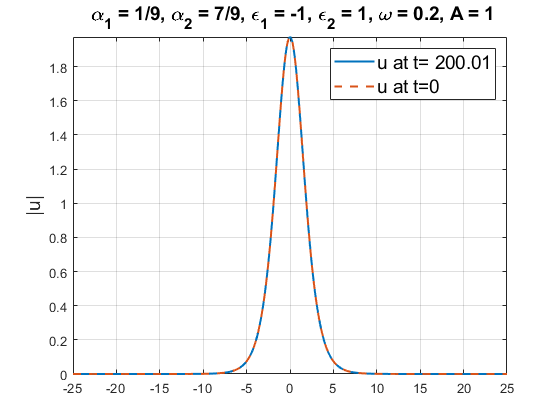}
            \includegraphics[width=0.5\linewidth]{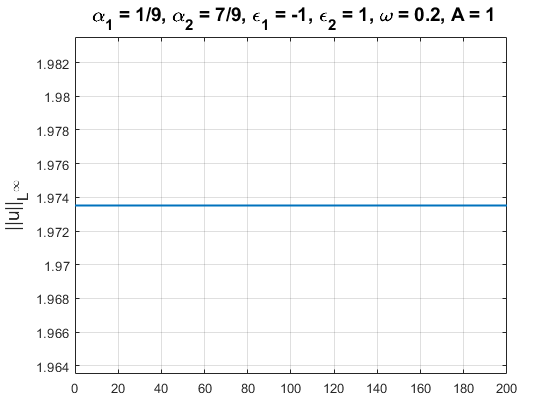}\\
            &\includegraphics[width=0.5\linewidth]{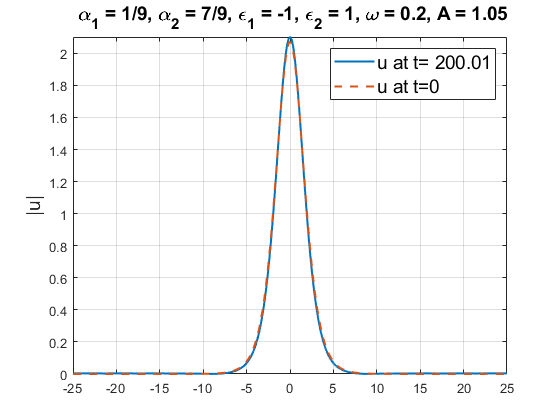}
            \includegraphics[width=0.5\linewidth]{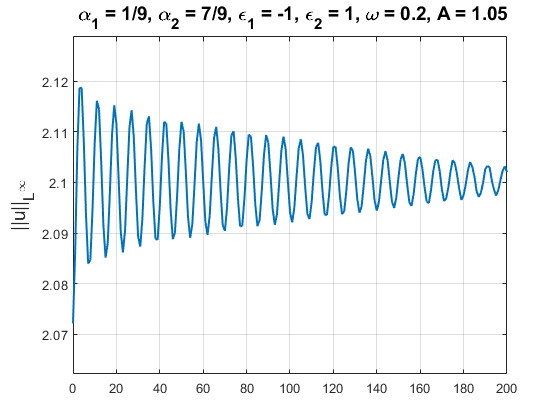}\\
        \end{tabular}
        \captionsetup{labelformat=empty}
        \captionof{table}{\footnotesize $\mathcal N(u)u=-|u|^{\frac19}u+|u|^{\frac79}u$}
    \end{minipage}
    \hspace{-0.1cm}
    \begin{minipage}[t]{0.49\linewidth}
        \centering
        \begin{tabular}{cc}
            &\includegraphics[width=0.5\linewidth]{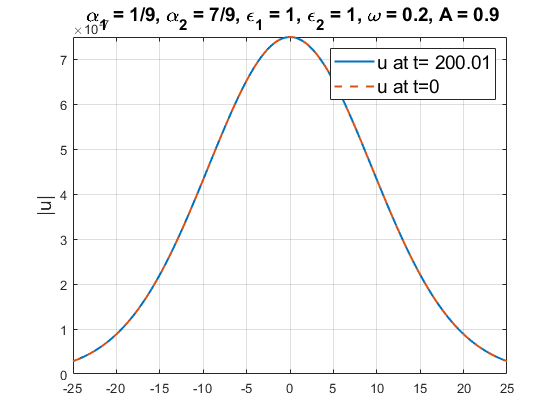}
            \includegraphics[width=0.5\linewidth]{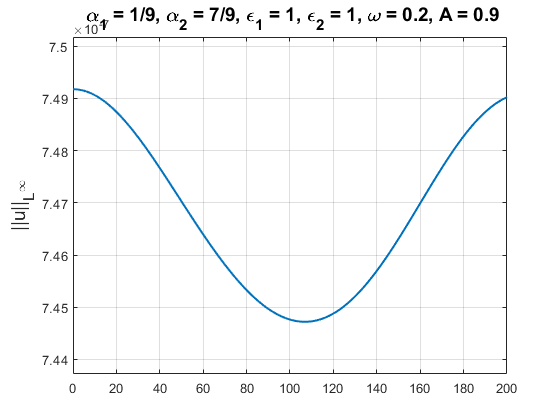}\\
            &\includegraphics[width=0.5\linewidth]{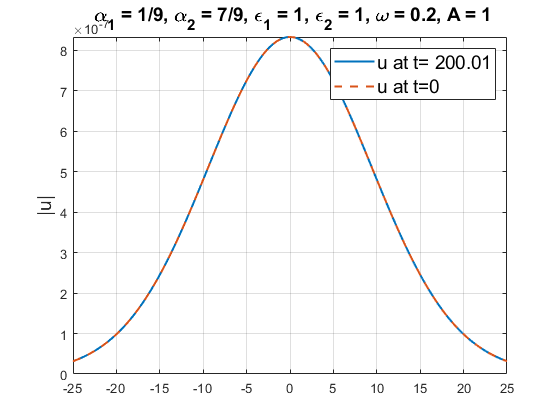}
            \includegraphics[width=0.5\linewidth]{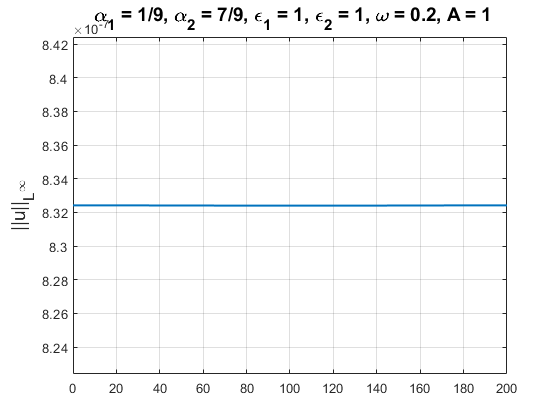}\\
            &\includegraphics[width=0.5\linewidth]{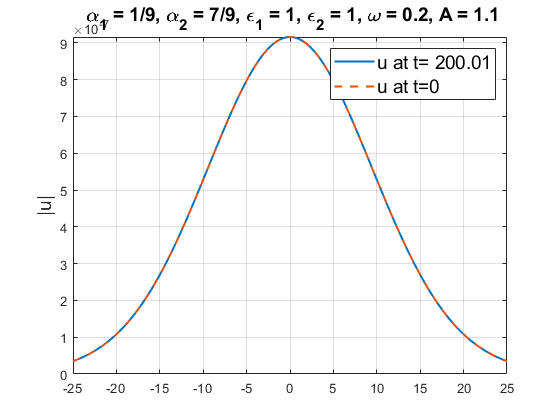}
            \includegraphics[width=0.5\linewidth]{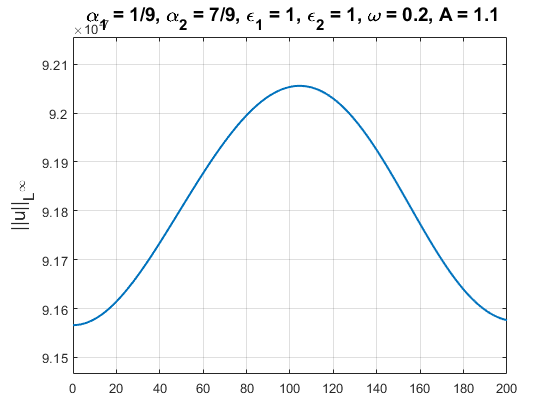}
        \end{tabular}
        \captionsetup{labelformat=empty}
        \captionof{table}{\footnotesize $\mathcal N(u)u=|u|^{\frac19}u+|u|^{\frac79}u$}
    \end{minipage}
    \captionof{figure}{\small Time evolution of NLS flow with nonlinearity $\mathcal N(u)$ indicated for the initial data $u_0 = AQ$ with $Q$ computed numerically with $\omega=0.2$ and $A$ as indicated. }
    \label{fig:alph1_1over9_alph_2_7over9}
\end{figure}

\underline{\bf Example 3.}
Our next example shows that even a small perturbation of the critical nonlinearity by a low power significantly affects the global behavior, including no longer existence of the sharp threshold for scattering vs. blow-up behavior. Here, we consider
$$
\mathcal N(u)u = 0.1|u|^{1/2}u + 0.9|u|^4u,
$$
and take initial data of the perturbed ground state $u_0 = AQ$ (refer to Figure \ref{fig:GstateComb2} for profiles of ground states in this case for other strength $\epsilon_1$ of lower nonlinearity; the reason we select this case is that the ground state mass $M[Q] \approx 4.9713$, which is lower than the mass of the critical case ground state, i.e., with a single nonlinear term $|u|^4u$, or in other words when $\epsilon_1=0$, where the ground state mass is $M[Q_{crit}] \approx 5.7357$).

In Figure \ref{fig:alph1_05_alph2_4_computed} we show cases when $A = 1, 1.05$, and $1.2$, exhibiting very different global dynamics. The parameters used here are $L = 100\pi$, $N = 2^{16}$, $dx = 0.0096$, and $dt$ varied between $0.01$ and $0.001$. 
First, one can note that even if $A>1$, we still get solutions which do not blow up ($A=1.05$ in the second row of Figure \ref{fig:alph1_05_alph2_4_computed}), thus, the ground state $Q$ in this case does not play a role of a sharp threshold, as the scaling in this problem is broken and also $M[Q_{crit}] \approx 1.15 M[Q]$. Secondly, for $A=1.2 > 1.15$ (and hence, $M[AQ]>M[Q_{crit}]$) we observe a blow-up, see the bottom row in the same picture. For a benchmark, we also include the case of $A=1$ (to confirm that it is indeed a non-scattering solution), see the top row in the same figure. Therefore, it would be interesting to investigate further influence of the combined nonlinearities on the thresholds and global behavior of solutions.

\begin{figure}[h!]
    \centering
        \begin{tabular}{cc}
        &\includegraphics[width=0.3\hsize]{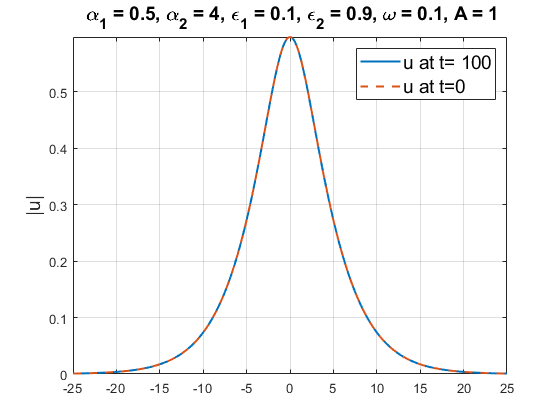}
    		\includegraphics[width=0.3\hsize]{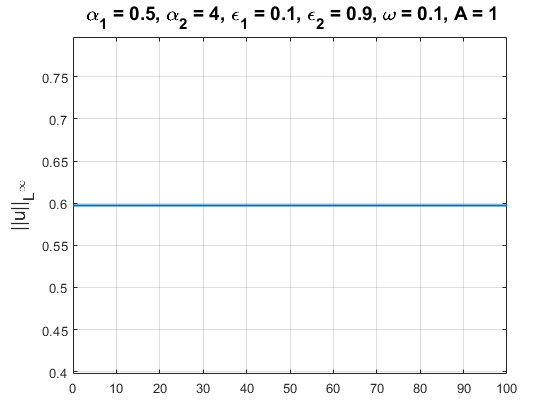}\\
        &\includegraphics[width=0.3\hsize]{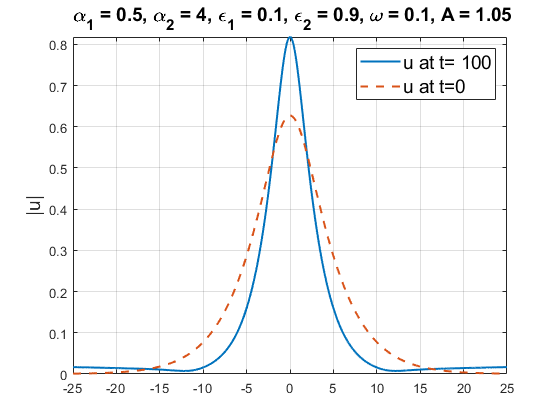}
    		\includegraphics[width=0.3\hsize]{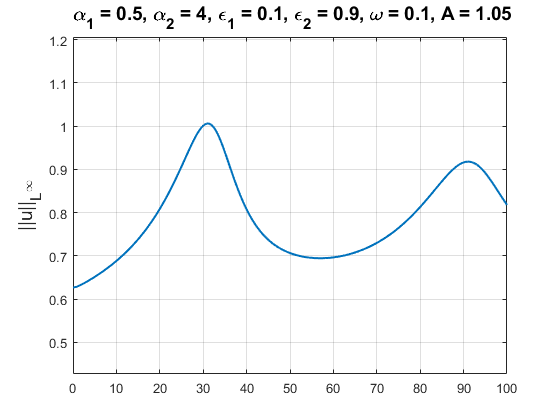}\\
        &\includegraphics[width=0.3\hsize]{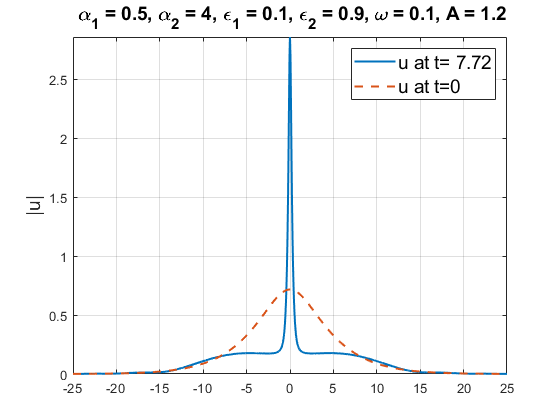}
    		\includegraphics[width=0.3\hsize]{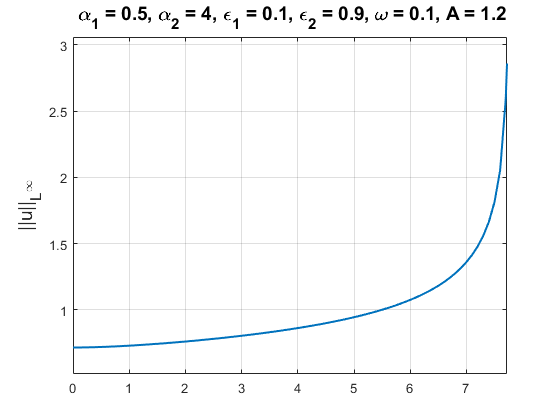}\\
        \end{tabular}
    \caption{\small  Time evolution for solutions to the NLS equation with $\mathcal N(u)u = 0.1|u|^{\frac{1}{2}}u + 0.9|u|^{4}u$ (profiles and $L^\infty$ norms) for the initial data $u_0 = AQ$ with $A=1, 1.05, 1.2$.}
    \label{fig:alph1_05_alph2_4_computed}
\end{figure}

\underline{\bf Example 4.}
For an example of double nonlinearity with integer powers we consider
$$
\mathcal N(u)u = |u|^3u \pm |u|^6u,
$$
for which the local well-posedness in $H^1$ follows, for example, from our Corollary \ref{Cor10}, and we show global behavior of initial data of the type $u_0 = AQ$
for different values of $A \sim 1$ in Figure \ref{fig:a1_3_a2_6}.
In this simulation the parameters we used $L = 100\pi$, $N = 2^{14}$, $dx = 0.0383$, and $dt$ varied between $0.01 - 0.001$. 

In the left two columns of Figure \ref{fig:a1_3_a2_6} we show dynamics of solutions for the focusing-defocusing nonlinearity $\mathcal N(u) = |u|^3 -|u|^6$ (larger defocusing nonlinearity guarantees that solutions never blow up) and on the right two columns of the same figure we show solutions behavior for the focusing-focusing nonlinearity $\mathcal N(u) = |u|^3 + |u|^6$, solutions of which can blow up in finite time, since the larger power is focusing and is supercritical. (For blow-up criteria in the combined NLS case, for example, see a recent work of the second author \cite{R2026}.)

One difference to notice in the global behavior, compared to the single (focusing) nonlinearity case, is that there is {\it no sharp threshold} in solutions behavior between global existence and scattering vs. finite time blow up (a similar phenomenon as in Example 3 above). 
Note that very close to $A=1$, solutions neither scatter nor blow-up in finite time, but rather oscillate to an asymptotically stable state (indicating that the ground state $Q$ (or its rescaling) is an asymptotically stable state). However, stepping further away from $A=1$ value, for smaller $A$ solutions scatter in both cases (see top row in Figure \ref{fig:a1_3_a2_6}) or in the case of larger focusing nonlinearity blow up in finite time (see the bottom right plot of the same figure). 

\begin{figure}[h!]
    \centering
    \begin{minipage}[t]{0.49\linewidth}
        \centering
        \begin{tabular}{cc}
           &\includegraphics[width=0.49\hsize]{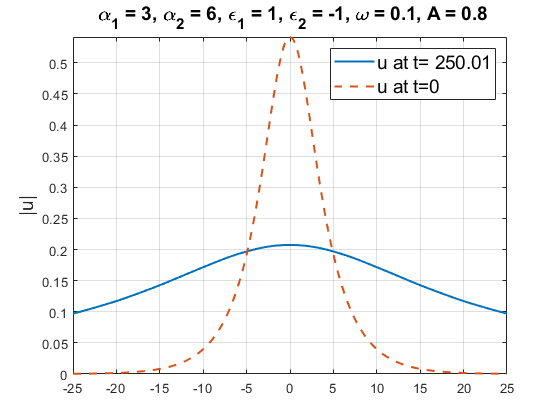} 
            \includegraphics[width=0.49\hsize]{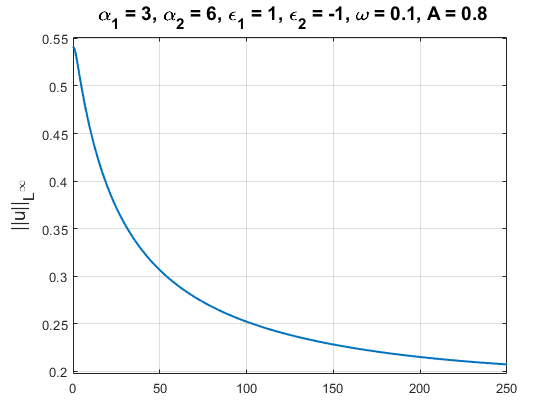}\\
            &\includegraphics[width=0.49\hsize]{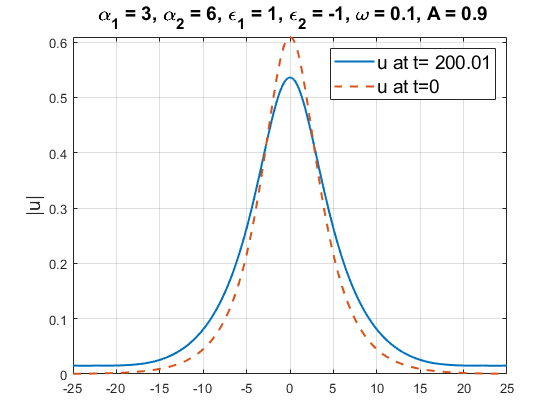} 
            \includegraphics[width=0.49\hsize]{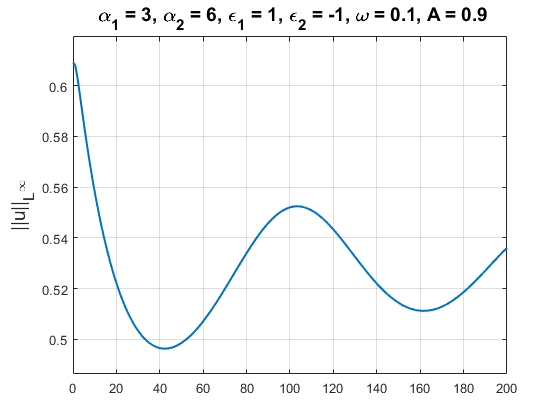}\\
            &\includegraphics[width=0.49\hsize]{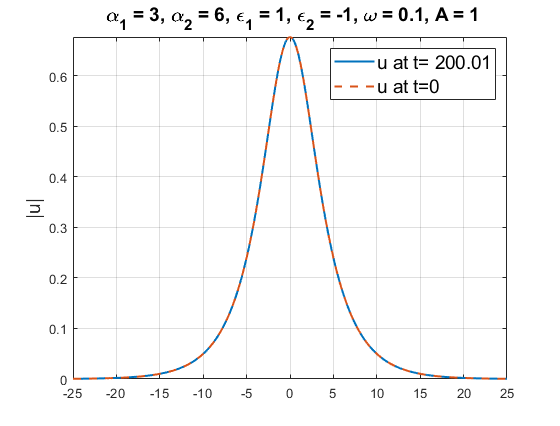} 
            \includegraphics[width=0.49\hsize]{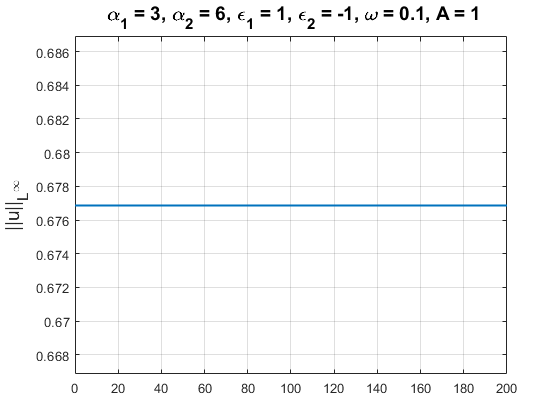}\\
            &\includegraphics[width=0.49\hsize]{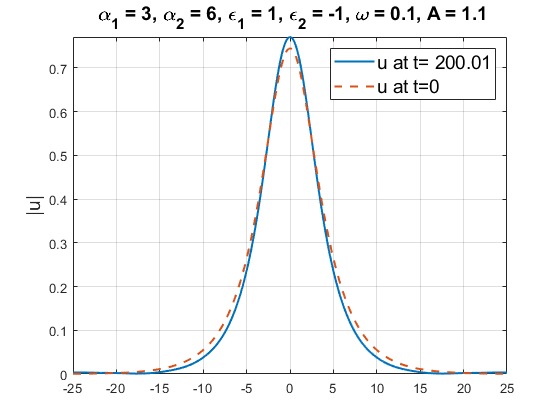} 
            \includegraphics[width=0.49\hsize]{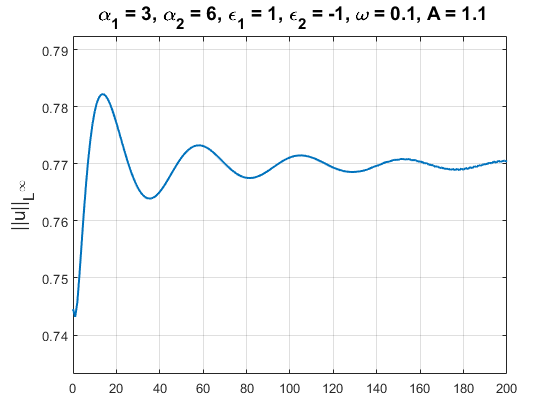}\\
            &\includegraphics[width=0.49\hsize]{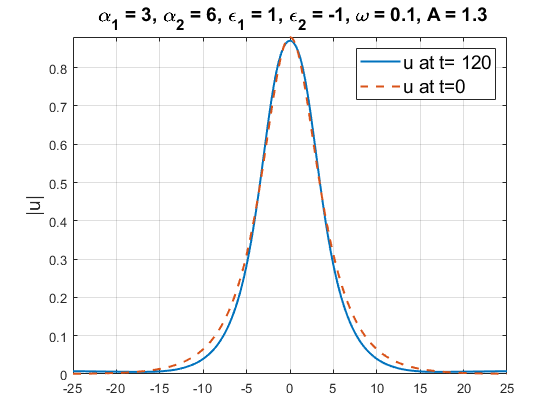} 
            \includegraphics[width=0.49\hsize]{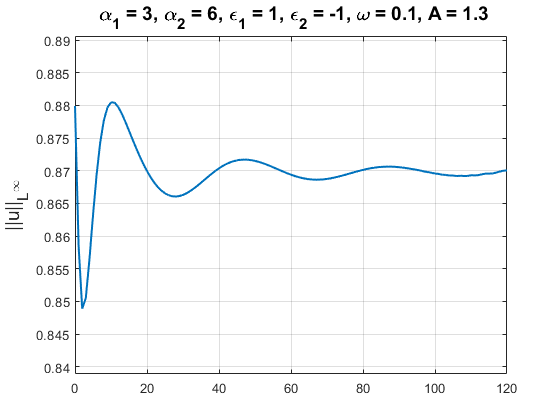}
        \end{tabular}
        \captionsetup{labelformat=empty}
        \captionof{table}{\footnotesize $\mathcal N(u)u = |u|^3u-|u|^6u$}
         \label{fig:a1_3_a2_6_part1}
        \end{minipage}
    \hspace{-0.3cm}
    \begin{minipage}[t]{0.49\linewidth}
        \centering
        \begin{tabular}{cc}
            &\includegraphics[width=0.49\hsize]{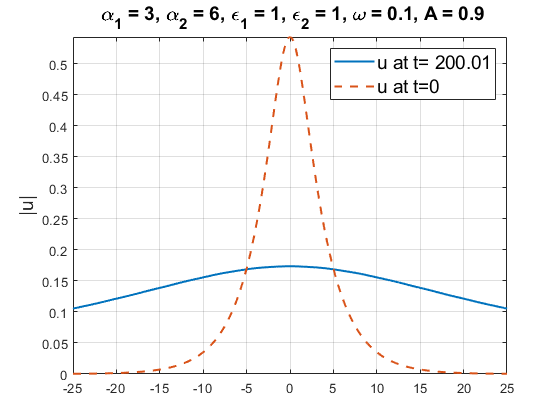} 
            \includegraphics[width=0.49\hsize]{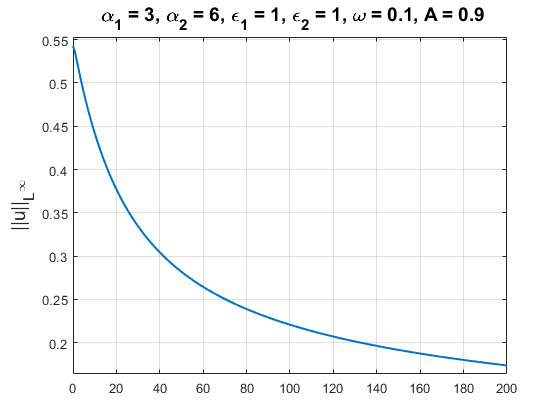}\\
            &\includegraphics[width=0.49\hsize]{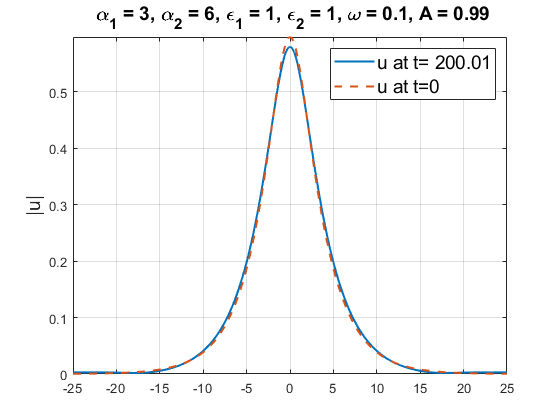} 
            \includegraphics[width=0.49\hsize]{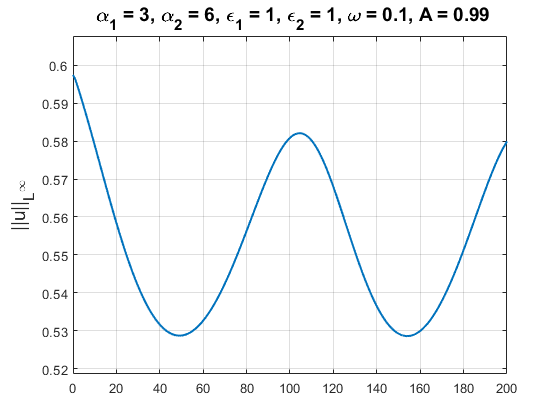}\\
            &\includegraphics[width=0.49\hsize]{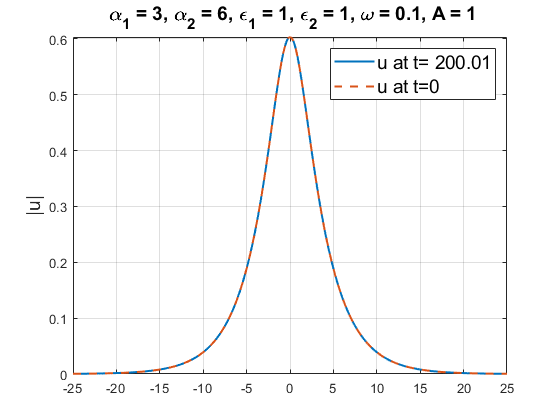} 
            \includegraphics[width=0.49\hsize]{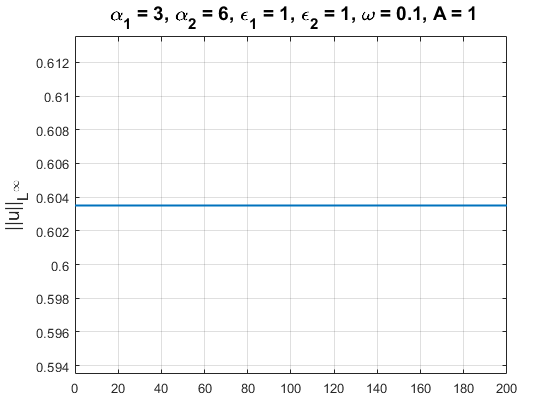}\\
            &\includegraphics[width=0.49\hsize]{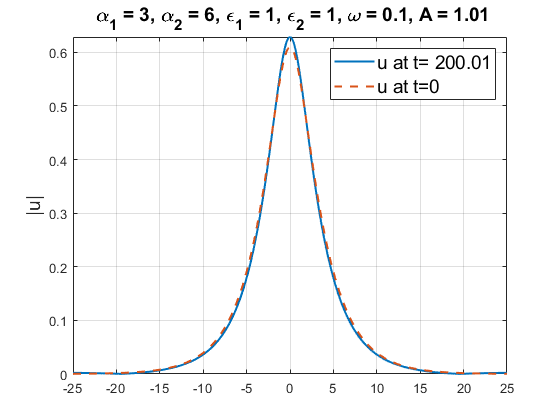} 
            \includegraphics[width=0.49\hsize]{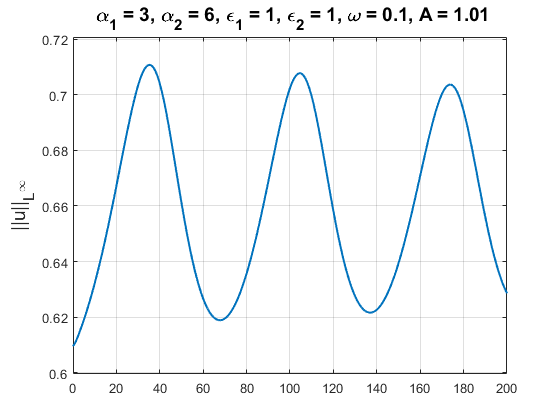}\\
            &\includegraphics[width=0.49\hsize]{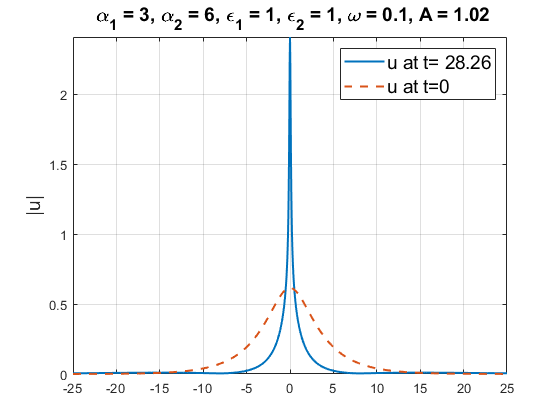} 
            \includegraphics[width=0.49\hsize]{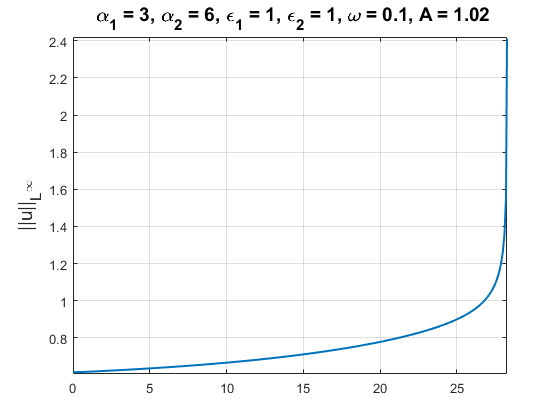}
        \end{tabular}
        \captionsetup{labelformat=empty}
        \captionof{table}{{\footnotesize
        $\mathcal N(u)u = |u|^3u + |u|^6u$ }}
    \end{minipage}
    \captionof{figure}{\small Time evolution (profiles and $L^\infty$ norms) of the NLS flow with the indicated nonlinearity $\mathcal N(u)$ and the initial data $u_0 = AQ$ with the ground state $Q$ from \eqref{cGS},  $\omega=0.1$ and amplitudes $A=0.8, 0.9, 1, 1.1, 1.3$ in the left two columns and $A=0.9, 0.99, 1, 1.01, 1.02$ in the right two columns. }
    \label{fig:a1_3_a2_6}
\end{figure}

\subsection{NLS with an exponential nonlinearity}\label{S6.3}

Finally, we consider the NLS with an exponential nonlinearity, which can also be thought of as an infinite sum of combined nonlinearities $e^{|u|^r} = \displaystyle \sum_{ k=0}^\infty\frac{|u|^{r k}}{k!}$, namely,
\begin{equation}\label{NLS-exponential}
	\begin{cases}
		&iu_{t} + \partial_{x}^{2}u + \epsilon e^{|u|^r}u = 0, \\
		&u(x,0) = u_{0}, \\
	\end{cases}
\end{equation}
where we consider $r\geq 1$, and thus, local well-posedness follows from our Corollary \ref{Cor2-exp} and global behavior for data with quadratic phase from Corollary \ref{C:gwp-exp}. We first discuss ground state solutions and then dynamics of its perturbations and also solutions with polynomial decay. 

\subsubsection{Ground states for the exponential NLS}\label{S:Gstate_exp}
Substituting $u(x,t) = e^{i\omega t}Q(x)$ into \eqref{NLS-exponential}, we obtain
\begin{equation}\label{GSTATE-exp}
    -\omega Q + Q^{\prime \prime} + \epsilon e^{|Q|^r}Q = 0.
\end{equation}
We mention that the existence of ground states for exponential nonlinearities has been investigated, for instance, in \cite{JeanjeanTanaka2003,AlvesSoutoMontenegro2012,RS2013}.
We use the equation \eqref{GSTATE-exp} to solve for a ground state (positive, smooth, vanishing at infinity) solution, computing them numerically, in a similar manner as we did earlier in this section. 

In Figure \ref{fig:ExpGstateComb} we show ground state solutions of \eqref{GSTATE-exp} with $r=1$ and $\omega=0.1$ with the parameter $\epsilon$ varying from $0.025$ to $0.09$. 
For comparison, we also plot the ground state solutions for $r=2$, i.e., 
\begin{equation}\label{GSTATE-exp^2}
    -\omega Q + Q^{\prime \prime} + \epsilon e^{|Q|^2}Q = 0,
\end{equation}
and the same $\omega=0.1$ 
(parameters used here are $L = 40\pi$, $N = 2^{16}$, $dx = 0.0038$).
\begin{figure}[ht!]
\includegraphics[width=0.48\hsize,height=0.34\hsize]{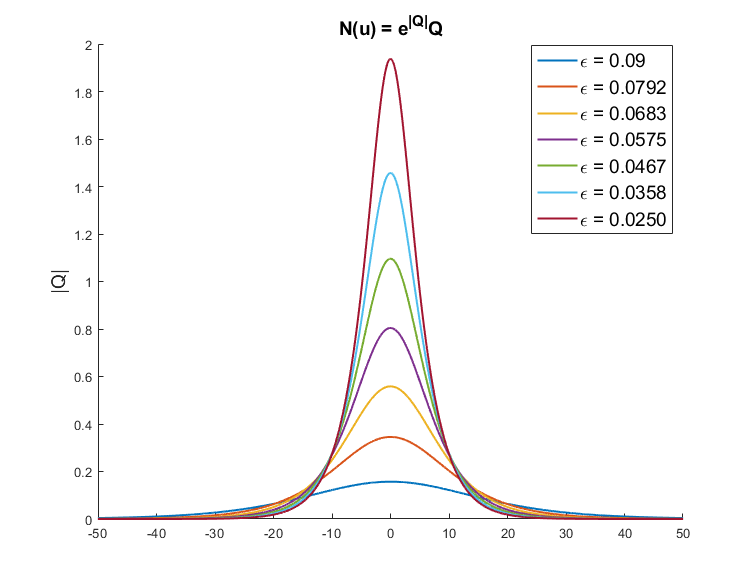}
    \includegraphics[width=0.49\hsize, height=0.34\hsize]{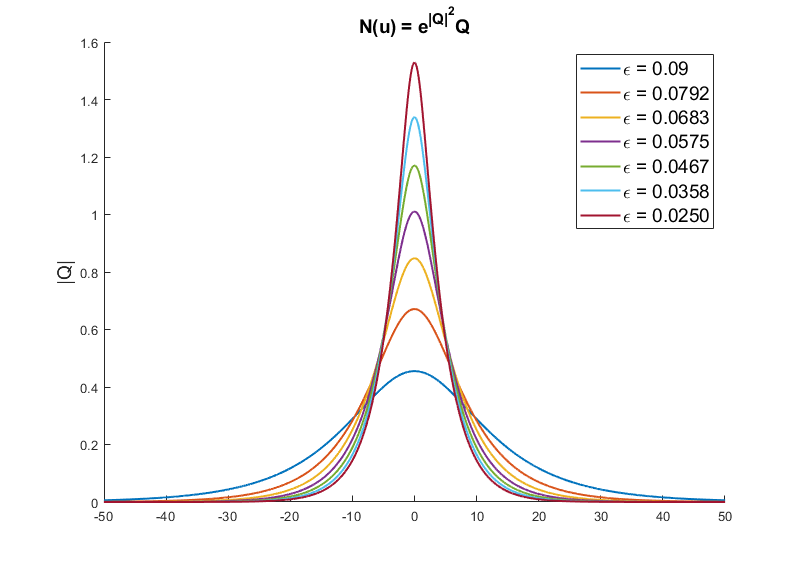}
	\caption{\small Ground state solutions to the NLS with exponential nonlinearity. Left: ground states of \eqref{GSTATE-exp} with $\omega = 0.1$ and different values of $\epsilon$. Right: ground states of \eqref{GSTATE-exp^2} for $\omega = 0.1$ and different values of $\epsilon$.
  	\label{fig:ExpGstateComb}}
\end{figure}

\subsubsection{Time evolution of solutions to the NLS with exponential nonlinearity.} 
We consider the NLS equation \eqref{NLS-exponential} and simulate its dynamics for several cases.
\smallskip

\underline{\bf Example 1.} We fix $\epsilon = 0.5$ and study the time evolution of the ground state and its perturbations, that is, we take $u_0= AQ$ with $Q$ from either \eqref{GSTATE-exp} or \eqref{GSTATE-exp^2} and $A =0.9, 0.99, 1, 1.1, 1.2$. 
\smallskip

In Figure \ref{fig:Exp0025_Exp005} the profiles and $L^\infty$ norm are shown for different values of $A$ (parameters used here are $L = 40\pi$, $N = 2^{14}$, $dx = 0.0153$, $dt = 0.001$).
Observe that similar to the double combined NLS equation in this case of the NLS with an exponential nonlinearity we do not observe any sharp threshold for global solutions behavior. Note that very close to $A=1$ (with $A=0.99$ and $1.1$) the solutions oscillate around some state, and only for much smaller than 1 or larger than 1 values of $A$, the initial condition $u_0 = AQ$ produces solutions that scatter for $A \ll 1$ or blow up in finite time for $A\gg 1$, see numerical confirmations of that in Figure \ref{fig:Exp0025_Exp005} (for the coefficient $\epsilon =0.025$). We also performed simulations with other $\epsilon$ coefficients as well as different $r \geq 1$ values, and found similar dynamics.
\begin{figure}[h!]
    \centering
        \begin{tabular}{cc}
        &\includegraphics[width=0.3\hsize]{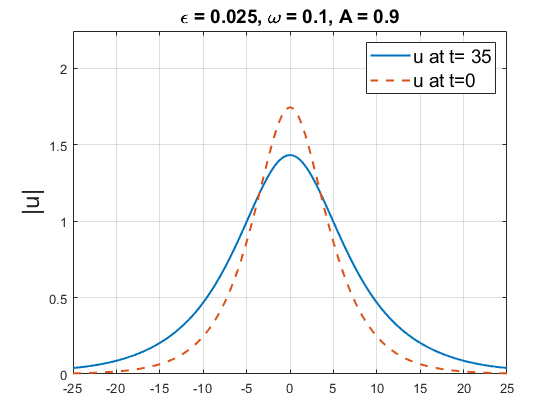}
    		\includegraphics[width=0.3\hsize]{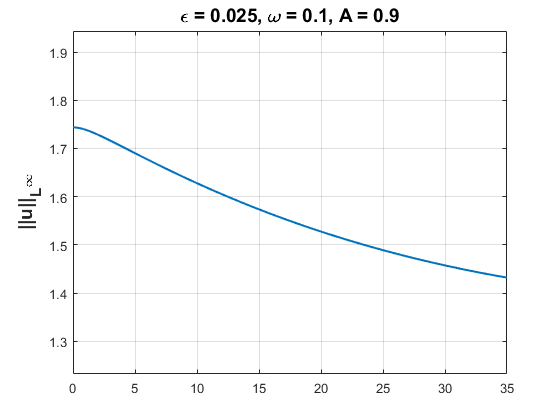}\\
        &\includegraphics[width=0.3\hsize]{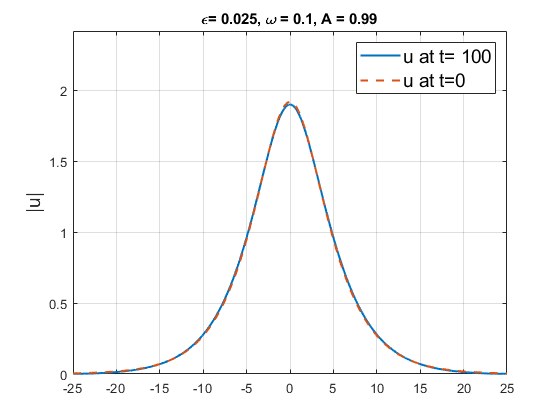}
    		\includegraphics[width=0.3\hsize]{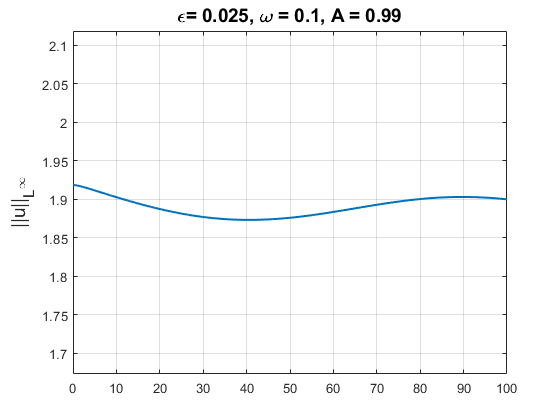}\\
    		  &\includegraphics[width=0.3\hsize]{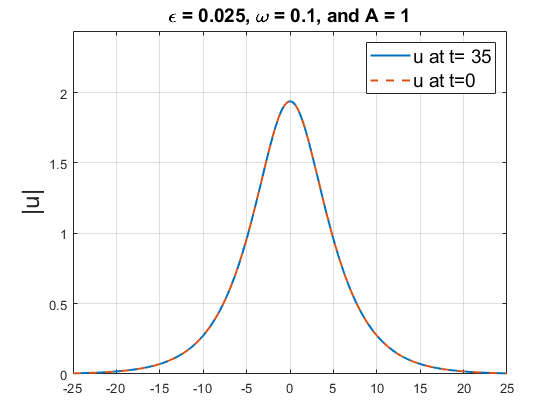}
    		  \includegraphics[width=0.3\hsize]{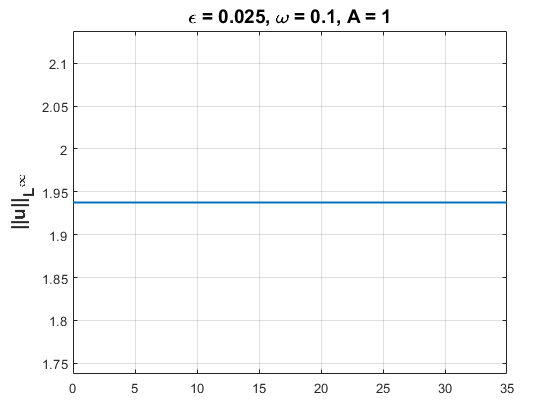} \\
        &\includegraphics[width=0.3\hsize]{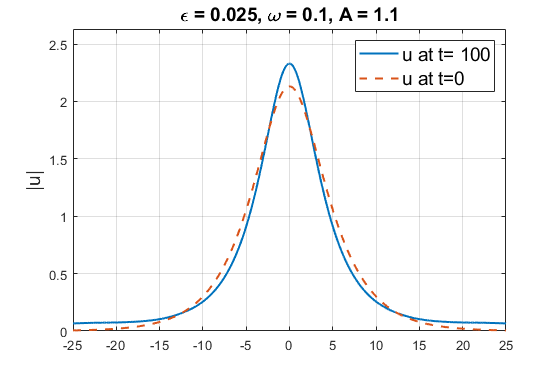}
    		\includegraphics[width=0.3\hsize]{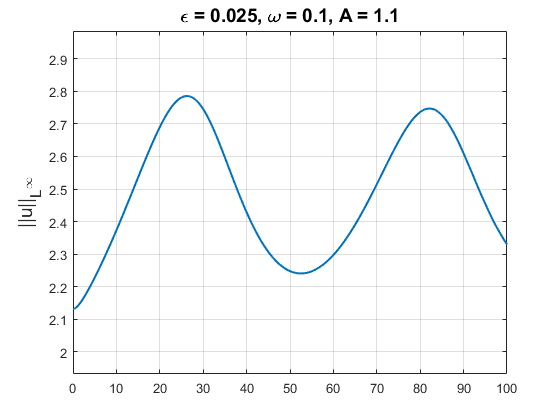}\\
    		  &\includegraphics[width=0.3\hsize]{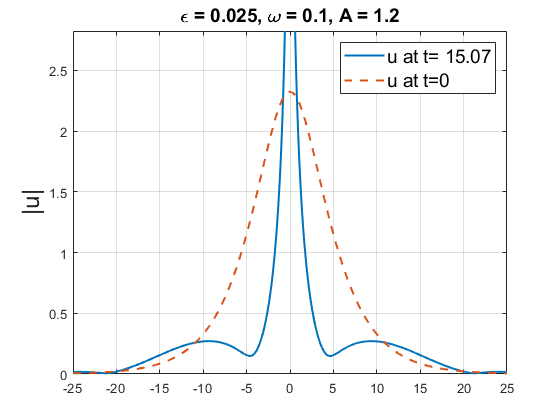}
    		  \includegraphics[width=0.3\hsize]{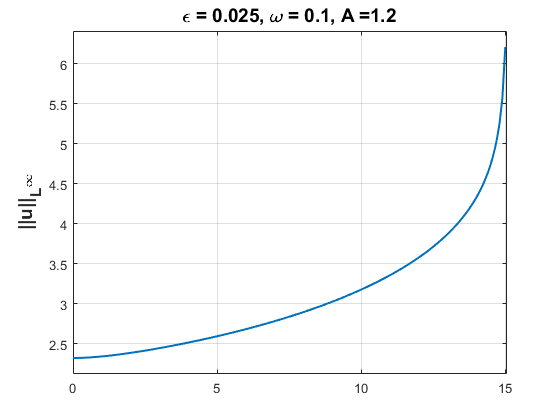}
        \end{tabular}
    \caption{\small  Time evolution for solutions to the NLS equation with $\mathcal N(u)u = 0.025e^{|u|}u$ (profiles and $L^\infty$ norms) for the initial data $u_0 = AQ$ with the ground state $Q$ from \eqref{GSTATE-exp} computed numerically.}
    \label{fig:Exp0025_Exp005}
\end{figure}

\clearpage

\underline{\bf Example 2. }
To confirm the findings of Theorem \ref{scatres} on global existence and scattering with a quadratic phase, we consider the polynomially decaying initial data $P(x)$, of the form $P(x) = \frac{A}{(1 +x^2)^n}$ with quadratic phase, and study the time evolution of the following data
\begin{equation}
    \label{initDataPseudo}
    u_0(x) =  e^{i\frac{bx^2}{4}}P(x), \qquad b \in \mathbb{R}.
\end{equation}

Recall that Theorem \ref{scatres} proves that for large enough $b > 0$, solutions of \eqref{initDataPseudo} exist globally and scatter; we confirm this numerically, see Figure \ref{fig:Exp_Poly_Overlap_and_Bup}. Furthermore, we observe that for {\it any positive} $b>0$ solutions scatter, see Figure \ref{fig:Exp_Poly_Overlap_and_Bup} (parameters used here are: $L = 10\pi$, $N = 2^{14}$, $dx = 0.0038$, $dt = 0.00001$).
\begin{figure}[h!]
	\begin{tabular}{cc}
        \includegraphics[width=0.45\hsize,height=0.35\hsize]{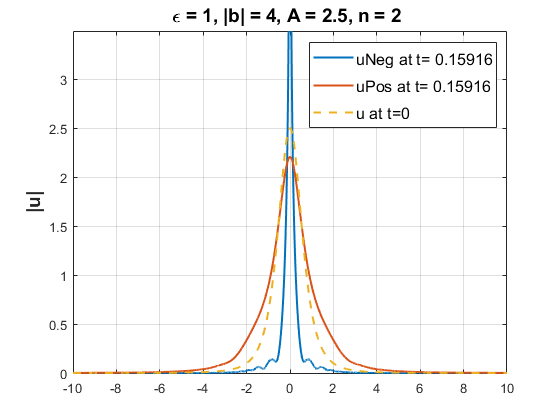}
    	   &\includegraphics[width=0.45\hsize,height=0.35\hsize]{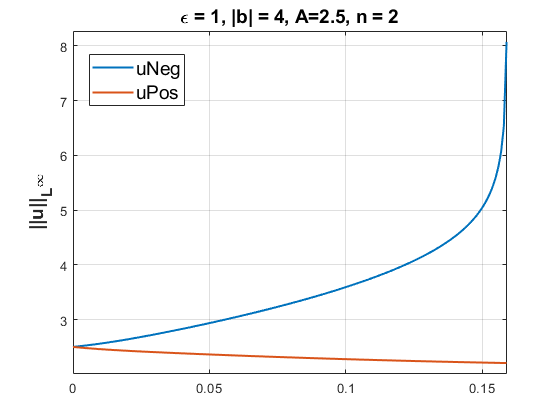}
	\end{tabular}
	\caption{\small Profiles and time evolution of the $L^\infty$ norm for the NLS \eqref{NLS-exponential} with exponential nonlinearity, $r=1$, $\epsilon=1$, and initial data with a quadratic phase \eqref{initDataPseudo}, $b$ = $\pm 4$ and $P(x) = \frac{A}{(1+x^2)^n}$, $A=2.5, n=2$. 
     }
	\label{fig:Exp_Poly_Overlap_and_Bup}
\end{figure}

We next note that if we consider $b < 0$ in \eqref{initDataPseudo}, the solutions start growing rapidly suggesting blow up in finite time.
We use data similar to \eqref{initDataPseudo} with $P = \frac{A}{(1+x^2)^n}$, $n>1/2$, and investigate the threshold via $b$ values in global existence vs. blow-up in finite time. 
For a visualization of the case with $n=2$, $A=2.5$, and $b=\pm 4$, 
see Figure \ref{fig:Exp_Poly_Overlap_and_Bup}.    
\smallskip

\underline{\bf Example 3.}
We investigate quadratic phase behavior with the ground state, namely, instead of a polynomial decay in $P(x)$ in \eqref{initDataPseudo}, we use the ground state data with a quadratic phase
$$
u_0 = e^{i\frac{bx^2}4} Q(x),
$$
where the ground state $Q$ from \eqref{GSTATE-exp} is numerically computed.
We show the profiles and time evolution of the $L^\infty$ norm in 
Figure \ref{fig:Exp_Overlap_and_Bup}, where the red lines show profile (left) and $L^\infty$ norm (right) of scattering solutions, thus, confirming our conjecture about scattering for smaller positive values of $b$, thus, confirming the Conjecture \ref{Conj} for $b > 0$. We also investigate the behavior of solutions for $b < 0$, where in this case the solution dynamics indicates a  blow-up in finite time, see blue lines in Figures \ref{fig:Exp_Overlap_and_Bup}. 
It is especially plausible confirmation for $b < 0$ in Figure \ref{fig:Exp_Overlap_and_Bup} as the $L^\infty$ norm grows quickly in time indicating the blow up behavior. 

\begin{figure}[h!]
	\begin{tabular}{cc}
          \includegraphics[width=0.45\hsize,height=0.35\hsize]{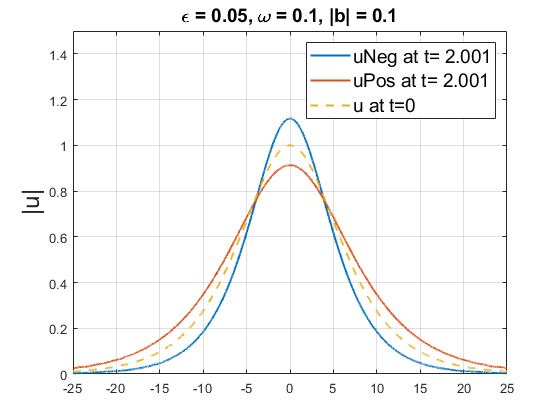}
		  &\includegraphics[width=0.45\hsize,height=0.35\hsize]{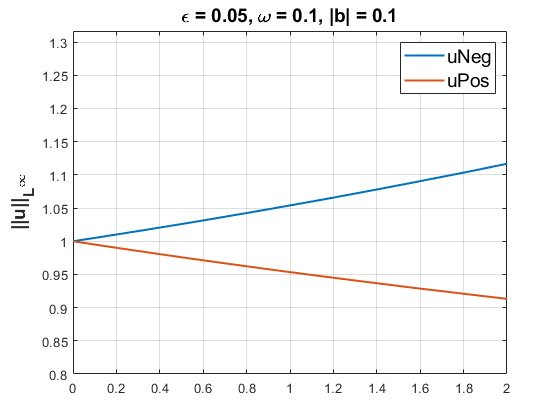}\\
          \includegraphics[width=0.45\hsize,height=0.35\hsize]{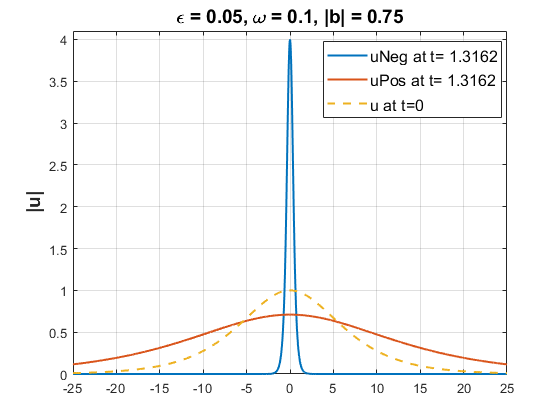}
		  &\includegraphics[width=0.45\hsize,height=0.35\hsize]{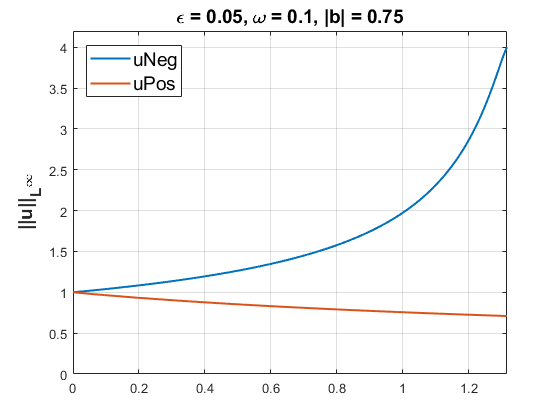}
	\end{tabular}
	\caption{\small Profiles and time evolution of the $L^\infty$ norm for the NLS \eqref{NLS-exponential} with an exponential nonlinearity and initial data $u_0 = e^{i \frac{bx^2}4}Q(x)$. Here we consider a quadratic phase $b$ = $\pm$0.1 (top) and $b$ = $\pm$0.75 (bottom).}
	\label{fig:Exp_Overlap_and_Bup}
\end{figure}

We have therefore given positive confirmation to the Conjecture \ref{Conj} that solutions with initial data \eqref{initDataPseudo} exist globally and scatter for $b>0$ and blow up in finite time for $b<0$.
\medskip

We conclude with mentioning that simulations which suggest existence of blow-up solutions (Figures \ref{fig:a1_3_a2_6}, \ref{fig:Exp0025_Exp005}, \ref{fig:Exp_Poly_Overlap_and_Bup} and \ref{fig:Exp_Overlap_and_Bup}) support the recent results in \cite{R2026}, where the second author studied various blow-up criteria for these equations for both positive and negative energy. There, various examples are also provided that satisfy their respective blow-up criterion.

\newpage
\bigskip

\bigskip

{\bf Conflict of Interest:} The authors declare that they have no
conflicts of interest.

\bigskip

{\bf ORCID}

{\it Oscar Ria\~no}
\qquad {https://orcid.org/0000-0002-6325-8848}

{\it Alex D. Rodriguez}
\qquad {https://orcid.org/0000-0003-4382-7430}

{\it Svetlana Roudenko}
\qquad {https://orcid.org/0000-0002-7407-7639}

\bigskip

\bibliographystyle{abbrv}

\bibliography{references}

\end{document}